\documentclass[11 pt]{amsart}
\usepackage{amscd,amsfonts,amssymb,amsmath}
\usepackage{hyperref}
\usepackage{epsfig}
\usepackage{mathtools}
\usepackage{tabu}
\usepackage{tikz-cd}
\usepackage{blindtext}
\usepackage{ulem}
\usepackage{multicol}

\usepackage{enumitem}
\usepackage{physics}
\newtheorem{theorem}{Theorem}[section]
\newtheorem{corollary}[theorem]{Corollary}
\newtheorem{lemma}[theorem]{Lemma}
\newtheorem{proposition}[theorem]{Proposition}
\theoremstyle{definition}
\newtheorem{conjecture}[theorem]{Conjecture}

\newtheorem{remark}[theorem]{Remark}
\numberwithin{equation}{subsection}
\newtheorem*{ack}{Acknowledgement}
\newtheorem*{thm}{Theorem}
\usepackage[all,cmtip]{xy}

\usepackage{graphicx} 

\newcommand{\Aut}{\operatorname{Aut}}

\newcommand{\Inn}{\operatorname{Inn}}
\newcommand{\Out}{\operatorname{Out}}

\newcommand{\C}{\operatorname{C}}
\newcommand{\M}{\operatorname{M}}

\newcommand{\Z}{\operatorname{Z}}

\newcommand{\id}{\mathrm{id}}

\setlength\oddsidemargin{.02mm}
\setlength\evensidemargin{.02mm}
\setlength\textheight{20.5cm}
\setlength\textwidth{17cm}
\setlength\parindent{0pt}
\begin{document}

\title{Virtual planar braid groups and permutations}

\author{Tushar Kanta Naik}
\address{School of Mathematical Sciences, National Institute of Science Education and Research, Bhubaneswar, HBNI, P.O. Jatni, Khurda, Odisha 752050, India.}
\email{tushar@niser.ac.in}

\author{Neha Nanda}
\address{Department of Mathematical Sciences, Indian Institute of Science Education and Research (IISER) Bhopal, Bhopal Bypass Road, Bhauri, Bhopal 462066, Madhya Pradesh, India.}
\email{nehananda94@gmail.com}
\author{Mahender Singh}
\address{Department of Mathematical Sciences, Indian Institute of Science Education and Research (IISER) Mohali, Sector 81,  S. A. S. Nagar, P. O. Manauli, Punjab 140306, India.}
\email{mahender@iisermohali.ac.in}

\subjclass[2020]{Primary 57K12; Secondary 57K20, 20E36}
\keywords{Braid group, pure twin group, pure virtual twin group, Reidemeister-Schreier method, right-angled Artin group,  right-angled Coxeter group, twin group, virtual twin group}

\begin{abstract}
Twin groups and virtual twin groups are planar analogues of braid groups and virtual braid groups, respectively. These groups play the role of braid groups in the Alexander-Markov correspondence for the theory of stable isotopy classes of immersed circles on orientable surfaces. Motivated by the general idea of Artin and a recent work of Bellingeri and Paris \cite{BellingeriParis2020}, we obtain a complete description of homomorphisms between virtual twin groups and symmetric groups, which as an application gives us the precise structure of the automorphism group of the virtual twin group $VT_n$  on $n \ge 2$ strands. This is achieved by showing the existence of an irreducible right-angled Coxeter group $KT_n$ inside $VT_n$. As a by-product, it also follows that the twin group $T_n$ embeds inside the virtual twin group $VT_n$, which is an analogue of a similar result for braid groups.
\end{abstract}

\maketitle

\section{Introduction}
Doodles on a 2-sphere first appeared in the work \cite{FennTaylor} of Fenn and Taylor as finite collections of simple closed curves on a 2-sphere without triple or higher intersections. Allowing self intersections of curves, Khovanov \cite{Khovanov} extended the idea to finite collections of closed curves without triple or higher intersections on a closed oriented surface. Khovanov also introduced an analogue of the link group for doodles and constructed several infinite families of doodles whose fundamental groups have infinite centre. Recently, Bartholomew-Fenn-Kamada-Kamada \cite{BartholomewFennKamada2018, BartholomewFennKamada2018-2} extended the study of doodles to immersed circles without triple or higher intersection points on closed oriented surfaces, which can be thought of as a planar analogue of virtual knot theory with the sphere case corresponding to classical knot theory.  It is a natural problem to look for invariants for these topological objects. In \cite{BartholomewFennKamada2019}, coloring of diagrams using a special type of algebra has been used to construct an invariant for virtual doodles. Further, an Alexander type invariant for oriented doodles which vanishes on unlinked doodles with more than one component has been constructed in a recent work \cite{CisnerosFloresJuyumayaMarquez}.
\par

In tandem with classical knot theory, the study of doodles on surfaces is structured around a suitable group theory framework. The role of groups for doodles on a 2-sphere is played by a class of right-angled Coxeter groups called {\it twin groups} (also called {\it planar braid groups}), which first appeared in the work of Shabat and Voevodsky \cite{ShabatVoevodsky}. Twin groups have been brought to attention by Khovanov \cite{Khovanov} who gave a topological interpretation of these groups. For each $n \ge 2$, the twin group $T_n$ is the set of homotopy classes of configurations of $n$ arcs in the infinite strip $\mathbb{R} \times  [0,1]$ connecting fixed $n$ marked points on each of the parallel boundary lines such that each arc is monotonic and no three arcs have a point in common. The group structure on $T_n$ is given by the natural stacking operation. Taking the one point compactification of the plane, one can define the closure of a twin on a $2$-sphere analogous to the closure of a geometric braid in the 3-space. While Khovanov proved that every oriented doodle on a $2$-sphere is closure of a twin, an analogue of Markov Theorem for doodles on a 2-sphere is known due to Gotin \cite{Gotin}. A recent work \cite{NandaSingh2020} by Nanda and Singh established Alexander and Markov theorems for the virtual case. It is proved that a new class of groups called virtual twin groups, introduced in \cite{BarVesSin} and denoted by $VT_n$, plays the role of groups in the theory of virtual doodles. These correspondences can be summarised as
$$\bigcup_{n \ge 2} T_n/_{\textrm{Markov equivalence}}\quad  \longleftrightarrow \quad\textrm{Homotopy classes of doodles on 2-sphere}$$
and
$$\bigcup_{n \ge 2} VT_n/_{\textrm{Markov equivalence}} \quad\longleftrightarrow \quad\textrm{Stable equivalence classes of doodles on surfaces}.$$
\par

Analogues of pure braid groups and pure virtual braid groups can be defined for twin groups and virtual twin groups as well. The pure twin group $PT_n$ is defined as the kernel of the natural surjection from $T_n$ onto the symmetric group $S_n$. The structure of $PT_n$ is completely known for small number of strands. Bardakov, Singh and Vesnin \cite{BarVesSin} proved that $PT_n$ is free for $n = 3,4$ and not free for $n \geq 6$. Gonz\'alez, Le\'on-Medina and Roque \cite{GonGutiRoq} showed that $PT_5$ is a free group of rank $31$. A precise description of $PT_6$ has been obtained by Mostovoy and Roque-M\'arquez \cite{MostRoq} who proved that $PT_6 \cong F_{71} *_{20} \big(\mathbb{Z} \oplus \mathbb{Z} \big)$. Recently, a minimal presentation of $PT_n$ for all $n$ has been announced by Mostovoy \cite{Mostovoy}. Automorphisms, (twisted) conjugacy classes and centralisers of involutions in twin groups have been explored in recent works of the authors \cite{NaikNandaSingh1, NaikNandaSingh2}.  In a recent preprint \cite{Farley2021}, Farley has shown that $PT_n$ is always a diagram group, in the sense of Guba and Sapir. It is worth noting that (pure) twin groups are also used by physicists in the study of three-body interactions and topological exchange statistics in one dimension  \cite{HarshmanKnapp, HarshmanKnapp2021}. The pure virtual twin group $PVT_n$ is defined analogously as the kernel of the natural surjection from $VT_n$  onto  $S_n$. A precise presentation of $PVT_n$ has been obtained in a recent work \cite{NaikNandaSingh2020} of the authors, where it has been shown to be an irreducible right-angled Artin group. Further, a complete description of automorphism group of $PVT_n$ has been given. 
\par
The present paper contributes to our understanding of virtual twin groups and is motivated by the recent work \cite{BellingeriParis2020} of Bellingeri and Paris on virtual braid groups. We show that there exists an irreducible right-angled Coxeter group $KT_n$ inside the virtual twin group $VT_n$ and that $KT_n$ contains $T_n$. As a consequence, it follows that the twin group $T_n$ embeds inside the virtual twin group $VT_n$, which is an analogue of a similar but non-obvious result on embedding of braid groups inside virtual braid groups  \cite{MR1410467, Gaudreau2020, Kamada, MR1997331}. The group $KT_n$ is further used to obtain a complete description of homomorphisms between virtual twin groups and symmetric groups. It is worth pointing out that the study of homomorphisms from braid groups to symmetric groups goes back to Artin \cite{Artin}, which was later used by Dyer and Grossman \cite{DyerGrossman} to determine the automorphism groups of braid groups. The paper \cite{BellingeriParis2020} and our paper follows this general idea, although the techniques involved are quite different.
\medskip

We begin by recalling the definition and the topological interpretation of virtual twin groups in Section \ref{section-prelim}.  In Section \ref{section-presentation-pvtn}, we give a presentation of $KT_n$ showing that it is an irreducible right-angled Coxeter group. More precisely, we prove the following result (Theorem \ref{Ktn-right-angled-Coxeter}).

\begin{thm}
{\it For each $n \ge 2 $, the group $KT_n$  is generated by $\mathcal{S}= \big\{ \alpha_{i, j} ~|~  1 \leq i \neq j \leq n \big\}$, where $\alpha_{i, i+1}= s_i$ and  $\alpha_{i+1, i}= \rho_i s_i \rho_i$. Further, 
the defining relations are the following:
\begin{itemize}
\item[(1)] $\alpha_{i,j}^2 = 1$  for all $1\leq i \neq j \leq n,$
\item[(2)] $\alpha_{i,j} \alpha_{k,l} =  \alpha_{k,l} \alpha_{i,j}  \text{ for distinct integers } i, j, k, l$.
\end{itemize}}
\end{thm}

In Section \ref{homo from vtn to sn}, we give a complete description of homomorphisms from $VT_n$ to $S_m$ (Theorem \ref{Homomorphisms-VTn-Sm}). Section \ref{homo from sn to vtn} contains many technical results and occupies the main chunk of this paper. The main result of this section gives a complete description of homomorphisms from $S_n$ to $VT_m$ (Theorem \ref{Homomorphisms-Sn-VTn}). Finally, in Section \ref{Homomorphisms-VTn-VTm}, building upon the preceding sections, we give a complete description of homomorphisms from  $VT_n$ to $VT_m$. To be more specific, we establish the following result (Theorem \ref{homomorphisms-VTn-VTm}).

\begin{thm}
{\it Let $n, m$ be integers such that $n \geq m$, $n \geq 5$ and $m \geq 2$. Let $\phi : VT_n \to VT_m$ be a homomorphism. Then, upto conjugation of homomorphisms, one of the following  assertions holds:
\begin{enumerate}
\item The image of $\phi$ is abelian,
\item $n=m$ and $\phi \in \{\lambda  \pi,~ \lambda  \theta, ~\phi_m, ~ \zeta  \phi_m, ~\text{where}~ m \in \mathbb{Z} \}$,
\item $n=m=6$ and $\phi \in \{ \lambda  \nu  \theta, \lambda  \nu  \pi \}$.
\end{enumerate}}
\end{thm}

As a consequence, we obtain the structure of the automorphism group of $VT_n$ (Theorem  \ref{aut out vtn}) and prove that $\Aut(VT_n) \cong VT_n \rtimes \mathbb{Z}_2$ for $n \ge 5$. As an application, we deduce that $VT_n$ is not co-Hopfian for  $n \ge2$ (Corollary \ref{vtn not cohopfian}). We conclude the paper by tabulating status of some structural properties of braid groups, virtual braid groups, twin groups, virtual twin groups and their pure subgroups.
\medskip

\section{Preliminaries}\label{section-prelim}
Consider the group $VT_n$ with generators $\{ s_1, s_2, \ldots, s_{n-1}, \rho_1, \rho_2, \ldots, \rho_{n-1}\}$ and defining relations
\begin{eqnarray}
s_i^{2} &=&1 \quad \textrm{for} \quad i = 1, 2, \dots, n-1, \label{1}\\ 
s_is_j &=& s_js_i \quad\textrm{for} \quad |i - j| \geq 2,\label{2}\\
\rho_i^{2} &=& 1 \quad \textrm{for}\quad i = 1, 2, \dots, n-1, \label{3}\\
\rho_i\rho_j &=& \rho_j\rho_i \quad \textrm{for} \quad|i - j| \geq 2, \label{4}\\
\rho_i\rho_{i+1}\rho_i &=& \rho_{i+1}\rho_i\rho_{i+1} \quad \textrm{for}\quad i = 1, 2, \dots, n-2, \label{5}\\
\rho_is_j &=& s_j\rho_i \quad \textrm{for} \quad |i - j| \geq 2, \label{6}\\
\rho_i\rho_{i+1} s_i &=& s_{i+1} \rho_i \rho_{i+1} \quad \textrm{for} \quad i = 1, 2, \dots, n-2. \label{7}
\end{eqnarray}

Elements of the group $VT_n$ can be topologically interpreted as follows \cite{NandaSingh2020}.  Consider a subset $D$ of $\mathbb{R} \times [0,1]$ consisting of $n$ intervals called {\it strands} with $\partial (D) = Q_n  \times \{0,1\}$, where $Q_n$ is a  fixed set of $n$ points in $\mathbb{R}$. The set $D$ is called a virtual twin diagram on $n$ strands if it satisfies the following conditions.

\begin{enumerate}
\item[(1)]  Every strand is monotonic, more precisely, each strand maps homeomorphicaly onto the unit interval $[0,1]$ by the natural projection $\mathbb{R} \times [0,1] \to [0,1]$.
\item[(2)] The set $V(D)$ of all crossings of the diagram $D$ consists of transverse double points of $D$, where each crossing has the pre-assigned information of being a real or a virtual crossing as depicted in Figure \ref{Crossings}. A virtual crossing is depicted by a crossing encircled with a small circle.
\end{enumerate}

\begin{figure}[hbtp]
\centering
\includegraphics[scale=0.4]{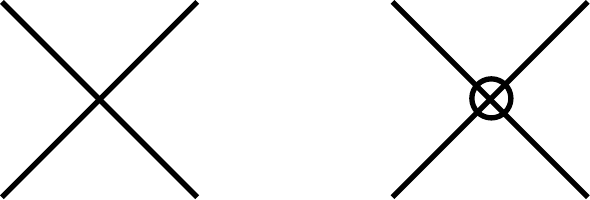}
\caption{Real and virtual crossings}
\label{Crossings}
\end{figure}

We say that the two virtual twin diagrams on $n$ strands are \textit{equivalent} if one can be obtained from the other by a finite sequence of isotopies of the plane and the moves as in Figure \ref{Reidemeister moves for virtual twin diagrams}. Such an equivalence class is called a \textit{virtual twin}. It turns out that $VT_n$ is isomorphic to the group of virtual twins on $n$ strands with the operation of concatenation \cite[Proposition 3.3]{NandaSingh2020}. The generators $s_i$ and $\rho_i$ of $VT_n$ can be represented by configurations shown in Figure \ref{generator-vtn}.

\begin{figure}[hbtp]
\centering
\includegraphics[scale=0.2]{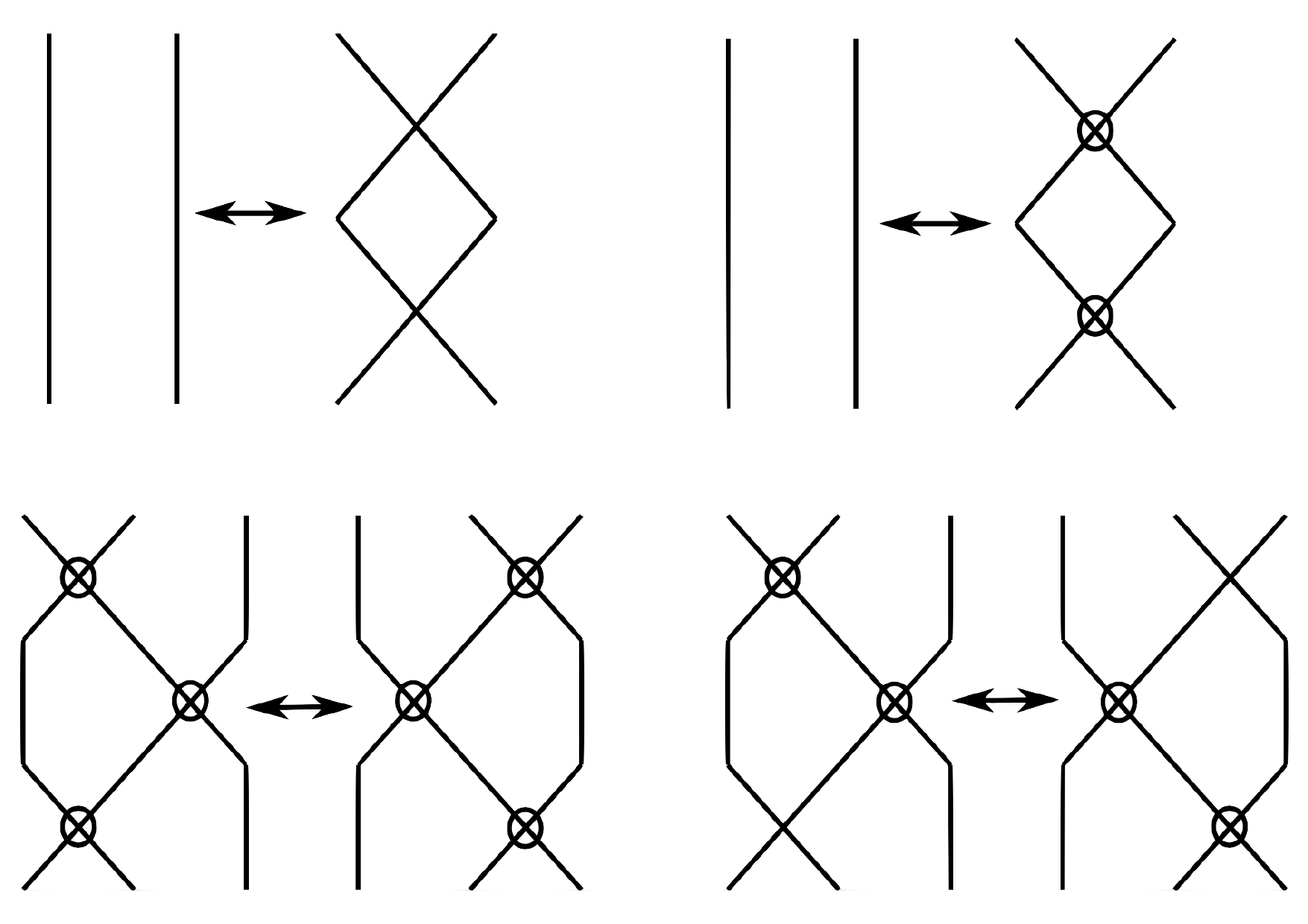}
\caption{Reidemeister moves for virtual twin diagrams}
\label{Reidemeister moves for virtual twin diagrams}
\end{figure}

\begin{figure}[hbtp]
\centering
\includegraphics[scale=0.25]{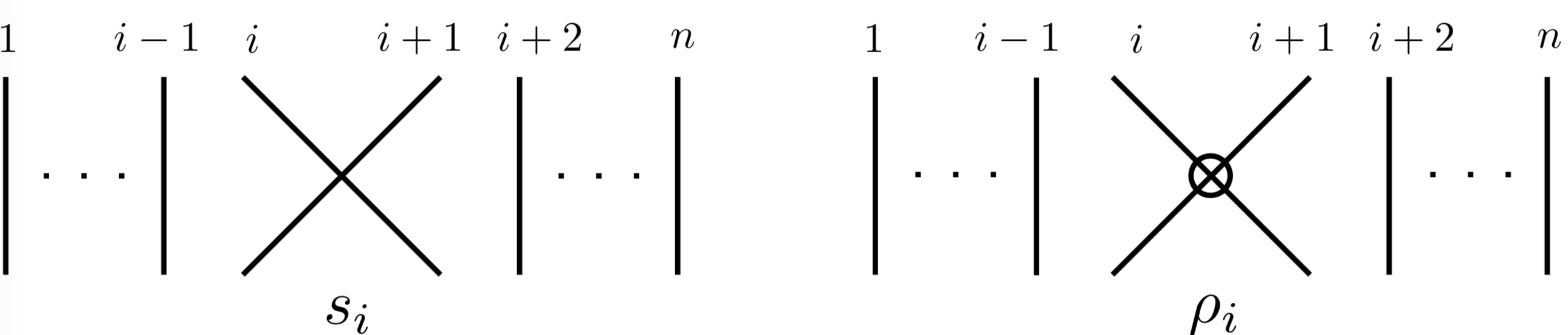}
\caption{Generator $s_i$ and $\rho_i$}
\label{generator-vtn}
\end{figure}

Let $\tau_i$ denote the transposition $(i, i+1)$. The symmetric group $S_n$ on $n$ symbols is generated by $\tau_1,\tau_2, \ldots, \tau_{n-1}$. There is a natural surjective homomorphism $\pi:VT_n  \to S_n$ given by $$\pi(s_i) = \pi(\rho_i) = \tau_i$$ for all $i$. The kernel $PVT_n$ of this surjection is called the \textit{pure virtual twin group} on $n$ strands.
\medskip

There is another surjective group homomorphism $\theta: VT_n  \to S_n$ given by
$$\theta(s_i) = 1 \quad \text{and} \quad \theta(\rho_i) = \tau_i$$
for all $i$. We denote the kernel of this surjection by $KT_n$. This group plays a crucial role in the rest of this paper. The map $\lambda: S_n \to VT_n$ given by $\lambda(\tau_i)= \rho_i$ is a splitting of the short exact sequence
$$1 \to KT_n \to VT_n \to S_n \to 1,$$
and hence $VT_n = KT_n \rtimes S_n$.
\par
The {\it twin group} $T_n$ has generators $\{ s_1, s_2, \ldots, s_{n-1}\}$ and defining relations
\begin{eqnarray*}
s_i^{2} &=&1 \quad \textrm{for } i = 1, 2, \dots, n-1\\
s_is_j &=& s_js_i \quad \textrm{for } |i - j| \geq 2.
\end{eqnarray*}
It is not clear immediately whether $T_n$ is a subgroup of $VT_n$. We shown later in Corollary \ref{tn-in-ptn} that this is indeed the case.
\par
Throughout,  $\widehat{x}$ denote the inner automorphism of a group $G$ induced by an element $x \in G$. To be precise,  $\widehat{x}(y)=x y x^{-1}$ for all $y\in G$. As usual, the commutator $xy x^{-1}y^{-1}$ is denoted by $[x,y]$. The centraliser of a subgroup $H$ of $G$ is denoted by $\C_G(H)$.
\medskip

\section{Presentation of $KT_n$}\label{section-presentation-pvtn}
In this section, we give a presentation of $KT_n$. We use the standard presentation of $VT_n$ from Section \ref{section-prelim} and the Reidemeister-Schreier method \cite[Theorem 2.6]{Magnus1966}. We take the set 
$$\M_n = \big\{ m_{1, i_1}m_{2, i_2}\dots m_{n-1, i_{n-1}} ~|~ m_{k, i_k}=\rho_k \rho_{k-1}\dots \rho_{i_{k}+1}~ \text{ for each } ~1 \leq k \leq n-1 \text{ and } 0 \leq i_k < k  \big\}$$
as the Schreier system of coset representatives of $KT_n$ in $VT_n$. We set $m_{kk}=1$ for $1 \leq k \leq n-1$. For an element $w \in VT_n$, let $\overline{w}$ denote the unique coset representative of the coset of $w$ in the Schreier set $\M_n$. By Reidemeister-Schreier method, the group $KT_n$ is generated by the set
$$\big\{ \gamma( \mu, a) = (\mu a) (\overline{\mu a})^{-1} ~|~ \mu \in \M_n \text{ and } a \in \{s_1, \dots, s_{n-1}, \rho_1, \dots, \rho_{n-1}\} \big\}.$$
with defining relations
$$\big\{\tau(\mu r \mu^{-1}) \mid \mu \in \M_n~\text{and}~ r ~\text{is a defining relation in} ~VT_n \big\},$$
where $\tau$ is the rewriting process. More precisely, for an element $g=g_1g_2\dots g_k \in VT_n$, we have 
$$\tau(g) = \gamma(1, g_1)\gamma(\overline{g_1}, g_2)\dots \gamma(\overline{g_1g_2\dots g_{k-1}}, g_k).$$

For each $1 \le i \le n-1$, we set
$$\alpha_{i, i+1}= s_i \quad \text{and} \quad \alpha_{i+1, i}= \rho_i s_i \rho_i.$$
For each $1 \leq i < j \leq n$ and $j \ne i+1$, we set
$$\alpha_{i,j} = (\rho_{j-1} \rho_{j-2} \dots \rho_{i+1}) \alpha_{i, i+1} (\rho_{i+1} \dots \rho_{j-2}  \rho_{j-1}) = (\rho_{j-1} \rho_{j-2} \dots \rho_{i+1}) s_i (\rho_{i+1} \dots \rho_{j-2}  \rho_{j-1}),$$ 
and
$$\alpha_{j,i} = (\rho_{j-1} \rho_{j-2} \dots \rho_{i+1}) \alpha_{i+1, i} (\rho_{i+1} \dots \rho_{j-2}  \rho_{j-1})= (\rho_{j-1} \rho_{j-2} \dots \rho_{i+1})\rho_i s_i \rho_i(\rho_{i+1} \dots \rho_{j-2}  \rho_{j-1}).$$
For each $n \ge 2$, let us define
$$\mathcal{S}= \big\{ \alpha_{i, j} ~|~  1 \leq i \neq j \leq n \big\}.$$

For each pair $(i, j)$ with $1 \leq i < j \leq n$, the generators $\alpha_{i,j}$ and $\alpha_{j,i}$ can be topologically represented as in Figure \ref{Generator-KTn}.
\begin{figure}[hbtp]
\centering
\includegraphics[scale=0.9]{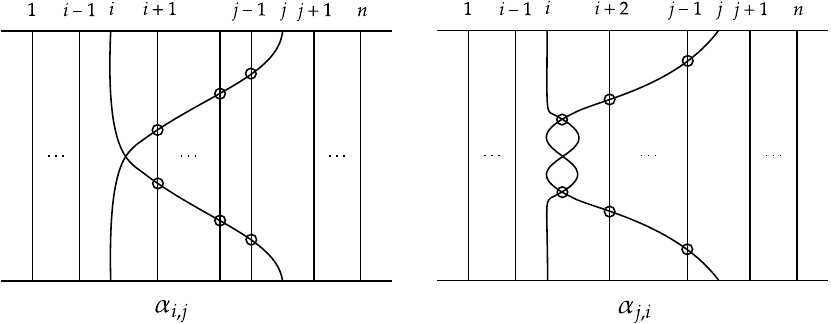}
\caption{Generators $\alpha_{i,j}$ and $\alpha_{j,i}$ of $KT_n$}
\label{Generator-KTn}
\end{figure}

\begin{theorem}\label{KTn-generators}
For each $n \ge 2 $,  the  group $KT_n$ is generated by $\mathcal{S}= \big\{ \alpha_{i, j} ~|~  1 \leq i \neq j \leq n \big\}.$
\end{theorem}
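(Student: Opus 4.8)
The plan is to carry out the Reidemeister--Schreier computation indicated above and then match its output with $\mathcal S$. Recall that $\lambda$ embeds $S_n$ onto the subgroup $R_n:=\langle\rho_1,\dots,\rho_{n-1}\rangle$, which is a complement of $KT_n$ in $VT_n$; in particular $|R_n|=n!=[VT_n:KT_n]$. Since every element of $\M_n$ is a word in the $\rho_j$'s we have $\M_n\subseteq R_n$, and as $\M_n$ is a transversal this forces $\M_n=R_n$. Being a subgroup, $\M_n$ is closed under multiplication, so $\mu\rho_i\in\M_n$ for all $\mu\in\M_n$ and all $i$; hence $\overline{\mu\rho_i}=\mu\rho_i$ and $\gamma(\mu,\rho_i)=1$. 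On the other hand $\theta(\mu s_i)=\theta(\mu)$, so $\mu s_i$ and $\mu$ lie in the same coset, $\overline{\mu s_i}=\mu$, and therefore $\gamma(\mu,s_i)=\mu s_i\mu^{-1}=\mu\,\alpha_{i,i+1}\,\mu^{-1}$. It thus remains to prove that the set $\{\mu\,\alpha_{i,i+1}\,\mu^{-1}:\mu\in\M_n,\ 1\le i\le n-1\}$ coincides with $\mathcal S$.

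The crux is the conjugation identity
\[
\rho_k\,\alpha_{a,b}\,\rho_k \;=\; \alpha_{\tau_k(a),\,\tau_k(b)}, \qquad 1\le k\le n-1,\ \ a\ne b,
\]
which expresses the fact that the conjugation action of $R_n$ on $\mathcal S$ realizes the natural action of $S_n$ on ordered pairs of distinct indices. I would establish it by a case analysis on the position of $k$ relative to $a$ and $b$ (and on the sign of $b-a$): in the generic cases $\rho_k$ commutes with the entire defining word of $\alpha_{a,b}$ and the two sides are literally equal; in the finitely many boundary cases --- $k$ adjacent to, or equal to, one of $a,b$, or sandwiched between them --- one first moves $\rho_k$ into position using the braid relation $\rho_i\rho_{i+1}\rho_i=\rho_{i+1}\rho_i\rho_{i+1}$ and then applies the mixed relation $\rho_i\rho_{i+1}s_i=s_{i+1}\rho_i\rho_{i+1}$, or one of its rearrangements such as $\rho_{a-1}s_a\rho_{a-1}=\rho_a s_{a-1}\rho_a$ and $\rho_a\rho_{a+1}s_a\rho_{a+1}\rho_a=s_{a+1}$. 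This case-checking absorbs essentially all the effort, and I expect it to be the principal obstacle, though each case is routine.

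Granting the identity, write each $\mu\in\M_n$ as a word in the $\rho_k$'s (it already is one) and iterate: $\gamma(\mu,s_i)=\mu\,\alpha_{i,i+1}\,\mu^{-1}=\alpha_{\theta(\mu)(i),\,\theta(\mu)(i+1)}\in\mathcal S$, which is legitimate since $\theta(\mu)(i)\ne\theta(\mu)(i+1)$. Conversely, each $\alpha_{i,j}$ is already one of the Reidemeister--Schreier generators: for $1\le i<j\le n$ one checks directly that $\alpha_{i,j}=\gamma(m_{j-1,i},s_i)$ and $\alpha_{j,i}=\gamma(m_{j-1,i-1},s_i)$, with $m_{j-1,i},m_{j-1,i-1}\in\M_n$; and each $\alpha_{i,j}$ lies in $KT_n$, being a conjugate of $s_i$ (or of $\rho_i s_i\rho_i$) and $\theta$ killing all of $s_1,\dots,s_{n-1}$. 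Hence the Reidemeister--Schreier generating set of $KT_n$ consists precisely of the trivial elements $\gamma(\mu,\rho_i)$ together with the elements of $\mathcal S$, and therefore $KT_n=\langle\mathcal S\rangle$.
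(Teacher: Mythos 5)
Your proposal is correct and follows essentially the same route as the paper: Reidemeister--Schreier gives $\gamma(\mu,\rho_i)=1$ and $\gamma(\mu,s_i)=\mu\alpha_{i,i+1}\mu^{-1}$, and the heart of the matter is the conjugation identity $\rho_k\alpha_{a,b}\rho_k=\alpha_{\tau_k(a),\tau_k(b)}$, which the paper verifies by exactly the case analysis you outline (and which is indeed routine but lengthy). Your explicit identification $\alpha_{i,j}=\gamma(m_{j-1,i},s_i)$, $\alpha_{j,i}=\gamma(m_{j-1,i-1},s_i)$ is a marginally more direct way to phrase the converse inclusion than the paper's transitivity argument, but it is not a genuinely different proof.
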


\begin{proof}
The case $n=2$ is immediate, and hence we assume $n \ge 3$. Note that $KT_n$ is generated by elements $\gamma( \mu, a) = (\mu a) (\overline{\mu a})^{-1}$, where $\mu \in \M_n$ and $a \in \{s_1, \dots, s_{n-1}, \rho_1, \dots, \rho_{n-1}\}$. Let $w=w_1w_2\ldots w_k$, where $w_i \in \{s_1, \dots, s_{n-1}, \rho_1, \dots, \rho_{n-1}\}$. Then, we have $\overline{w} = w_1^*w_2^*\ldots w_k^*$, where $w_i^* = w_i$ if $w_i \in \{\rho_1, \dots, \rho_{n-1}\}$, and $w_i^* = 1$ if $w_i \in \{s_1, \dots, s_{n-1}\}$. Thus, for each  $\mu \in \M_n$ and $1 \le i \le n-1$, we have 
$$\gamma( \mu, \rho_i)  = (\mu \rho_i) (\overline{\mu \rho_i})^{-1} = (\mu \rho_i) (\mu \rho_i)^{-1} = 1$$
and
$$\gamma( \mu, s_i) = (\mu s_i) (\overline{\mu s_i})^{-1} = \mu s_i\mu ^{-1} = \mu \alpha_{i, i+1} \mu^{-1}.$$

We claim that each $\gamma( \mu, s_i)$ lie in  $\mathcal{S}$ and that the conjugation action of $\langle \rho_1, \ldots, \rho_{n-1} \rangle \cong S_n$ on $\mathcal{S}$ is transitive.
\par 

First consider $\alpha_{i,i+1}$ and $\alpha_{i+1,i}$ for a fixed $1\leq i \leq n-1$.

\begin{multicols}{2}
\begin{itemize}[leftmargin=*]
\item $1 \leq k \leq i-2$ or $i+2 \leq k \leq n-1$: 
\begin{align*}
 \rho_k \alpha_{i,i+1} \rho_k &= \alpha_{i,i+1},&\\
 \rho_k \alpha_{i+1,i} \rho_k &= \alpha_{i+1,i}.&
\end{align*}
 \item $ k= i-1:$
\begin{align*}
 \rho_k \alpha_{i,i+1} \rho_k &= \rho_{i-1} s_i \rho_{i-1} = \rho_i s_{i-1} \rho_i = \alpha_{i-1,i+1},\\
 \rho_k \alpha_{i+1,i} \rho_k
 &= \rho_i\rho_{i-1} (\rho_i \rho_{i-1} s_i \rho_{i-1} \rho_i)\rho_{i-1} \rho_i\\
  &=\alpha_{i+1,i-1}.
 \end{align*}
\end{itemize}

\columnbreak

\begin{itemize}[leftmargin=*]
\item $k=i:$
\begin{align*}
 \rho_k \alpha_{i,i+1} \rho_k &= \rho_i s_i \rho_i = \alpha_{i+1,i} ,&\\
 \rho_k \alpha_{i+1,i} \rho_k &=  \rho_i \rho_i s_i \rho_i \rho_i = s_i = \alpha_{i,i+1}.&
\end{align*}

\item $k=i+1:$
\begin{align*}
 \rho_k \alpha_{i,i+1} \rho_k &= \rho_{i+1} s_i \rho_{i+1} = \alpha_{i,i+2} ,&\\
 \rho_k \alpha_{i+1,i} \rho_k &=  \rho_{i+1} \rho_i s_i \rho_i \rho_{i+1} =  \alpha_{i+2,i}.&
\end{align*}
\end{itemize}
\end{multicols}

Next, we consider $\alpha_{i,j}$ and $\alpha_{j,i}$ for some fixed $1 \leq i < j \leq n$ with $j \ne i+1$. 
\begin{itemize}[leftmargin=*]
\item $1 \leq k \leq i-2$ or $j+1 \leq k \leq n-1:$

\begin{align*}
 \rho_k \alpha_{i,j} \rho_k &= \alpha_{i,j},& \rho_k \alpha_{j,i} \rho_k &= \alpha_{j,i}.&
\end{align*}
\item $k=i-1:$
\begin{align*}
\rho_k \alpha_{i,j} \rho_k &= \rho_{i-1} (\rho_{j-1} \rho_{j-2} \dots \rho_{i+1}) s_i (\rho_{i+1} \dots \rho_{j-2}  \rho_{j-1}) \rho_{i-1}\\
&= (\rho_{j-1} \rho_{j-2} \dots \rho_{i+1})  \rho_i s_{i-1} \rho_i (\rho_{i+1} \dots \rho_{j-2}  \rho_{j-1})\\
&= \alpha_{i-1,j},
\end{align*}
\begin{align*}
\rho_k \alpha_{j,i} \rho_k &= \rho_{i-1} (\rho_{j-1} \rho_{j-2} \dots \rho_{i+1}) \rho_i s_i \rho_i (\rho_{i+1} \dots \rho_{j-2}  \rho_{j-1}) \rho_{i-1}\\
&= (\rho_{j-1} \rho_{j-2} \dots \rho_{i+1}) \rho_i\rho_{i-1} (\rho_i  \rho_{i-1}  s_i \rho_{i-1} \rho_i)\rho_{i-1} \rho_i (\rho_{i+1} \dots \rho_{j-2}  \rho_{j-1})\\
&= (\rho_{j-1} \rho_{j-2} \dots \rho_{i+1}) \rho_i\rho_{i-1} s_{i-1}\rho_{i-1} \rho_i (\rho_{i+1} \dots \rho_{j-2}  \rho_{j-1})\\
&= \alpha_{j,i-1}.
\end{align*}
\item $k=i:$
\begin{align*}
\rho_k \alpha_{i,j} \rho_k &= \rho_i (\rho_{j-1} \rho_{j-2} \dots \rho_{i+1}) s_i (\rho_{i+1} \dots \rho_{j-2}  \rho_{j-1}) \rho_i\\
&=  (\rho_{j-1} \rho_{j-2} \dots \rho_{i+2})\rho_i \rho_{i+1} s_i \rho_{i+1} \rho_i(\rho_{i+2} \dots \rho_{j-2}  \rho_{j-1})\\
&=  (\rho_{j-1} \rho_{j-2} \dots \rho_{i+2})s_{i+1}(\rho_{i+2} \dots \rho_{j-2}  \rho_{j-1})\\
&= \alpha_{i+1,j},\\
\rho_k \alpha_{j,i} \rho_k &= \rho_i (\rho_{j-1} \rho_{j-2} \dots \rho_{i+1}) \rho_i s_i \rho_i (\rho_{i+1} \dots \rho_{j-2}  \rho_{j-1}) \rho_i\\
&=  (\rho_{j-1} \rho_{j-2} \dots \rho_{i+2})\rho_i \rho_{i+1} \rho_i s_i \rho_i \rho_{i+1} \rho_i(\rho_{i+2} \dots \rho_{j-2}  \rho_{j-1})\\
&=  (\rho_{j-1} \rho_{j-2} \dots \rho_{i+2})\rho_{i+1} s_{i+1} \rho_{i+1}(\rho_{i+2} \dots \rho_{j-2}  \rho_{j-1})\\
&= \alpha_{j,i+1}.
\end{align*}
\item $k= j-1:$
\begin{align*}
\rho_k \alpha_{i,j} \rho_k &= \rho_{j-1} \rho_{j-1} \dots \rho_{k+1} \rho_{k} \dots \rho_{i+1} s_i \rho_{i+1} \dots \rho_k \rho_{k+1} \dots \rho_{j-1} \rho_{j-1} \\
&= \rho_{j-2} \dots \rho_{k+1} \rho_{k} \dots \rho_{i+1} s_i \rho_{i+1} \dots \rho_k \rho_{k+1} \dots \rho_{j-2} \\
&= \alpha_{i,j-1}
\end{align*}
\begin{align*}
\rho_k \alpha_{j,i} \rho_k &= \rho_{j-1} \rho_{j-1} \dots \rho_{k+1} \rho_{k} \dots \rho_{i+1} \rho_i s_i \rho_i \rho_{i+1} \dots \rho_k \rho_{k+1} \dots \rho_{j-1} \rho_{j-1} \\
&= \rho_{j-2} \dots \rho_{k+1} \rho_{k} \dots \rho_{i+1} \rho_i s_i \rho_i  \rho_{i+1} \dots \rho_k \rho_{k+1} \dots \rho_{j-2} \\
&= \alpha_{j-1,i}.
\end{align*}
\item $k= j:$
\begin{align*}
\rho_k \alpha_{i,j} \rho_k = \alpha_{i,j+1},\\
\rho_k \alpha_{j,i} \rho_k = \alpha_{j+1,i}.
\end{align*}
\item $ i+1 \leq k \leq j-2:$
\begin{align*}
\rho_k \alpha_{i,j} \rho_k &= \rho_k \rho_{j-1} \dots \rho_{k+1} \rho_{k} \dots \rho_{i+1} s_i \rho_{i+1} \dots \rho_k \rho_{k+1} \dots \rho_{j-1} \rho_k \\
&= (\rho_{j-1} \dots \rho_{k+2}) \rho_k \rho_{k+1} \rho_{k} (\rho_{k-1} \dots \rho_{i+1}) s_i (\rho_{i+1} \dots \rho_{k-1})\rho_k \rho_{k+1} \rho_k  (\rho_{k+2}\dots \rho_{j-1})\\
&= (\rho_{j-1} \dots \rho_{k+2}) \rho_{k+1} \rho_k \rho_{k+1}  (\rho_{k-1} \dots \rho_{i+1}) s_i (\rho_{i+1} \dots \rho_{k-1})\rho_{k+1} \rho_k \rho_{k+1}   (\rho_{k+2}\dots \rho_{j-1})\\
&= (\rho_{j-1} \dots \rho_{k+2} \rho_{k+1} \rho_k \rho_{k-1} \dots \rho_{i+1}) \rho_{k+1} s_i \rho_{k+1} (\rho_{i+1} \dots \rho_{k-1} \rho_k \rho_{k+1}  \rho_{k+2}\dots \rho_{j-1})\\
&= (\rho_{j-1} \dots \rho_{k+2} \rho_{k+1} \rho_k \rho_{k-1} \dots \rho_{i+1})  s_i  (\rho_{i+1} \dots \rho_{k-1} \rho_k \rho_{k+1}  \rho_{k+2}\dots \rho_{j-1})\\
&= \alpha_{i,j},\\
\rho_k \alpha_{j,i} \rho_k &= \rho_k \rho_{j-1} \dots \rho_{k+1} \rho_{k} \dots \rho_{i+1} \rho_i s_i \rho_i \rho_{i+1} \dots \rho_k \rho_{k+1} \dots \rho_{j-1} \rho_k \\
&= (\rho_{j-1} \dots \rho_{k+2}) \rho_k \rho_{k+1} \rho_{k} (\rho_{k-1} \dots \rho_{i+1}) \rho_i s_i \rho_i  (\rho_{i+1} \dots \rho_{k-1})\rho_k \rho_{k+1} \rho_k  (\rho_{k+2}\dots \rho_{j-1})\\
&= (\rho_{j-1} \dots \rho_{k+2} \rho_{k+1} \rho_k \rho_{k-1} \dots \rho_{i+1}) \rho_{k+1} \rho_i s_i \rho_i  \rho_{k+1} (\rho_{i+1} \dots \rho_{k-1} \rho_k \rho_{k+1}  \rho_{k+2}\dots \rho_{j-1})\\
&= (\rho_{j-1} \dots \rho_{k+2} \rho_{k+1} \rho_k \rho_{k-1} \dots \rho_{i+1} )\rho_i s_i \rho_i(\rho_{i+1} \dots \rho_{k-1} \rho_k \rho_{k+1}  \rho_{k+2}\dots \rho_{j-1})\\
&= \alpha_{j,i}.
\end{align*}
\end{itemize}

Hence, each generator $\gamma( \mu, s_i)$ lie in $\mathcal{S} $. Conversely, if $1 \leq i< j \leq n$, then we see that conjugation by $(\rho_{i-1}\rho_{i-2}\ldots \rho_2\rho_1)(\rho_{j-1}\rho_{j-2}\ldots \rho_3\rho_2)$ maps 
$\alpha_{1,2}$ (respectively $\alpha_{2,1}$) to $\alpha_{i,j}$, (respectively $\alpha_{j,i}$) whereas conjugation by $\rho_1$ maps $\alpha_{1,2}$ to $\alpha_{2,1}$. That is, the conjugation action of  $S_n$ on the set $\mathcal{S}$ is transitive. Hence, we have proved that $\mathcal{S}$ generates $KT_n$.
\end{proof}

\begin{remark}\label{rephrasing-action}
We can summarise the (left) action of $S_n$ on the set  $\mathcal{S}$ as
\begin{equation*}
\tau_k \cdot \alpha_{i,j}:= \rho_k \alpha_{i,j} \rho_k = \alpha_{\rho_k(i),\rho_k(j)}
\end{equation*}
for every $1 \leq i \neq j \leq n$ and $1 \le k \le n-1$.
\medskip
\end{remark}

\begin{theorem}\label{Ktn-right-angled-Coxeter}
For each $n \ge 2 $, the group $KT_n$  is generated by $\mathcal{S}= \big\{ \alpha_{i, j} ~|~  1 \leq i \neq j \leq n \big\}$ with the following defining relations:
\begin{itemize}
\item[(1)] $\alpha_{i,j}^2 = 1$  for all $1\leq i \neq j \leq n,$ and
\item[(2)] $\alpha_{i,j} \alpha_{k,l} =  \alpha_{k,l} \alpha_{i,j}  \text{ for distinct integers } i, j, k, l$.
\end{itemize}
\end{theorem}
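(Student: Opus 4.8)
The plan is to introduce the abstract group $\widetilde{G}$ presented by the generators $\alpha_{i,j}$ ($1\le i\ne j\le n$) and the relations (1), (2) of the statement, and to prove $KT_n\cong\widetilde{G}$ by exhibiting mutually inverse homomorphisms. First I would check that $\alpha_{i,j}\mapsto\alpha_{i,j}$, where on the right $\alpha_{i,j}$ denotes the element of $KT_n$ defined just before Theorem \ref{KTn-generators}, yields a homomorphism $\phi\colon\widetilde{G}\to KT_n$. Relation (1) holds in $KT_n$ because $\alpha_{i,i+1}^{2}=s_i^{2}=1$ and $\alpha_{i+1,i}^{2}=(\rho_i s_i\rho_i)^{2}=1$ by \eqref{1} and \eqref{3}, and every other $\alpha_{i,j}$, resp. $\alpha_{j,i}$, is a $\langle\rho_1,\dots,\rho_{n-1}\rangle$-conjugate of one of these. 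Relation (2) holds because, by Remark \ref{rephrasing-action}, for distinct $i,j,k,l$ the commutator $[\alpha_{i,j},\alpha_{k,l}]$ is the $S_n$-conjugate of $[\alpha_{1,2},\alpha_{3,4}]=[s_1,s_3]$ obtained from any $\sigma\in S_n$ with $\sigma(1)=i$, $\sigma(2)=j$, $\sigma(3)=k$, $\sigma(4)=l$, and $[s_1,s_3]=1$ by \eqref{2}; here $n\ge 4$, while (2) is vacuous for $n\le 3$. By Theorem \ref{KTn-generators}, $\phi$ is onto, so it remains to produce a left inverse.

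To this end I would let $S_n$ act on $\widetilde{G}$ by $\sigma\cdot\alpha_{i,j}=\alpha_{\sigma(i),\sigma(j)}$, which is well defined because this permutation of the generating set carries each defining relator of $\widetilde{G}$ to another relator of the same type. Then, in the semidirect product $\widetilde{G}\rtimes S_n$, I would verify that $s_i\mapsto\alpha_{i,i+1}$ and $\rho_i\mapsto\tau_i$ respects all seven defining relations \eqref{1}--\eqref{7} of $VT_n$, hence defines a homomorphism $\Psi\colon VT_n\to\widetilde{G}\rtimes S_n$. Relations \eqref{1}--\eqref{5} are immediate; \eqref{6} reduces to the fact that $\tau_i$ fixes $j$ and $j+1$ when $|i-j|\ge 2$, and \eqref{7} to the computation $(\tau_i\tau_{i+1})(i)=i+1$ and $(\tau_i\tau_{i+1})(i+1)=i+2$. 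Composing $\Psi$ with the projection $\widetilde{G}\rtimes S_n\to S_n$ gives $\theta$, so $\Psi$ maps $KT_n=\ker\theta$ into $\widetilde{G}$, producing $\Psi|_{KT_n}\colon KT_n\to\widetilde{G}$.

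It then remains to compute $\Psi|_{KT_n}$ on the generating set $\mathcal{S}$. Using the definition $\alpha_{i,j}=(\rho_{j-1}\cdots\rho_{i+1})\,s_i\,(\rho_{i+1}\cdots\rho_{j-1})$ together with the fact that the permutation $\tau_{j-1}\cdots\tau_{i+1}$ sends $(i,i+1)$ to $(i,j)$, one finds $\Psi|_{KT_n}(\alpha_{i,j})=\alpha_{i,j}$, and similarly $\Psi|_{KT_n}(\alpha_{j,i})=\alpha_{j,i}$, for all $i\ne j$. Therefore $\phi\circ\Psi|_{KT_n}$ and $\Psi|_{KT_n}\circ\phi$ are the identity on $\mathcal{S}$; since $\mathcal{S}$ generates both $KT_n$ and $\widetilde{G}$, these composites are the identity maps. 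Hence $\phi$ is an isomorphism, and $\langle\mathcal{S}\mid(1),(2)\rangle$ is a presentation of $KT_n$, which is manifestly that of an irreducible right-angled Coxeter group.

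I expect the main difficulty to be one of strategy rather than of computation: carrying out the Reidemeister--Schreier rewriting of every relator $\tau(\mu r\mu^{-1})$ directly, as the set-up preceding Theorem \ref{KTn-generators} might suggest, forces one to keep track of which of the many Schreier generators $\gamma(\mu,s_i)$ coincide and which relators collapse to triviality, whereas packaging the $S_n$-action into $\widetilde{G}\rtimes S_n$ reduces the whole verification to the seven relations above. The only other point needing a little care is bookkeeping for small $n$: relations (2) are vacuous for $n\le 3$, and one must note that \eqref{2} is correspondingly vacuous in $VT_n$, so the arguments go through unchanged.
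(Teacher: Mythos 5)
Your argument is correct, but it takes a genuinely different route from the paper. The paper, having fixed the Schreier transversal $\M_n$, obtains the defining relations of $KT_n$ mechanically as the rewritten relators $\tau(\mu r\mu^{-1})$ and then works through each family of relators of $VT_n$ by hand: relations \eqref{3}--\eqref{5} contribute nothing, the families $s_i^2$, $(s_i\rho_j)^2$ and the mixed relation \eqref{7} all collapse to involution relations, and $(s_is_j)^2$ with $|i-j|\ge 2$ yields the commuting relations via transitivity of the $S_n$-action on the set $\mathcal{D}$ of disjoint pairs. Completeness of the resulting presentation is then guaranteed by the Reidemeister--Schreier theorem. You instead posit the abstract right-angled Coxeter group $\widetilde{G}$, get surjectivity of $\widetilde{G}\to KT_n$ from Theorem \ref{KTn-generators}, and build the retraction by mapping $VT_n$ into $\widetilde{G}\rtimes S_n$; checking the seven relations of $VT_n$ there is routine (your computations for \eqref{6} and \eqref{7} are the right ones, and $\Psi(\alpha_{i,j})=\alpha_{i,j}$ follows since $\tau_{j-1}\cdots\tau_{i+1}$ sends $i\mapsto i$ and $i+1\mapsto j$). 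Both proofs lean on the same groundwork --- the generating set and the conjugation formula of Remark \ref{rephrasing-action} --- but yours trades the relator-by-relator bookkeeping of the rewriting process for one explicit homomorphism verification, which is self-contained and arguably less error-prone; the paper's version has the advantage of not requiring one to guess the presentation in advance. One point worth making explicit if you write this up: the identity $\theta=p\circ\Psi$ (with $p$ the projection to $S_n$) is what places $\Psi(KT_n)$ inside $\widetilde{G}$, and the conclusion that $\phi\circ\Psi|_{KT_n}=\id$ does require Theorem \ref{KTn-generators}, so your proof is not independent of that result --- but neither is the paper's.
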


\begin{proof}
Theorem \ref{KTn-generators} already shows that $\mathcal{S}$ generates $KT_n$. The defining relations are given by 
$$\tau(\mu r \mu^{-1}),$$
where $\tau$ is the rewriting process, $\mu \in \M_n$ and $r$ is a defining relation in $VT_n$.
\medskip

Let us take $ \mu = \rho_{i_1} \rho_{i_2} \dots \rho_{i_k} \in \M_n$ and $g=g_1g_2\dots g_t$ a relation of $VT_n$. Note that, since $\gamma(\mu, \rho_i)=1$ for all $i$,  we have 
$$\tau(\mu g \mu^{-1}) = \gamma(\overline{\mu}, g_1)\gamma(\overline{\mu g_1}, g_2)\dots \gamma(\overline{\mu g_1g_2\dots g_{t-1}}, g_t).$$
Further, no non-trivial relations for $KT_n$ can be obtained from the relations \eqref{3}--\eqref{5} of $VT_n$. Next, we consider the remaining relations one by one.
\begin{itemize}
\item First we consider the relations $s_i^2 = 1$ for $1 \le i \le n-1$. In this case, we have
\begin{align*}
\tau(\mu s_i^2 \mu^{-1}) &=  \gamma(\overline{\mu}, s_i) \gamma(\overline{\mu s_i}, s_i)\\
&= \gamma(\mu, s_i) \gamma(\mu, s_i) = (\mu \alpha_{i,i+1}  \mu^{-1})^2.
\end{align*}
Since the conjugation action of $S_n$ on $\mathcal{S}$ is transitive, it follows that all the generators are involutions.
\item[]

\item Next we consider the relations $ (s_i \rho_j)^2 = 1$ for $|i-j| > 1$. We have
\begin{align*}
\tau(\mu s_i \rho_j s_i \rho_j \mu^{-1}) &= \gamma(\overline{\mu}, s_i) \gamma(\overline{\mu s_i \rho_j}, s_i)\\
&= \gamma(\mu, s_i) \gamma(\mu \rho_j, s_i) = (\mu s_i  \mu^{-1}) (\mu \rho_j s_i \rho_j  \mu^{-1}) = (\mu s_i  \mu^{-1})^2,
\end{align*}
which again shows that the generators are of order two.
\item[]
\item Now we consider the relations $\rho_i s_{i+1} \rho_i \rho_{i+1} s_i \rho_{i+1}=1$, where $1 \le i\le n-2$. Computing
\begin{align*}
\tau(\mu \rho_i s_{i+1} \rho_i \rho_{i+1} s_i \rho_{i+1} \mu^{-1})&= \gamma(\overline{\mu \rho_i}, s_{i+1}) \gamma(\overline{ \mu \rho_i s_{i+1} \rho_i \rho_{i+1}}, s_i)\\
&= \gamma(\mu \rho_i, s_{i+1}) \gamma(\mu  \rho_{i+1}, s_i) = (\mu \rho_i s_{i+1} \rho_i \mu^{-1}) (\mu \rho_{i+1} s_i \rho_{i+1} \mu^{-1})\\
&= (\mu \rho_i \alpha_{i+1, i+2}  \rho_i\mu^{-1}) (\mu \rho_{i+1}  \alpha_{i, i+1} \rho_{i+1} \mu^{-1}) = (\mu \alpha_{i, i+2} \mu^{-1}) (\mu  \alpha_{i, i+2}  \mu^{-1}),
\end{align*}
we see that the generators are of order two.
\item[]
\item Finally we consider the relations $ (s_i s_j)^2 = 1$ for $|i-j| > 1$. If $\mu = 1$, then we have
\begin{align*}
\tau(s_i s_j s_i s_j) &= \gamma(1, s_i) \gamma(\overline{s_i}, s_j)  \gamma(\overline{s_i s_j}, s_i)  \gamma(\overline{s_i s_j s_i}, s_j)\\
&= \gamma(1, s_i) \gamma(1, s_j)  \gamma(1, s_i)  \gamma(1, s_j) = (\alpha_{i, i+1} \alpha_{j, j+1})^2.
\end{align*}
For $\mu \neq 1$, we have
\begin{align}
\nonumber \tau(\mu s_i s_j s_i s_j \mu^{-1}) &= \gamma(\overline{\mu}, s_i) \gamma(\overline{\mu s_i}, s_j)  \gamma(\overline{\mu s_i s_j}, s_i)  \gamma(\overline{ \mu s_i s_j s_i}, s_j)\\
\nonumber &= \gamma(\mu, s_i) \gamma(\mu, s_j)  \gamma(\mu, s_i)  \gamma(\mu, s_j) = (\mu s_i \mu^{-1})(\mu s_j \mu^{-1}) (\mu s_i \mu^{-1}) (\mu s_j \mu^{-1})\\
&= ((\mu \alpha_{i, i+1} \mu^{-1})(\mu \alpha_{j, j+1} \mu^{-1}))^2.\label{commuting-relations}
\end{align}
\end{itemize}
Note that for $n=2, 3$, these types of relations do not occur. For $n \ge 4$, we set 
$$\mathcal{D} = \big\{(\alpha_{i,j}, \alpha_{k,l})~|~i, j, k, l~\textrm{are distinct integers between } 1 \textrm{ and } n\big\}.$$
Remark \ref{rephrasing-action} and the theory of symmetric groups gives an induced transitive action of $S_n$ on $\mathcal{D}$ given by 
 $$ \rho \cdot (\alpha_{i,j}, \alpha_{k,l}) =  (\alpha_{\rho(i),\rho(j)}, \alpha_{\rho(k),\rho(l)})$$
 for all $\rho \in S_n$. Thus, the  defining relations of $KT_n$ obtained from \eqref{commuting-relations} are precisely of the form 
$$\alpha_{i,j} \alpha_{k,l} =  \alpha_{k,l} \alpha_{i,j},$$
where $i, j, k, l$ are distinct integers between 1 and $n$. This completes the proof of the theorem.
\end{proof}

\begin{corollary}\label{irred-raag}
For each $n \ge 2$, the  group $KT_n$ is an irreducible right-angled Coxeter group of rank $n(n-1)$ and with trivial center.
\end{corollary}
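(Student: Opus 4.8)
The plan is to read off everything from the presentation of $KT_n$ established in Theorem \ref{Ktn-right-angled-Coxeter}. Recall that a right-angled Coxeter group is given by a simplicial graph $\Gamma$ whose vertices are involutive generators and whose edges record the commuting pairs; it is irreducible precisely when $\Gamma$ is not a join, equivalently when the complementary graph $\Gamma^{c}$ is connected. Here the generating set $\mathcal{S} = \{\alpha_{i,j} \mid 1 \le i \ne j \le n\}$ has cardinality $n(n-1)$, so once we know the generators are pairwise distinct, the rank is $n(n-1)$ as claimed; distinctness for small $n$ is clear and for general $n$ follows since the abelianization $KT_n^{\mathrm{ab}} \cong (\mathbb{Z}/2)^{n(n-1)}$ by Theorem \ref{Ktn-right-angled-Coxeter}, which also shows the presentation is not redundant.

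Next I would verify irreducibility. The defining graph $\Gamma$ has vertex set $\mathcal{S}$, with $\alpha_{i,j}$ and $\alpha_{k,l}$ joined by an edge exactly when $\{i,j\} \cap \{k,l\} = \emptyset$. So $\alpha_{i,j}$ and $\alpha_{k,l}$ are \emph{non}-adjacent in $\Gamma$ precisely when $\{i,j\}$ and $\{k,l\}$ share an index. To see that $KT_n$ is irreducible, I must show $\Gamma$ is not a nontrivial join, i.e. that the complement graph $\Gamma^{c}$ (edges = pairs of ordered index-pairs sharing a coordinate) is connected. This is a short combinatorial argument: any two ordered pairs $(i,j)$ and $(k,l)$ can be connected in $\Gamma^c$ by a path through intermediate pairs sharing one index at each step, e.g. $(i,j) \sim (i,l) \sim (k,l)$ when all four indices exist (using $n \ge 3$; for $n=2$ there are only the two generators $\alpha_{1,2}, \alpha_{2,1}$ which do not commute, so $\Gamma$ is a single edge's complement — two isolated vertices — hence $\Gamma^c$ is connected and the group is the infinite dihedral group, irreducible). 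Alternatively, one can invoke the transitivity of the $S_n$-action on $\mathcal{S}$ from Remark \ref{rephrasing-action} together with the observation that $\alpha_{1,2}$ and $\alpha_{2,1}$ are non-adjacent to show no proper nonempty subset of $\mathcal{S}$ can be separated off as a join factor.

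Finally, the center. For a right-angled Coxeter group $W(\Gamma)$ it is standard (see e.g. the structure theory of Coxeter groups, or directly: the center of $W(\Gamma)$ is generated by those vertices adjacent to every other vertex, when $\Gamma$ is a complete graph this gives $W(\Gamma) = (\mathbb{Z}/2)^{|V|}$, and otherwise the center is trivial iff no vertex is adjacent to all others and $\Gamma$ has no such "cone" structure) that $Z(W(\Gamma))$ is nontrivial only when $\Gamma$ has a vertex joined to all others. In our graph, given any $\alpha_{i,j}$, the generator $\alpha_{i,k}$ (for any $k \ne i,j$, which exists since $n \ge 3$) shares the index $i$ and hence is not adjacent to $\alpha_{i,j}$; for $n = 2$ the two generators already fail to commute. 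So no vertex of $\Gamma$ is universally adjacent, whence $Z(KT_n) = 1$. The main obstacle, such as it is, is purely bookkeeping: being careful about the degenerate cases $n = 2, 3$ where the commuting relations of type (2) are vacuous, and making sure the graph-theoretic statements (connectedness of $\Gamma^c$, absence of a universal vertex) are phrased cleanly rather than getting lost in index juggling.
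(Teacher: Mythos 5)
Your proposal is correct. The rank and irreducibility parts follow essentially the same route as the paper (the paper simply asserts that irreducibility ``follows from the corresponding Coxeter graph,'' whereas you spell out the connectivity of the complement graph via the path $(i,j)\sim(i,l)\sim(k,l)$ and handle the degenerate cases $n=2,3$ explicitly --- a worthwhile expansion, since for $n\le 3$ the commuting relations are vacuous and $KT_n$ is just a free product of copies of $\mathbb{Z}_2$). Where you genuinely diverge is the center: the paper invokes Bourbaki's theorem that an \emph{infinite irreducible} Coxeter group has trivial center, which tacitly uses that $KT_n$ is infinite (it contains the infinite dihedral parabolic $\langle \alpha_{1,2},\alpha_{2,1}\rangle$), while you use the right-angled-specific fact that $Z(W(\Gamma))$ is generated by the vertices adjacent to every other vertex, and then check directly that no $\alpha_{i,j}$ commutes with $\alpha_{i,k}$. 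Your argument is more elementary and self-contained but relies on a fact special to right-angled Coxeter groups (or more generally graph products); the paper's is shorter but leans on general Coxeter theory and on irreducibility having already been established. Either is acceptable; if you write yours up, state the universal-vertex characterization of the center of a RACG cleanly with a reference rather than the somewhat garbled parenthetical you currently have.
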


\begin{proof}
That $KT_n$ is a right-angled Coxeter group of rank $n(n-1)$ follows from Theorem \ref{Ktn-right-angled-Coxeter}. Irreducibility follows from the corresponding Coxeter graph of $KT_n$.
By Bourbaki \cite[p.137]{Bourbaki}, the center of an infinite irreducible Coxeter group is trivial.
\end{proof}

Recall that, if $(W, S)$ is a Coxeter system and $X$ a subset of $S$, then the subgroup of $W$ generated by $X$ is called a standard parabolic subgroup of $W$, and is denoted by $W[X]$.  It is well-known that the Artin braid group $B_n$ embeds inside the virtual braid group $VB_n$ \cite{MR1410467, Gaudreau2020, Kamada, MR1997331}. As another consequence of Theorem \ref{Ktn-right-angled-Coxeter}, we obtain a similar result for twin and virtual twin groups.

\begin{corollary}\label{tn-in-ptn}
$T_n$ is a subgroup of $VT_n$ for each $n \ge 2$.
\end{corollary}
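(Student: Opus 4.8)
The plan is to identify $T_n$ with the special (parabolic) subgroup of the right-angled Coxeter group $KT_n$ generated by the Coxeter generators $\alpha_{1,2},\alpha_{2,3},\dots,\alpha_{n-1,n}$, and then to read off its presentation from Theorem~\ref{Ktn-right-angled-Coxeter}.

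First I would observe that the defining relations $s_i^2=1$ and $s_is_j=s_js_i$ for $|i-j|\ge 2$ of $T_n$ are precisely relations \eqref{1} and \eqref{2} of $VT_n$, so the assignment $s_i\mapsto s_i$ extends to a homomorphism $\iota\colon T_n\to VT_n$; since $\theta(s_i)=1$, the image of $\iota$ lies in $KT_n$, and under the presentation of Theorem~\ref{Ktn-right-angled-Coxeter} we have $s_i=\alpha_{i,i+1}$, so $\iota(T_n)=H:=\langle\alpha_{i,i+1}\mid 1\le i\le n-1\rangle\le KT_n$.

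Next I would pin down the isomorphism type of $H$. Let $W_0$ be the abstract right-angled Coxeter group on generators $v_1,\dots,v_{n-1}$ in which each $v_i$ is an involution and $v_i,v_j$ commute precisely when $|i-j|\ge 2$; comparing with the presentation recalled in Section~\ref{section-prelim}, $W_0\cong T_n$. Since the Coxeter graph of $KT_n$ has an edge between $\alpha_{i,i+1}$ and $\alpha_{j,j+1}$ exactly when $i,i+1,j,j+1$ are four distinct integers, i.e. when $|i-j|\ge 2$, the defining relations of $W_0$ hold among the elements $\alpha_{i,i+1}$ of $KT_n$, so von Dyck's lemma yields a surjection $p\colon W_0\twoheadrightarrow H$. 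To show $p$ is injective I would exhibit a left inverse: define $r\colon KT_n\to W_0$ on the Coxeter generators by $r(\alpha_{i,i+1})=v_i$ for $1\le i\le n-1$ and $r(\alpha_{i,j})=1$ for every other pair $(i,j)$. A direct check shows that $r$ respects the relations of Theorem~\ref{Ktn-right-angled-Coxeter}: each relation $\alpha_{i,j}^2=1$ maps to $v_i^2=1$ or to $1=1$, and a commuting relation $\alpha_{i,j}\alpha_{k,l}=\alpha_{k,l}\alpha_{i,j}$ with $i,j,k,l$ distinct becomes trivial unless both factors survive, in which case they survive as $v_a,v_b$ with $\{a,a+1\}\cap\{b,b+1\}=\varnothing$, whence $|a-b|\ge 2$ and $v_av_b=v_bv_a$ holds in $W_0$. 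Thus $r$ is a well-defined homomorphism with $r\circ p=\mathrm{id}_{W_0}$, so $p$ is an isomorphism, and therefore $\iota\colon T_n\cong W_0\cong H\hookrightarrow VT_n$ is injective.

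The only step needing care is the verification that $r$ is well defined on all of $KT_n$; alternatively, one may bypass it entirely by invoking the standard fact that a subset of the standard generators of a Coxeter group generates a Coxeter group with the induced Coxeter matrix (Bourbaki~\cite{Bourbaki}), applied to the subset $\{\alpha_{i,i+1}\}_{i=1}^{n-1}\subseteq KT_n$, which again identifies $H$ with $T_n$.
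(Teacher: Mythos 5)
Your proof is correct and takes essentially the same route as the paper: the paper's proof simply observes that the standard parabolic subgroup of the right-angled Coxeter group $KT_n$ generated by $\{\alpha_{i,i+1}=s_i \mid 1\le i\le n-1\}$ is a Coxeter group with the induced Coxeter matrix, which is precisely $T_n$. Your explicit retraction $r\colon KT_n\to W_0$ is just a hands-on verification of that standard parabolic-subgroup fact (and it checks out, since $\alpha_{i,i+1}$ and $\alpha_{i+1,i+2}$ share the index $i+1$ and hence have no commuting relation in $KT_n$), so the only difference is that you prove what the paper cites.
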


\begin{proof}
The standard parabolic subgroup of $KT_n$ generated by $$\{ \alpha_{i,i+1} = s_i \mid 1\leq i \leq n-1\}$$ is precisely the twin group $T_n$. Hence, $T_n$ sits inside $KT_n$, and consequently inside $VT_n$.
\end{proof}

Recall that the {\it pure twin group} $PT_n$ is the kernel of the natural surjection from $T_n$ onto $S_n$ given by $s_i \mapsto \tau_i$. It follows from Corollary \ref{tn-in-ptn} that $PT_n$ is a subgroup of $PVT_n$, where it has been proved recently that $PVT_n$ is a right-angled Artin group \cite[Corollary 3.4]{NaikNandaSingh2020}. As noted in the introduction, $PT_n$ is free for $n = 3,4, 5$, and $PT_6$ is isomorphic to the free product of $F_{71}$ and  20 copies of $\mathbb{Z} \oplus \mathbb{Z}$, which are all right-angled Artin groups. Though a  presentation of $PT_n$ has been given in  \cite{Mostovoy}, it is not clear whether $PT_n$ is a right-angled Artin group for $n \ge 7$, but we believe that it is the case.
\begin{conjecture}
$PT_n$ is a right-angled Artin group for each $n \ge 3$.
\end{conjecture}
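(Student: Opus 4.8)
The natural point of departure is the minimal presentation of $PT_n$ announced by Mostovoy \cite{Mostovoy}. A finitely presented group is a right-angled Artin group precisely when it admits a presentation in which every relator is a commutator $[x,y]=xyx^{-1}y^{-1}$ of two distinct generators; the defining graph $\Gamma$ then has these generators as its vertices and one edge for each such relator, and the group is $A(\Gamma)$. The plan is therefore to transform Mostovoy's presentation, by a sequence of Tietze moves, into this normal form. For $n=3,4,5$ nothing is needed, since $PT_n$ is free and hence equals $A(\Gamma_n)$ with $\Gamma_n$ edgeless, and for $n=6$ the isomorphism $PT_6\cong F_{71}*_{20}(\mathbb{Z}\oplus\mathbb{Z})$ of Mostovoy and Roque-M\'arquez \cite{MostRoq} already exhibits $PT_6$ as $A(\Gamma_6)$ with $\Gamma_6$ the disjoint union of $71$ isolated vertices and $20$ edges. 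The feature visible in these cases --- that the only relations are commutations between generators with ``disjoint supports'', occurring in small, essentially independent clusters --- is precisely what one hopes to establish for all $n$.

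To make this precise I would first organise the combinatorics of the generators of $PT_n$. Geometrically each such generator is represented by a basic pure twin in which only two strands interact, crossing each other twice, with a prescribed set of intermediate strands passing between them --- the generators used in the computations of \cite{GonGutiRoq, MostRoq, Mostovoy}. One defines a graph $\Gamma_n$ on this generating set by joining two generators when their supports are disjoint in the appropriate planar sense, and then aims to show: (i) every defining relation of Mostovoy's presentation becomes, after Tietze simplification, a consequence of the commutators recorded by the edges of $\Gamma_n$; and (ii) conversely the obvious epimorphism $A(\Gamma_n)\twoheadrightarrow PT_n$ is an isomorphism. A way to handle (i) and (ii) at once is to produce an explicit $K(PT_n,1)$ that is manifestly the Salvetti complex of a RAAG. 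There are two natural cube-complex models to try: since $T_n$ is a right-angled Coxeter group, $PT_n$ --- torsion-free and of index $n!$ --- acts freely, properly and cocompactly on the Davis complex $\Sigma$ of $T_n$, so that $X_n:=\Sigma/PT_n$ is a compact non-positively curved cube complex and a $K(PT_n,1)$; alternatively, Farley's result \cite{Farley2021} that $PT_n$ is a diagram group supplies another such complex. One would then try to show that $X_n$ is special in the sense of Haglund--Wise and, in fact, collapses to (or is homotopy equivalent to) the Salvetti complex of $A(\Gamma_n)$, which amounts to checking the stronger combinatorial conditions on the hyperplane classes that single out RAAG Salvetti complexes among special cube complexes.

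The main obstacle is the lack, for $n\ge 7$, of any structural control over $PT_n$ comparable to that available for $n\le 6$: Mostovoy's relators are not written in commutator form, their number and shape grow with $n$, and they have not been arranged into a pattern from which $\Gamma_n$ can be read off uniformly. The crux is thus to prove that after Tietze simplification no ``genuinely non-RAAG'' relation survives --- equivalently, in the cube-complex formulation, that the hyperplane arrangement of $X_n$ globally looks like that of a RAAG Salvetti complex rather than merely being special --- and to identify $\Gamma_n$ explicitly. I expect this to require a delicate analysis of how the basic two-strand subdiagrams can be superimposed; organising that analysis through the diagram-group structure of $PT_n$, which already places $PT_n$ on a CAT(0) cube complex, seems the most promising route to the hyperplane combinatorics one needs.
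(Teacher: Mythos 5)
The statement you have been asked to prove is recorded in the paper only as a conjecture: the authors explicitly say that for $n\ge 7$ it is not known whether $PT_n$ is a right-angled Artin group, and they give no proof. Your proposal does not close that gap. The cases you do dispose of, $3\le n\le 6$, are exactly the previously known ones ($PT_n$ free for $3\le n\le 5$ by \cite{BarVesSin, GonGutiRoq}, and $PT_6\cong F_{71}*_{20}(\mathbb{Z}\oplus\mathbb{Z})$ by \cite{MostRoq}), and it is precisely because only these are known that the paper states the general claim as a conjecture rather than a theorem.

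For $n\ge 7$ everything in your write-up is conditional on the step you yourself call the crux: that Mostovoy's relators can be Tietze-reduced to commutators of distinct generators, equivalently that the candidate epimorphism $A(\Gamma_n)\twoheadrightarrow PT_n$ is injective, equivalently that the cube complex $X_n$ is a Salvetti complex. No argument is offered for any of these; they are restatements of the conjecture in three different languages. Two further concrete problems with the proposed route. First, specialness of $X_n$ in the sense of Haglund--Wise would only embed $PT_n$ into a RAAG, not identify it with one --- closed surface groups act freely and cocompactly on CAT(0) cube complexes and are special, yet are not RAAGs --- so the cube-complex approach must verify the much stronger Salvetti condition on hyperplanes, and you supply no mechanism for doing that; neither the Davis-complex quotient nor Farley's diagram-group complex \cite{Farley2021} is known to have this property. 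Second, the graph $\Gamma_n$ is never actually defined: ``supports are disjoint in the appropriate planar sense'' does not pin down an edge relation, so even the target of the purported isomorphism is unspecified. As it stands this is a reasonable research program, consistent with the authors' stated belief, but it contains no proof.
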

\medskip

\section{Homomorphisms from $VT_n$ to $S_m$}\label{homo from vtn to sn}
A group homomorphism $\psi: G \to H$ is said to be \textit{abelian} if $\psi(G)$ is an abelian subgroup of $H$. Two homomorphisms $\psi_1, \psi_2 : G \to H$ are said to be \textit{conjugate} if there exists $x \in H$ such that $\psi_2 = \widehat{x} ~\psi_1$, where $\widehat{x}$ is the inner automorphism induced by $x$, as defined at the end of Section \ref{section-prelim}. It is to be noted that $\Out(S_n)$ is trivial for all $n \neq 6$ and $\Out(S_6) \cong \mathbb{Z}_2$. The latter group is generated by the class of a non-inner automorphism $\nu : S_6 \to S_6$ of order two.
\medskip

The following result is well-known from the works of Artin \cite{Artin} and Lin \cite{Lin-1, Lin-2} and is crucial for the proof of Theorem \ref{Homomorphisms-VTn-Sm}.

\begin{proposition}\label{Homorphisms-Sn-Sm}
Let $n, m$ be integers such that $n \geq m$, $n \geq 5$ and $m \geq 2$. Let $\phi : S_n \to S_m$ be a homomorphism. Then, upto conjugation of homomorphisms, one of the following assertions holds:
\begin{enumerate}
\item $\phi$ is abelian,
\item $n=m$ and $\phi = \id$, 
\item $n=m=6$ and $\phi = \nu$.
\end{enumerate}
\end{proposition}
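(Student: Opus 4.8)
The plan is to analyze $\phi$ through its kernel, using the simplicity of $A_n$, and then to invoke the classical description of $\Aut(S_n)$.

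First I would recall that for $n \ge 5$ the alternating group $A_n$ is simple, and since $[S_n : A_n] = 2$, the only normal subgroups of $S_n$ are $1$, $A_n$ and $S_n$. Hence $\Ker(\phi) \in \{1, A_n, S_n\}$. If $\Ker(\phi) = S_n$, then $\phi$ is trivial; if $\Ker(\phi) = A_n$, then $\phi(S_n) \cong S_n/A_n \cong \mathbb{Z}_2$. In both cases $\phi(S_n)$ is abelian, which is case (1).

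It remains to treat the case $\Ker(\phi) = 1$, that is, $\phi$ injective. Then $|S_n| = n!$ divides $|S_m| = m!$ by Lagrange's theorem, while $m \le n$ gives $m! \mid n!$; together these force $m = n$. Thus $\phi$ is an injective endomorphism of the finite group $S_n$, hence an automorphism. Now I would use the classical fact that $\Out(S_n)$ is trivial for $n \ne 6$ and $\Out(S_6) \cong \mathbb{Z}_2$, generated by the class of the exceptional order-two automorphism $\nu$. For $n \ne 6$ this means $\phi = \widehat{x}$ for some $x \in S_n$, so $\phi$ is conjugate to $\id$, giving case (2). For $n = 6$, $\phi$ is either inner (conjugate to $\id$) or of the form $\widehat{x}\,\nu$ (conjugate to $\nu$), giving cases (2) and (3). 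This exhausts all possibilities.

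The only genuinely nontrivial inputs are the two classical structural facts invoked above: the simplicity of $A_n$ for $n \ge 5$ and the identification $\Out(S_6) \cong \mathbb{Z}_2$ with $\Out(S_n) = 1$ otherwise. Given these, the argument is a short kernel analysis together with an order count; the real difficulty is external, lying in the existence and uniqueness (up to inner automorphisms) of the exceptional outer automorphism of $S_6$. Since this is standard and attributed here to Artin and Lin, I would simply cite it rather than reprove it.
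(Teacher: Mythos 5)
Your argument is correct and complete modulo the two classical facts it cites (simplicity of $A_n$ for $n\ge 5$, and the structure of $\Out(S_n)$). The paper itself offers no proof of this proposition --- it is stated as well-known and attributed to Artin and Lin --- so there is nothing internal to compare against; your kernel analysis ($\Ker(\phi)\in\{1,A_n,S_n\}$), the order count forcing $n=m$ in the injective case, and the reduction to $\Out(S_n)$ constitute the standard argument that those references supply, and every step checks out against the paper's definition of conjugate homomorphisms.
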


Let $\theta: VT_n  \to S_n$ and  $\lambda: S_n \rightarrow VT_n$ be as defined in Section \ref{section-prelim}. We prove the following result. 

\begin{theorem}\label{Homomorphisms-VTn-Sm}
Let $n, m$ be integers such that $n \geq m$, $n \geq 5$ and $m \geq 2$. Let $\psi : VT_n \to S_m$ be a homomorphism. Then, upto conjugation of homomorphisms, one of the following  assertions holds:
\begin{enumerate}
\item $\psi$ is abelian,
\item $n=m$ and $\psi = \pi$ or $\theta$, 
\item $n=m=6$ and $\psi = \nu  \pi$ or $\nu  \theta$.
\end{enumerate}
\end{theorem}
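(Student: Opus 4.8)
The plan is to reduce a homomorphism $\psi : VT_n \to S_m$ to a homomorphism from a symmetric group and then invoke Proposition \ref{Homorphisms-Sn-Sm}. The key structural input is the semidirect product decomposition $VT_n = KT_n \rtimes S_n$ from Section \ref{section-prelim}, where $S_n$ is identified with $\lambda(S_n) = \langle \rho_1, \dots, \rho_{n-1}\rangle$ and $KT_n$ is the irreducible right-angled Coxeter group described in Theorem \ref{Ktn-right-angled-Coxeter} and Corollary \ref{irred-raag}. First I would analyse the restriction $\psi|_{KT_n}$. Since $KT_n$ is generated by the conjugacy class $\mathcal{S} = \{\alpha_{i,j}\}$ under the transitive $S_n$-action (Theorem \ref{KTn-generators}, Remark \ref{rephrasing-action}), the images $\psi(\alpha_{i,j})$ all have the same cycle type in $S_m$, and each is an involution or trivial by relation (1) of Theorem \ref{Ktn-right-angled-Coxeter}. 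The crucial claim is that $\psi|_{KT_n}$ is trivial, i.e. $KT_n \le \Ker(\psi)$.

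To prove that claim, I would argue by contradiction: suppose some $\psi(\alpha_{i,j}) \ne 1$. Then, by transitivity, $\psi(\alpha_{k,l}) \ne 1$ for all $k \ne l$. Relation (2) of Theorem \ref{Ktn-right-angled-Coxeter} says $\alpha_{i,j}$ and $\alpha_{k,l}$ commute whenever $i,j,k,l$ are distinct, while the non-commuting pairs are exactly those sharing an index. One then observes that $\psi(\alpha_{1,2})$ must fail to commute with $\psi(\alpha_{2,3})$ — otherwise, combined with the $S_n$-conjugation, one would force $\psi(KT_n)$ to be abelian, and then $\psi$ would factor through the abelianisation of $VT_n$; a short separate computation (the abelianisation of $VT_n$ is small, coming from relations \eqref{1}--\eqref{7}) shows this lands in case (1). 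So $\psi$ restricted to the parabolic subgroup $\langle \alpha_{1,2}, \alpha_{2,3}, \alpha_{3,4}, \dots\rangle$, which by Corollary \ref{tn-in-ptn} is the twin group $T_n$, is a non-abelian homomorphism $T_n \to S_m$. Here I would use the hypothesis $n \ge 5$ together with the structure of $T_n$ as a right-angled Coxeter group on the path graph: its images of adjacent generators are non-commuting involutions, hence generate a dihedral group, and chasing the constraints $m \le n$ forces $\psi(T_n)$ to involve at least a transposition on a set of size $\ge 4$, eventually contradicting $m \le n$ via a counting/support argument on the orbits of $\psi(\alpha_{i,j})$ — essentially because $KT_n$ has $n(n-1)$ generators with a highly connected non-commutation graph, there is no room inside $S_m$ with $m \le n$. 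This is the step I expect to be the main obstacle, and it is where the bulk of the paper's technical lemmas (presumably in Section \ref{homo from sn to vtn}, but the relevant direction here) would be needed; I would isolate it as a lemma stating that every homomorphism $KT_n \to S_m$ with $m \le n$, $n \ge 5$, has abelian (indeed trivial on the commutator) image.

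Once $KT_n \le \Ker(\psi)$ is established, $\psi$ factors through the quotient $VT_n / KT_n \cong S_n$, i.e. $\psi = \phi \circ \theta$ for a unique homomorphism $\phi : S_n \to S_m$. Now apply Proposition \ref{Homorphisms-Sn-Sm} to $\phi$: up to conjugation, either $\phi$ is abelian — whence $\psi = \phi\theta$ is abelian, giving case (1) — or $n = m$ and $\phi = \id$, giving $\psi = \theta$, or $n = m = 6$ and $\phi = \nu$, giving $\psi = \nu\theta$. To recover the options $\psi = \pi$ and $\psi = \nu\pi$ as well, I would note that the above analysis was not quite symmetric: the claim "$\psi|_{KT_n}$ trivial" should be replaced by the dichotomy that $\psi$ kills either $KT_n$ or else the "other" complement. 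More carefully, I would run the argument on the short exact sequence $1 \to PVT_n \to VT_n \to S_n \to 1$ with the surjection $\pi$ as well: a non-abelian $\psi$ must be non-trivial on one of the two natural normal subgroups, and the same rigidity argument (using that $PVT_n$ is an irreducible right-angled Artin group by \cite{NaikNandaSingh2020}, with the same kind of highly connected defining graph) shows $\psi$ must kill $PVT_n$ or $KT_n$. Factoring through $\pi$ resp. $\theta$ and re-applying Proposition \ref{Homorphisms-Sn-Sm} then yields exactly the list in cases (2) and (3). Finally I would check that the listed homomorphisms are genuinely pairwise non-conjugate (e.g. $\pi$ and $\theta$ differ on $s_1$: $\pi(s_1) = \tau_1 \ne 1 = \theta(s_1)$, and conjugation preserves whether $s_1 \mapsto 1$), completing the classification.
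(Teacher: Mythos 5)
There is a genuine gap, and in fact the central intermediate claim of your first half is false. You propose to show that $\psi|_{KT_n}$ is trivial, isolating as a lemma that every homomorphism $KT_n \to S_m$ with $m \le n$, $n \ge 5$ has abelian image. But the restriction of $\pi$ to $KT_n$ is a counterexample: $\pi(\alpha_{i,j}) = (\tau_{j-1}\cdots\tau_{i+1})\tau_i(\tau_{i+1}\cdots\tau_{j-1})$ is the transposition $(i,j)$, so $\pi|_{KT_n} : KT_n \to S_n$ is a surjection onto a non-abelian group. No counting or support argument can establish the lemma, since it would have to rule out this map as well. You partly sense this (``the above analysis was not quite symmetric'') and propose a patch: a non-abelian $\psi$ must kill either $KT_n$ or $PVT_n$. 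That dichotomy is consistent with the final answer, but you give no argument for it beyond invoking ``the same rigidity argument,'' which is precisely the false lemma; as written the proof does not go through. Your last step (distinguishing $\pi$ from $\theta$ up to conjugacy via $\psi(s_1)$) is fine but does not repair the gap.

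The paper's route avoids this entirely and is much lighter: it precomposes with the splitting $\lambda : S_n \to VT_n$ and applies Proposition \ref{Homorphisms-Sn-Sm} to $\psi\lambda$, which pins down $\psi(\rho_i)$ up to conjugation (abelian, $\tau_i$, or $\nu(\tau_i)$). It then determines $\psi(s_1)$ directly from the defining relations: $s_1\rho_k = \rho_k s_1$ for $k \ge 3$ forces $\psi(s_1)$ into $\C_{S_n}(\langle \tau_3,\dots,\tau_{n-1}\rangle) = \langle \tau_1\rangle$, leaving only $\psi(s_1)=1$ (giving $\theta$) or $\psi(s_1)=\tau_1$ (giving $\pi$), and the mixed relation \eqref{7} propagates this choice to all $s_i$. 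If you want to salvage your strategy, the correct statement to aim for is not that $KT_n \le \Ker\psi$, but that after the conjugation normalising $\psi(\rho_i)=\tau_i$, the element $\psi(s_1)$ is constrained by the commuting relations with the $\rho_k$ --- which is exactly the paper's computation.
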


\begin{proof}
Consider the composition $S_n \stackrel{\lambda}{\longrightarrow}  VB_n  \stackrel{\psi}{\longrightarrow}  S_m.$ By Proposition \ref{Homorphisms-Sn-Sm}, one of the following holds for $\psi  \lambda$: 
\begin{enumerate}
\item $\psi  \lambda$ is abelian,
\item $n=m$ and $\psi  \lambda = \id$,
\item $n=m=6$ and $\psi  \lambda = \nu$.
\end{enumerate}
\medskip

Case (1): Let $\psi  \lambda$ be abelian. We claim that there exists $w \in S_m$ such that for $\psi  \lambda(\tau_i)=w$ for all $1\le i \le n-1$. Suppose on the contrary that
there exist $i$ and $w_1 \neq w_2$ in $S_m$ such that $\psi  \lambda(\tau_i)=w_1$ and $\psi  \lambda(\tau_{i+1})=w_2$. The braid relation $\tau_i\tau_{i+1}\tau_i = \tau_{i+1}\tau_i\tau_{i+1}$ gives $w_1w_2w_1= w_2w_1w_2$. Since $\psi  \lambda$ is abelian, we must have $w_1 = w_2$, a contradiction. This proves the claim. Next, we find $\psi(s_i)$. The relation $\rho_i s_{i+1} \rho_i = \rho_{i+1} s_i \rho_{i+1}$ gives $\psi(s_i) = \psi(s_{i+1})=z$ (say) for all $i$. Finally, the relation $s_1 \rho_3 = \rho_3 s_1$ gives $zw=wz$, and hence $\psi$ is abelian. 
\medskip

Case (2): Suppose that $n=m$ and $\psi  \lambda=\id$. In this case, we have $\psi(\rho_i) = \tau_i$ for all $1 \le i \le n-1$. Next, we need to find $\psi(s_i)$. Recall the relation $s_1 \rho_i= \rho_i s_1$ for $3 \le i \le n-1$. It follows that $\psi(s_1) \in \langle \tau_1 \rangle$, the centraliser of the subgroup $\langle \tau_3, \tau_4, \dots, \tau_{n-1} \rangle$ in $S_n$. Thus, we have either $\psi(s_1)=1$ or $\psi(s_1)=\tau_1$. If $\psi(s_1)=1$, then the relation $\rho_1 s_2 \rho_1 = \rho_2 s_1 \rho_2$ gives $\psi(s_2)=1$, and consequently $\psi(s_i)=1$ for all $i$. Thus, we obtain $\psi= \theta.$ And, if $\psi(s_1)=\tau_1$, then the  relation $\rho_1 s_2 \rho_1 = \rho_2 s_1 \rho_2$ gives $\tau_1 \psi(s_2) \tau_1= \tau_2 \tau_1 \tau_2 =\tau_1 \tau_2 \tau_1$. Thus, we get $\psi(s_2) = \tau_2$, and consequently $\psi(s_i)=\tau_i$ for all $i$. Thus, in this case $\psi= \pi.$
\medskip

Case (3): Suppose that $n=m=6$ and that $\psi  \lambda=\nu$. Then we have $\nu^{-1}  \psi  \lambda=\id.$ By Case (2), we have $\nu^{-1}  \psi= \pi$ or $\nu^{-1}  \psi = \theta$, and hence $\psi=\nu  \pi$ or $\psi=\nu  \theta.$
\end{proof}
\medskip

\section{Homomorphisms from $S_n$ to $VT_m$}\label{homo from sn to vtn}
This section occupies most of the remaining part of the paper. For notational convenience, for the rest of the paper, we set $K_n:=KT_n$ for each $n \ge 2$. Recall that $K_n$ is a right-angled Coxeter group with a Coxeter generating set $\mathcal{S} = \{ \alpha_{i,j} \mid 1\leq i \neq j \leq n \}$ and defining relations
\begin{enumerate}
\item $\alpha_{i,j}^2 = 1$ for all $1\leq i \neq j \leq n,$ and
\item $\alpha_{i,j} \alpha_{k,l} = \alpha_{k,l} \alpha_{i,j}$ for distinct integers $1\leq i, j, k, l \leq n$.
\end{enumerate}
\medskip

We have $VT_n = K_n \rtimes S_n$, where the conjugation action of $S_n$ on $K_n$ is given as
$$\rho \alpha_{i,j} \rho^{-1} = \alpha_{\rho(i), \rho(j)}$$
for all $1\leq i \neq j \leq n$ and $\rho\in S_n.$ 
\medskip

We begin by recalling some general results. The following three results are well-known \cite{MR1066460}.

\begin{lemma}\label{intersection of parabolic subgroups}
Let $(W, S)$ be a Coxeter system, and $X$ and $Y$ two subsets of $S$. Then $$W[X]\cap W[Y] = W[X\cap Y].$$ 
\end{lemma}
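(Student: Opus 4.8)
The plan is to prove the two inclusions separately. The inclusion $W[X \cap Y] \subseteq W[X] \cap W[Y]$ is immediate, since $X \cap Y \subseteq X$ and $X \cap Y \subseteq Y$ force $W[X\cap Y] \subseteq W[X]$ and $W[X\cap Y] \subseteq W[Y]$. So the whole content is the reverse inclusion $W[X] \cap W[Y] \subseteq W[X \cap Y]$, which I would prove by induction on the Coxeter length $\ell = \ell_W$.

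The engine of the induction is a claim about left descent sets: for $w \in W$ set $D(w) = \{ s \in S \mid \ell(sw) < \ell(w) \}$; then $w \in W[X]$ implies $D(w) \subseteq X$. To prove this I would invoke two standard facts about Coxeter systems (both in \cite{MR1066460}): that $(W[X], X)$ is itself a Coxeter system whose length function is the restriction of $\ell_W$, and the exchange condition. The first fact lets me pick a reduced expression $w = s_1 s_2 \cdots s_k$ with every $s_i \in X$ that is still reduced as a word in $W$; then for $s \in D(w)$ the exchange condition produces an index $i$ with $sw = s_1 \cdots \widehat{s_i} \cdots s_k$, whence $s = s_1 \cdots s_{i-1} s_i s_{i-1} \cdots s_1 \in W[X]$, and since $s$ is a generator this forces $s \in X$ (using $S \cap W[X] = X$). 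The same argument gives $D(w) \subseteq Y$ whenever $w \in W[Y]$.

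Granting the claim, the induction is short. If $w \in W[X] \cap W[Y]$ and $\ell(w) = 0$, then $w = 1 \in W[X \cap Y]$. If $\ell(w) > 0$, pick $s \in D(w)$, which is nonempty; by the claim $s \in X \cap Y$, and $sw$ lies in $W[X] \cap W[Y]$ with $\ell(sw) < \ell(w)$, so by the inductive hypothesis $sw \in W[X \cap Y]$, and therefore $w = s(sw) \in W[X \cap Y]$.

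I expect the only real obstacle to be the bookkeeping around the descent-set claim — in particular, ensuring that the expression chosen inside $W[X]$ is genuinely reduced in $W$, which is exactly what the compatibility of the two length functions provides. An alternative, less self-contained route would be to define $\operatorname{supp}(w)$ as the set of generators occurring in a reduced expression of $w$, use Tits' theorem that any two reduced words are braid-equivalent to see that $\operatorname{supp}(w)$ is well defined, observe that $w \in W[Z] \iff \operatorname{supp}(w) \subseteq Z$, and then intersect over $Z = X, Y$; I would prefer the inductive argument above since it requires only the exchange condition.
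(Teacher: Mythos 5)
Your proof is correct. Note, though, that the paper does not prove this lemma at all: it is quoted as one of three ``well-known'' facts with a citation to Humphreys, and your descent-set induction (via the compatibility of $\ell_{W[X]}$ with $\ell_W$ and the exchange condition) is precisely the standard argument found in that reference, so there is nothing to reconcile with the paper's text. The only point deserving a word of care is the step $S \cap W[X] = X$, which you invoke to conclude $s \in X$ from $s \in W[X]$; it follows from the same length compatibility (an element of $S$ lying in $W[X]$ has $\ell_{W[X]} = 1$, hence equals some generator in $X$), so your argument is complete as written.
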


\begin{lemma}\label{amalgamated product of subgroups}
Let $(W, S)$ be a Coxeter system. Let $X$ and $Y$ be two subsets of $S$ such that $S = X \cup Y$ and the exponents $m_{s,t} = \infty$ for each $s\in X\setminus Y$ and $t\in Y\setminus X$. Then $$W = W[X]*_{W[X\cap Y]} W[Y].$$ 
\end{lemma}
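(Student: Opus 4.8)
The plan is to prove the statement by comparing group presentations. Write $m_{s,t}\in\{2,3,\dots,\infty\}$ for the Coxeter exponents of $(W,S)$, so that $W$ is presented by the generating set $S$ together with the relations $s^2=1$ for $s\in S$ and $(st)^{m_{s,t}}=1$ for each pair of distinct $s,t\in S$ with $m_{s,t}<\infty$. First I would recall the classical fact (see \cite{MR1066460}) that for every subset $Z\subseteq S$ the pair $(W[Z],Z)$ is again a Coxeter system whose Coxeter matrix is the restriction of $(m_{s,t})$ to $Z$; applying this to $X$, $Y$, and $X\cap Y$ and combining it with Lemma \ref{intersection of parabolic subgroups}, the subgroup $W[X\cap Y]$ sits as a standard parabolic inside each of $W[X]$ and $W[Y]$, and these two copies intersect inside $W$ in exactly $W[X\cap Y]$. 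Hence the amalgam $G:=W[X]*_{W[X\cap Y]}W[Y]$ is well defined; since the inclusions $W[X]\hookrightarrow W$ and $W[Y]\hookrightarrow W$ restrict to the same map on $W[X\cap Y]$, its universal property produces a homomorphism $\phi\colon G\to W$, and $\phi$ is surjective because its image contains $X\cup Y=S$.

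The second step is to write down a presentation of $G$. Because $X\cap Y$ is a genuine common subset of the generating sets $X$ and $Y$ --- not merely an abstractly isomorphic subgroup --- the amalgamated product of $W[X]$ and $W[Y]$ over $W[X\cap Y]$ is presented simply by the union of the two Coxeter presentations: generators $X\cup Y=S$, and relations $s^2=1$ for $s\in S$ together with $(st)^{m_{s,t}}=1$ whenever distinct $s,t$ lie both in $X$ or both in $Y$. I would then make the one essential observation: if $s\neq t$ in $S$ and the pair $\{s,t\}$ is contained in neither $X$ nor $Y$, then since $S=X\cup Y$ one of $s,t$ lies in $X\setminus Y$ and the other in $Y\setminus X$, whence $m_{s,t}=\infty$ by hypothesis --- so such a pair contributes no relation to $W$ either. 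Consequently the presentation of $G$ is, verbatim and on the same generating set $S$, the Coxeter presentation of $W$; thus $\phi$ is an isomorphism, and since $\phi$ restricts to the canonical inclusions on the two factors, this is precisely the internal decomposition $W=W[X]*_{W[X\cap Y]}W[Y]$.

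The step I expect to require the most care is the bookkeeping that $W[X]*_{W[X\cap Y]}W[Y]$ really admits the union of the two Coxeter presentations with no residual amalgamating relations --- this hinges on $X\cap Y$ being a common subset of the two generating sets and on $W[X\cap Y]$ being generated by it --- together with a clean invocation of the fact that a standard parabolic of a Coxeter group is again a Coxeter system with the restricted exponents. The rest is the elementary pair-by-pair comparison above. As an alternative one could instead build an action of $W$ on the Bass--Serre tree of the conjectured splitting and apply Bass--Serre theory, but the presentation comparison is more direct and is the approach I would take.
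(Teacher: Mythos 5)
The paper gives no proof of Lemma \ref{amalgamated product of subgroups}: it is stated as one of three ``well-known'' facts with a citation to \cite{MR1066460}, so there is no in-paper argument to compare yours against. Your presentation-comparison proof is correct and is the standard way to establish this visual-splitting result for Coxeter (and graph-product) groups: the two facts you lean on --- that a standard parabolic $(W[Z],Z)$ is a Coxeter system with the restricted exponent matrix, and that an amalgam of two groups over a subgroup generated by a literally common subset of their generating sets is presented by the union of the two presentations with no residual amalgamating relations --- are both legitimate, and the pair-by-pair bookkeeping (any pair $\{s,t\}$ contained in neither $X$ nor $Y$ has one element in $X\setminus Y$ and one in $Y\setminus X$, hence $m_{s,t}=\infty$ and contributes no relation on either side) is exactly the point where the hypothesis enters. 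One small remark: the appeal to Lemma \ref{intersection of parabolic subgroups} is not strictly needed as an input, since the isomorphism $\phi$ from the abstract amalgam onto $W$ already forces $W[X]\cap W[Y]=W[X\cap Y]$ as a consequence; but invoking it up front to justify the internal reading of the decomposition is harmless and arguably cleaner.
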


A \textit{cyclic permutation} of a word  $w=x_{i_1} x_{i_2}\dots x_{i_k}$ (not necessarily reduced) is a word $w'$ (not necessarily distinct from $w$) of the form $x_{i_t}x_{i_{t+1}} x_{i_{t+2}}\dots x_{i_k} x_{i_1} x_{i_2}\cdots x_{i_{t-1}}$ for some $1\leq t \leq k$. A word is called \textit{cyclically reduced} if each of its cyclic permutation is reduced. It is immediate that a cyclically reduced word is reduced,  but the converse is not true.

\begin{lemma}\label{involutions-in-RACG}
Let $W$ be a right-angled Coxeter group and $g\in W$ a cyclically reduced word. Then $g$ is of order two if and only if $[s, t] = 1$ for every pair of generators $s$ and $t$ occurring in $g$.
\end{lemma}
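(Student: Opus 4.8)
The plan is to prove the two implications separately. The forward one is routine: if every pair of generators occurring in $g$ commutes, let $T\subseteq S$ be the set of those generators, so that $g\in W[T]$; since $W$ is right-angled and the elements of $T$ pairwise commute, $W[T]$ is an elementary abelian $2$-group, and as $g$ is a nonempty reduced word (being cyclically reduced it is in particular reduced), it represents a nontrivial element, of order exactly two. Only ordinary reducedness is used here, and indeed $sts$ with $m_{s,t}=\infty$ is reduced and has order two but is not cyclically reduced, so cyclic reducedness is genuinely needed for the converse.

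For the converse I would induct on $k:=\ell(g)$, the case $k=1$ being trivial. Assume $g^2=1$ and write $g=x_1x_2\cdots x_k$ as a cyclically reduced word in the Coxeter generators. The length-$2k$ word $x_1\cdots x_kx_1\cdots x_k$ represents $g^2=1$, hence is not reduced; by the standard description of reduced words in a right-angled Coxeter group (a word is non-reduced exactly when it has two occurrences of a generator with every intervening letter commuting with it), there are positions $\alpha<\beta$ carrying a common label $t$ with all letters strictly between them commuting with $t$. Such a pair cannot lie inside a single copy of $g$, for then that copy of $g$ would itself fail to be reduced; so $\alpha$ lies at some position $p$ of the first copy and $\beta$ at some position $q$ of the second. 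For the same reason no intervening letter equals $t$: an intervening occurrence of $t$ inside the first copy would, with $\alpha$, violate reducedness of $g$, and one inside the second copy would, with $\beta$. Since the letters between $\alpha$ and $\beta$ are $x_{p+1},\dots,x_k,x_1,\dots,x_{q-1}$, this last fact forces $q\le p$.

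Suppose $q<p$. The cyclic rotation $x_px_{p+1}\cdots x_kx_1\cdots x_{p-1}$ of $g$ then has the form $t\,B\,t\,A$, where $B=x_{p+1}\cdots x_kx_1\cdots x_{q-1}$ is exactly the block of letters lying strictly between $\alpha$ and $\beta$ and hence commutes entirely with $t$; moving the leading $t$ rightward past $B$ brings the two occurrences of $t$ together and cancels them, so this word of length $k$ represents an element of length at most $k-2$, contradicting cyclic reducedness. Hence $q=p$, so every letter of $g$ other than $x_p$ commutes with $s:=x_p$ and none equals $s$; in particular $s$ occurs exactly once in $g$. Passing to the cyclic rotation of $g$ that begins with $x_p$ (a conjugate of $g$ with the same set of generators), we may assume $g=sy$ with $s\notin\operatorname{supp}(y)$ and $s$ commuting with every letter of $y$. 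Then $(sy)^2=s^2y^2=y^2$, so $y^2=1$; moreover $y$ is cyclically reduced, since every cyclic rotation of $y$, with the letter $s$ reinserted, is a cyclic rotation of $g$ and hence reduced, and deleting the letter $s$ --- which commutes with everything --- preserves reducedness. As $\ell(y)=k-1$, the induction hypothesis gives that the generators occurring in $y$ pairwise commute, and since $s$ commutes with all of them, so do all the generators occurring in $g$.

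The delicate part is entirely in the converse: the argument constantly passes between a word and its cyclic rotations, and the step forcing $q=p$ is exactly where cyclic reducedness --- not merely reducedness --- is essential. Everything else rests on the single combinatorial fact about reduced words in right-angled Coxeter groups quoted above. Alternatively one could run the converse through the amalgamated-product decompositions of parabolic subgroups (Lemmas~\ref{intersection of parabolic subgroups} and \ref{amalgamated product of subgroups}) together with the fact that a finite-order element of an amalgamated product is conjugate into a factor, but controlling $\operatorname{supp}(g)$ under such conjugations would again require essentially the same cyclic-reduction bookkeeping, so I expect the combinatorial induction above to be the most economical route.
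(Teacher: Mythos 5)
Your proof is correct, but there is nothing in the paper to compare it against: the paper does not prove this lemma at all, listing it alongside Lemmas~\ref{intersection of parabolic subgroups} and~\ref{amalgamated product of subgroups} as ``well-known'' with a citation to \cite{MR1066460}. What you have supplied is a complete, self-contained combinatorial argument. The forward direction is fine (modulo the implicit convention, also implicit in the lemma as stated, that $g$ is a nonempty word, since the identity has order one while the commutation condition is vacuous). The converse correctly reduces everything to the single standard fact that a word in a right-angled Coxeter group fails to be reduced exactly when it contains two occurrences of a generator $t$ with all intervening letters commuting with $t$; applying this to the non-reduced word $g^2$, your case analysis is sound: a cancelling pair cannot sit inside one copy of $g$ (reducedness of $g$), no intervening letter can equal $t$ (same reason, applied within each copy), the case $q<p$ is killed by cyclic reducedness via the rotation $tBtA$, and the surviving case $q=p$ shows $x_p$ occurs once and commutes with everything else, setting up the induction on length. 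The one step worth stating a touch more carefully is that deleting the occurrence of $s$ from a reduced rotation of $g$ leaves a reduced word: this holds precisely because any cancelling pair in the shortened word would have label $u$ in the support of $y$, and $s$ commutes with all such $u$, so reinserting $s$ cannot destroy the pair --- which is exactly the justification you sketch. Your closing remark is also apt: one could instead conjugate $g$ into a finite (hence parabolic) subgroup using the amalgam decompositions, but tracking the support under that conjugation amounts to the same bookkeeping, so the direct induction is the cleaner route.
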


The following  result on normal form for amalgamated free products is due to Serre \cite[Section 1.1, Theorem 1]{Serre}.  

\begin{lemma}\label{Normal-Form-Amalgamated-Free-Product}
Let $G_1, G_2, \ldots, G_r, H$ be a collection of groups such that $H$ is a subgroup of $G_j$ for each $1\leq j \leq r$. Consider the amalgamated free product $G = G_1 \ast_H G_2 \ast_H \cdots \ast_H G_r$. For each $1\leq j \leq r$, choose a set $T_j$ of representatives of left cosets of $H$ in $G_j$ such that $T_j$ contains the identity element 1. Then each element $g \in G$ can be written in a unique way in the form $g = t_1 t_2 \ldots t_l h$ such that:
\begin{itemize}
\item[(1)] $h \in H$ and, for each $i\in \{1, 2, \dots, l\}$, there exists $j = j(i) \in \{1, 2, \dots, r\}$ such that $t_i \in T_j \setminus \{1 \}$,
\item[(2)] $j(i)\neq j(i+1)$ for all $i\in \{1, 2, \dots, l-1\}$.
\end{itemize}
In particular, we have $g \in H$ if and only if $l = 0$ and $g = h$. 
\end{lemma}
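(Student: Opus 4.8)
The plan is to follow the classical argument of \cite{Serre}. Write $\Omega$ for the set of all tuples $\omega=(t_1,\dots,t_l;h)$ with $l\ge0$, $h\in H$, and $t_i\in T_{j(i)}\setminus\{1\}$ for indices $j(i)$ satisfying $j(i)\ne j(i+1)$; call such a tuple \emph{reduced}, and let $\Phi\colon\Omega\to G$ be the evaluation map $\Phi(t_1,\dots,t_l;h)=t_1t_2\cdots t_l h$. Existence of the normal form is the surjectivity of $\Phi$, and uniqueness is its injectivity.

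For surjectivity, every $g\in G$ is a product $g_1g_2\cdots g_n$ of elements each lying in some factor $G_{j}$, and I would induct on $n$. The case $n=0$ gives $\omega=(\,;1)$; for $n\ge1$, write $g_1\cdots g_{n-1}=\Phi(t_1,\dots,t_l;h)$, so that $g=t_1\cdots t_l\,(hg_n)$ with $hg_n\in G_{j_n}$, then use $G_{j_n}=\bigsqcup_{t\in T_{j_n}}tH$ to write $hg_n=t\,h''$ with $t\in T_{j_n}$, $h''\in H$, append, and---when $j(l)=j_n$---merge $t$ into $t_l$, possibly shortening the tuple if the product lands in $H$. This step is routine.

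For injectivity the plan is the van der Waerden trick: produce an action of $G$ on $\Omega$ under which $g\cdot(\,;1)$ is a reduced tuple evaluating to $g$. First, for $h_0\in H$ define $h_0\star\omega$ by pushing $h_0$ rightwards through $\omega$---write $h_0t_1=t_1'h_1$, $h_1t_2=t_2'h_2,\ \dots,\ h_{l-1}t_l=t_l'h_l$ as coset representative times $H$-element, and set $h_0\star\omega:=(t_1',\dots,t_l';h_lh)$; since no $t_i$ lies in $H$ none of the $t_i'$ equals $1$, the labels are preserved, $\star$ is an action of $H$ on $\Omega$, and $\Phi(h_0\star\omega)=h_0\Phi(\omega)$. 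Next, for each $j$ define an action of $G_j$ on $\Omega$: decomposing $g\in G_j$ as $g=t_0h_0$ with $t_0\in T_j$, $h_0\in H$, let $g\cdot\omega$ be the reduced tuple evaluating to $g\,\Phi(\omega)$, obtained by combining $g$ with the leading syllable $t_1$ when $l\ge1$ and $j(1)=j$ (shortening if the combination lands in $H$), and otherwise by prepending $t_0$ (when $t_0\ne1$) and absorbing $h_0$ via $\star$; by construction $\Phi(g\cdot\omega)=g\,\Phi(\omega)$ in every case. Since each of these $G_j$-actions restricts on $H$ to the single map $\star$, the universal property of $G=G_1\ast_H\cdots\ast_H G_r$ assembles them into an action of $G$ on $\Omega$ still satisfying $\Phi(g\cdot\omega)=g\,\Phi(\omega)$. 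Unwinding the definitions shows $\Phi(\omega)\cdot(\,;1)=\omega$ for every $\omega\in\Omega$, whence $\Phi(\omega)=\Phi(\omega')$ forces $\omega=\Phi(\omega)\cdot(\,;1)=\Phi(\omega')\cdot(\,;1)=\omega'$. The closing statement ($g\in H\iff l=0$) then follows immediately, since $(\,;g)$ is the unique reduced tuple evaluating to a given $g\in H$.

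The only genuinely delicate point will be checking that each $G_j$ really acts on $\Omega$, i.e.\ $(gg')\cdot\omega=g\cdot(g'\cdot\omega)$ for $g,g'\in G_j$: this is a finite case analysis, organized by whether the relevant combinations of elements land inside or outside $H$, resting entirely on the uniqueness of the decomposition $G_j=\bigsqcup_{t\in T_j}tH$ together with the already-established fact that $\star$ is an action of $H$; everything else is bookkeeping. (Alternatively, one can present $G$ as the iterated amalgam $(\cdots(G_1\ast_H G_2)\ast_H\cdots)\ast_H G_r$ and induct on $r$ using the classical two-factor normal form, at the cost of tracking how syllables refine at each stage.)
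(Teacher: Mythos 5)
Your proposal is correct: the surjectivity induction, the $H$-action $\star$ by pushing elements rightward, the $G_j$-actions glued via the universal property of the amalgam, and the identity $\Phi(\omega)\cdot(\,;1)=\omega$ together give exactly the Artin--van der Waerden proof of the normal form. The paper does not prove this lemma but cites Serre's \emph{Arbres, amalgames, $SL_2$} for it, and your argument is essentially the proof given in that reference, so there is nothing to add beyond the routine case-check of the $G_j$-action axiom that you already flag.
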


Given a group $G$ and an automorphism $\phi$ of $G$, let $$G^\phi= \{ g \in G \mid \phi(g)=g \}$$ denotes the group of fixed-points of $\phi$. The following lemma is due to Bellingeri and Paris \cite[Lemma 3.6]{BellingeriParis2020}.

\begin{lemma}\label{Fixed-Point-Lemma}
Let $H$ be a common subgroup of groups $G_1$ and $G_2$ and $G = G_1 \ast_H G_2$ their amalgamated free product. Let $\phi : G \rightarrow G$ be an automorphism of order two such that $\phi (G_1) = G_2$ and $\phi (G_2) = G_1$. Then $G^{\phi}$ is a subgroup of $H$.
\end{lemma}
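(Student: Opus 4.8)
The plan is to invoke the normal form theorem for amalgamated free products, Lemma \ref{Normal-Form-Amalgamated-Free-Product} with $r = 2$, together with the fact that $\phi$ interchanges the two factors of $G = G_1 \ast_H G_2$; the key point is that no element of $G$ of positive syllable length can be fixed by such a $\phi$.

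First I would record two elementary facts. Inside the amalgamated product one has $G_1 \cap G_2 = H$, so from $\phi(G_1) = G_2$ and $\phi(G_2) = G_1$ we get $\phi(H) \subseteq \phi(G_1) \cap \phi(G_2) = G_1 \cap G_2 = H$; applying $\phi$ once more (using $\phi^2 = \id$) gives $\phi(H) = H$. Hence $\phi$ maps $H$ to $H$ and, being a bijection of order two, maps each complement $G_i \setminus H$ bijectively onto $G_{i'} \setminus H$, where $\{i, i'\} = \{1, 2\}$.

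Next I would take $g \in G^{\phi}$ and argue by contradiction that its syllable length is zero. Write $g = t_1 t_2 \cdots t_l h$ in the normal form of Lemma \ref{Normal-Form-Amalgamated-Free-Product}, with $h \in H$, each $t_i \in T_{j(i)} \setminus \{1\}$ (so $t_i \in G_{j(i)} \setminus H$), and $j(i) \neq j(i+1)$; assume $l \geq 1$. Applying $\phi$ and using the previous step, $g = \phi(g) = \phi(t_1)\phi(t_2)\cdots\phi(t_l)\,\phi(h)$ with $\phi(t_i) \in G_{\overline{j(i)}} \setminus H$, $\phi(h) \in H$, and the types $\overline{j(i)}$ again alternating. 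Rewriting this product into normal form by pushing the $H$-parts successively to the right — the standard procedure from the proof of the normal form theorem, which produces exactly $l$ syllables since each intermediate factor is again outside $H$ — yields the (unique) normal form of $g$, but with leading syllable in $G_{\overline{j(1)}}$. Comparing with the normal form we started from, the leading syllable $t_1$ then lies simultaneously in $G_{j(1)}$ and in $G_{\overline{j(1)}}$, hence in $G_1 \cap G_2 = H$, contradicting $t_1 \notin H$. Therefore $l = 0$ and $g = h \in H$, which is the assertion $G^{\phi} \leq H$.

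I do not expect a genuine obstacle: the argument rests entirely on the normal form theorem and the single observation that $\phi$ swaps the factors while fixing $H$ setwise. The one spot that asks for a little care is the rewriting step, where one must be sure that the factor containing the leading syllable is a well-defined invariant of the element — but that is exactly the uniqueness assertion of Lemma \ref{Normal-Form-Amalgamated-Free-Product}, so nothing new is needed. If one prefers to bypass normal forms, the same idea works directly: for $g \notin H$ write it as a reduced word $g = g_1 \cdots g_n$ alternating between $G_1 \setminus H$ and $G_2 \setminus H$; then $g_1 \cdots g_n\, \phi(g_n)^{-1} \cdots \phi(g_1)^{-1}$ is a reduced word of length $2n \geq 2$, since the new junction $g_n\phi(g_n)^{-1}$ joins syllables from the two distinct factors, yet it equals $g\phi(g)^{-1} = 1$, contradicting the nontriviality of reduced words of positive length.
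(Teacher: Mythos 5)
Your argument is correct. Note that the paper itself gives no proof of this lemma: it is quoted verbatim from Bellingeri--Paris \cite[Lemma 3.6]{BellingeriParis2020}, so there is no in-paper proof to compare against. Your proposal is a valid self-contained derivation from Lemma \ref{Normal-Form-Amalgamated-Free-Product}: the preliminary observation that $\phi(H)=H$ (via $G_1\cap G_2=H$ and bijectivity of $\phi$) is what guarantees that $\phi$ sends $G_i\setminus H$ to $G_{i'}\setminus H$, and then the uniqueness of the normal form forces the syllable length of a fixed point to be zero, since the type of the leading syllable would have to be both $j(1)$ and $\overline{j(1)}$ (these are genuinely different, as $T_1\setminus\{1\}\subseteq G_1\setminus H$ and $T_2\setminus\{1\}\subseteq G_2\setminus H$ are disjoint). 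The rewriting step is handled correctly: multiplying an element of $G_k\setminus H$ by an element of $H$ stays in $G_k\setminus H$, so the syllable count does not collapse. Your alternative argument at the end is in fact the cleaner of the two, since it avoids transversals entirely: for $g\notin H$ with $\phi(g)=g$, the word $g_1\cdots g_n\,\phi(g_n)^{-1}\cdots\phi(g_1)^{-1}$ is reduced of length $2n\geq 2$ (the middle junction joins syllables from distinct factors), yet represents $g\,\phi(g)^{-1}=1$, which is impossible. Either version suffices.
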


We also need the following result  \cite[Lemma 3.9]{BellingeriParis2020}.

\begin{lemma}\label{automorphism-of-amalgamated-products}
Let $H$ be a common subgroup of groups $G_1$ and $G_2$ and $G = G_1 \ast_H G_2$ their amalgamated free product. Let $\phi: G\rightarrow G$ be an automorphism of order two such that $\phi(G_1) = G_2$ and $\phi(G_2) = G_1$. Let $x \in G$ such that $\phi(x) = x^{-1}$. Then there exist $y \in G$ and $z \in H$ such that $\phi(z) = z^{-1}$ and $x = y z \phi(y)^{-1}$.
\end{lemma}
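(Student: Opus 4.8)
The plan is to argue by induction on the syllable length $l$ of $x$ (i.e.\ the length $l$ of its normal form) with respect to the decomposition $G = G_1 \ast_H G_2$, using the normal form of Lemma~\ref{Normal-Form-Amalgamated-Free-Product} with $r = 2$. Two preliminary observations set things up. Since $\phi$ is bijective and $H = G_1 \cap G_2$ inside $G$, we have $\phi(H) = \phi(G_1) \cap \phi(G_2) = H$, so $\phi$ restricts to an automorphism of $H$; in particular, for $t \in G$ one has $\phi(t) \in H$ if and only if $t \in H$. Also, writing $\sigma$ for the transposition of $\{1,2\}$, the hypothesis $\phi(G_1) = G_2$, $\phi(G_2) = G_1$ says $\phi(G_a) = G_{\sigma(a)}$ for $a \in \{1,2\}$.

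The base case $l = 0$ is immediate: then $x \in H$, and we take $y = 1$ and $z = x$, so that $\phi(z) = \phi(x) = x^{-1} = z^{-1}$ by hypothesis.

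Now let $l \geq 1$ and write $x = t_1 t_2 \cdots t_l h$ in normal form, with $t_i \in T_{j(i)} \setminus \{1\}$, $h \in H$, and $j(i) \neq j(i+1)$. The central step is to read off normal forms from both sides of $\phi(x) = x^{-1}$. On one hand, $\phi(x) = \phi(t_1)\phi(t_2)\cdots\phi(t_l)\,\phi(h)$ is an alternating product with $\phi(t_i) \in G_{\sigma(j(i))} \setminus H$ and $\phi(h) \in H$, so by the normal form theorem $\phi(x)$ has syllable length $l$ with index sequence $\big(\sigma(j(1)), \dots, \sigma(j(l))\big)$. On the other hand, rewriting $x^{-1} = h^{-1} t_l^{-1} \cdots t_1^{-1}$ into normal form by pushing the $H$-factors to the right (each rewritten representative staying nontrivial since $t_i \notin H$) shows that $x^{-1}$ has syllable length $l$ with index sequence $\big(j(l), j(l-1), \dots, j(1)\big)$. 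Comparing, $\phi(x) = x^{-1}$ forces $\sigma(j(i)) = j(l+1-i)$ for all $i$; in particular $\sigma(j(1)) = j(l)$, which rules out $l = 1$, so in fact $l \geq 2$. Put $x' = t_1^{-1}\, x\, \phi(t_1)$. From $\phi^2 = \id$ and $\phi(x) = x^{-1}$ we get $\phi(x') = \phi(t_1)^{-1} x^{-1} t_1 = (x')^{-1}$, so $x'$ satisfies the same hypothesis as $x$. Moreover $x' = t_2 t_3 \cdots t_{l-1}\,\big(t_l\, h\, \phi(t_1)\big)$, and since $h \in H$ and $\phi(t_1) \in G_{\sigma(j(1))} = G_{j(l)}$, the factor $t_l\, h\, \phi(t_1)$ lies in $G_{j(l)}$ alone; hence $x'$ is an alternating product of at most $l-1$ syllables and so has syllable length strictly less than $l$. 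By the inductive hypothesis there are $y' \in G$ and $z \in H$ with $\phi(z) = z^{-1}$ and $x' = y'\, z\, \phi(y')^{-1}$, whence $x = t_1\, x'\, \phi(t_1)^{-1} = (t_1 y')\, z\, \phi(t_1 y')^{-1}$; taking $y = t_1 y'$ and this same $z$ completes the induction.

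I expect the only real obstacle to be the normal-form bookkeeping in the inductive step: making sure the coset-rewriting produces nontrivial representatives, so that the syllable lengths and index sequences of $\phi(x)$ and $x^{-1}$ are computed correctly and may legitimately be compared, and then confirming that $x \mapsto x'$ strictly decreases the syllable length --- this is exactly where $\sigma(j(1)) = j(l)$ enters, guaranteeing that after stripping $t_1$ by conjugation the leftover first syllable merges into the last factor rather than creating a new syllable. The base case, the identity $\phi(x') = (x')^{-1}$, and the final reassembly $x = (t_1 y')\, z\, \phi(t_1 y')^{-1}$ are routine.
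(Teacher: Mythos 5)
Your proof is correct: the comparison of the normal forms of $\phi(x)$ and $x^{-1}$ (forcing $\sigma(j(i))=j(l+1-i)$, hence $\phi(t_1)\in G_{j(l)}$ so that conjugating by $t_1$ strictly shortens the syllable length) is exactly the right mechanism, and the base case, the identity $\phi(x')=(x')^{-1}$, and the reassembly $x=(t_1y')z\,\phi(t_1y')^{-1}$ all check out. Note that the paper does not prove this lemma but imports it from \cite{BellingeriParis2020} (Lemma 3.9), whose argument is essentially the same induction on syllable length that you give.
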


The next three subsections consisting of quite technical results occupy the rest of this section.
\medskip

\subsection{Technical results I} 
For the rest of this section, we set $$X_k = \{ \alpha_{i,j} \in \mathcal{S} \mid ~ i, j\notin \{k, k+1\} \}$$
for each $1\leq k\leq n-1$. Note that the conjugation action of $\rho_k$ is an order two automorphism of $K_n$ and its action on $K_n[X_k]$ is trivial for each  $1\leq k\leq n-1$. 

\begin{lemma}\label{relations-between-Xk}
Let $w\in \langle \rho_1, \ldots, \rho_{n-1}\rangle$ such that $w \rho_k w^{-1} = \rho_\ell$. Then $w X_k w^{-1} = X_\ell$, and consequently $w K_n[X_k] w^{-1} = K_n[X_\ell]$.
\end{lemma}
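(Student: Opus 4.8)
The plan is to exploit the explicit description of both the subset $X_k \subseteq \mathcal S$ and the conjugation action of the symmetric group on $\mathcal S$ recorded in Remark \ref{rephrasing-action}. Recall that under the action $\rho\cdot\alpha_{i,j}=\alpha_{\rho(i),\rho(j)}$, a word $w\in\langle\rho_1,\dots,\rho_{n-1}\rangle$ acts on $\mathcal S$ through its image $\bar w\in S_n$ under $\theta$. So the statement is really about permutations: if $\bar w\,\tau_k\,\bar w^{-1}=\tau_\ell$ in $S_n$, then $\bar w$ carries the set $\{k,k+1\}$ to the set $\{\ell,\ell+1\}$, and hence $\bar w$ carries $X_k$ to $X_\ell$.

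First I would observe that $\tau_k=(k,k+1)$, so $\bar w\,\tau_k\,\bar w^{-1}=(\bar w(k),\bar w(k+1))$. The hypothesis $\bar w\,\tau_k\,\bar w^{-1}=\tau_\ell=(\ell,\ell+1)$ therefore forces $\{\bar w(k),\bar w(k+1)\}=\{\ell,\ell+1\}$ as unordered pairs (two transpositions are equal iff their supports coincide). Next I would use the defining property of $X_k$: by definition $X_k=\{\alpha_{i,j}\in\mathcal S\mid i,j\notin\{k,k+1\}\}$. Applying $\bar w$ and using Remark \ref{rephrasing-action},
\[
\bar w X_k\bar w^{-1}=\{\alpha_{\bar w(i),\bar w(j)}\mid i,j\notin\{k,k+1\}\}=\{\alpha_{p,q}\in\mathcal S\mid p,q\notin\bar w(\{k,k+1\})\}=\{\alpha_{p,q}\in\mathcal S\mid p,q\notin\{\ell,\ell+1\}\}=X_\ell,
\]
where the second equality uses that $\bar w$ is a bijection of $\{1,\dots,n\}$ (so the condition ``$i,j$ avoid $\{k,k+1\}$'' transports to ``$\bar w(i),\bar w(j)$ avoid $\bar w(\{k,k+1\})$'') and the third uses the previous sentence. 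Since $\mathcal S$ is a Coxeter generating set of $K_n$ and conjugation by $w$ is an automorphism of $K_n$ permuting $\mathcal S$, it follows that $w\,K_n[X_k]\,w^{-1}=K_n[\bar w X_k\bar w^{-1}]=K_n[X_\ell]$, which is the second assertion.

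There is really no hard part here: the only thing to be careful about is the bookkeeping that conjugation in $\langle\rho_1,\dots,\rho_{n-1}\rangle$ acts on $\mathcal S$ only through the quotient $S_n$ (since the $\rho_i$ satisfy the braid and involution relations of $S_n$ and act on $\mathcal S$ exactly as $S_n$ does), so that the hypothesis, phrased in terms of $\rho_k$, can legitimately be replaced by the corresponding statement about the transpositions $\tau_k$. Once that translation is made, the argument is the one-line set-theoretic computation above together with the general fact that an automorphism of a Coxeter group permuting the generating set $\mathcal S$ sends the parabolic subgroup on a subset $X$ to the parabolic subgroup on the image of $X$.
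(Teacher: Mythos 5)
Your proof is correct, and it takes a slightly different route from the paper's. The paper argues by complementation: writing $\overline{X}_t=\mathcal S\setminus X_t$, it uses that conjugation by $w$ permutes $\mathcal S$ and sends $\overline{X}_k$ to $\overline{X}_\ell$, so that $\mathcal S=(wX_kw^{-1})\cup\overline{X}_\ell$ forces $wX_kw^{-1}=\mathcal S\setminus\overline{X}_\ell=X_\ell$; the key step $w\overline{X}_kw^{-1}=\overline{X}_\ell$ is left implicit there (it follows from the observation that $X_k$ is precisely the fixed-point set of $\widehat{\rho_k}$ on $\mathcal S$, a property transported by conjugation since $w\rho_kw^{-1}=\rho_\ell$). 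You instead compute $wX_kw^{-1}$ head-on from the explicit action $\rho\cdot\alpha_{i,j}=\alpha_{\rho(i),\rho(j)}$ of Remark \ref{rephrasing-action}, using that equality of the transpositions $\bar w\tau_k\bar w^{-1}$ and $\tau_\ell$ forces $\bar w(\{k,k+1\})=\{\ell,\ell+1\}$, and then transporting the index condition through the bijection $\bar w$. Your version makes every step explicit and is arguably more self-contained, at the cost of a little index bookkeeping; the paper's version is shorter on the page but quietly relies on the same underlying fact about how the support $\{k,k+1\}$ moves. The final passage to parabolic subgroups is identical in both: conjugation is an automorphism of $K_n$ carrying a generating set to a generating set. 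No gaps.
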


\begin{proof}
For all $1\leq t \leq n-1$, we set $\overline{X}_t = \mathcal{S} \setminus X_t$, the complement of $X_t$ in $\mathcal{S}$. Then, we have 
$$ \mathcal{S} = w \mathcal{S} w^{-1} = w ( X_k  \cup \overline{X}_k) w^{-1} =  (w X_k w^{-1})  \cup  (w \overline{X}_k w^{-1}) = (w X_k w^{-1})  \cup \overline{X}_\ell,$$
which gives $w X_k w^{-1} = \mathcal{S} \setminus \overline{X}_\ell = X_\ell$.
\end{proof}

\begin{proposition}\label{conjugation-action-of-rho1}
Let $1 \le k \le n-1$ be a fixed integer and $X$ be a subset of $\mathcal{S}$ invariant under the conjugation action of $\rho_k$. Then $$K_n[X]^{\widehat{\rho_k}} = K_n[X \cap X_k].$$
\end{proposition}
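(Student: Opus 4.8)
The plan is to prove the equality by induction on $N = |X \setminus X_k|$, the number of elements of the Coxeter generating set $X$ that are moved by $\widehat{\rho_k}$. Because $X$ is $\widehat{\rho_k}$-invariant, $\widehat{\rho_k}$ restricts to an automorphism of $K_n[X]$ whose order divides two; by Remark \ref{rephrasing-action} it sends $\alpha_{i,j} \mapsto \alpha_{\tau_k(i),\tau_k(j)}$ and, in particular, fixes $K_n[X \cap X_k]$ pointwise. The base case $N = 0$ is immediate: here $X \subseteq X_k$, so $\widehat{\rho_k}$ acts trivially on $K_n[X]$ and $K_n[X]^{\widehat{\rho_k}} = K_n[X] = K_n[X \cap X_k]$.

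For the inductive step, suppose $N > 0$ and choose any $\beta = \alpha_{i,j} \in X \setminus X_k$; set $\beta' = \widehat{\rho_k}(\beta)$. Then $\beta' \in X \setminus X_k$ as well, and $\beta' \neq \beta$ since $\widehat{\rho_k}$ moves $\beta$. The one point requiring a check is that $\beta$ and $\beta'$ do not commute: indeed, if $\{i,j\} = \{k,k+1\}$ then $\beta'$ has the same support $\{k,k+1\}$, while if exactly one of $i,j$ lies in $\{k,k+1\}$ then $\beta$ and $\beta'$ still share the index of $\beta$ lying outside $\{k,k+1\}$; either way their supports meet, so $m_{\beta,\beta'} = \infty$ in the right-angled Coxeter group $K_n$. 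Now put $X' = X \setminus \{\beta,\beta'\}$. This set is again $\widehat{\rho_k}$-invariant, it satisfies $X' \cap X_k = X \cap X_k$, and $|X' \setminus X_k| = N - 2$. Writing $X = (X' \cup \{\beta\}) \cup (X' \cup \{\beta'\})$, the only pair of generators lying in exactly one of the two pieces is $(\beta,\beta')$, and $m_{\beta,\beta'} = \infty$; hence Lemma \ref{amalgamated product of subgroups}, applied to the Coxeter system $(K_n[X], X)$, gives
$$K_n[X] = K_n[X' \cup \{\beta\}] \ast_{K_n[X']} K_n[X' \cup \{\beta'\}].$$
Since $\widehat{\rho_k}(X' \cup \{\beta\}) = X' \cup \{\beta'\}$, the order-two automorphism $\widehat{\rho_k}$ of $K_n[X]$ interchanges the two (distinct) free factors of this amalgam, so Lemma \ref{Fixed-Point-Lemma} yields $K_n[X]^{\widehat{\rho_k}} \subseteq K_n[X']$.

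It follows that $K_n[X]^{\widehat{\rho_k}} = K_n[X']^{\widehat{\rho_k}}$: any fixed point of $\widehat{\rho_k}$ in $K_n[X]$ lies in $K_n[X']$ by the previous line and is fixed, hence lies in $K_n[X']^{\widehat{\rho_k}}$, and the reverse inclusion is clear. By the induction hypothesis $K_n[X']^{\widehat{\rho_k}} = K_n[X' \cap X_k] = K_n[X \cap X_k]$, which completes the induction. The main thing to get right is conceptual rather than computational: one should \emph{not} try to split $X$ in one stroke into two $\widehat{\rho_k}$-conjugate halves, since a generator with support containing $k$ may well commute with a generator whose support contains $k+1$, so the infinite-exponent hypothesis of Lemma \ref{amalgamated product of subgroups} would fail; peeling off a single orbit $\{\beta, \widehat{\rho_k}(\beta)\}$ at a time reduces that hypothesis to the trivial observation that $\beta$ and $\widehat{\rho_k}(\beta)$ never commute.
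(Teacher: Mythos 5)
Your proof is correct and uses essentially the same mechanism as the paper's: peeling off one $\widehat{\rho_k}$-orbit $\{\beta,\widehat{\rho_k}(\beta)\}$ of generators at a time, decomposing $K_n[X]$ as an amalgam over the parabolic on the remaining generators via Lemma \ref{amalgamated product of subgroups}, and applying Lemma \ref{Fixed-Point-Lemma}. The only differences are organizational: you run a single induction on $|X\setminus X_k|$ with an arbitrary choice of orbit and treat general $k$ directly, whereas the paper peels off the orbits in a fixed order through two nested inductions for $k=1$ and then conjugates to reach general $k$.
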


\begin{proof}
We first prove the proposition for $k=1$. 
\par
The fact that $K_n[X \cap X_1] \subseteq K_n[X]^{\widehat{\rho_1}}$ is obvious. We now prove the reverse inclusion. Let $V= \{ \alpha_{i,j} \in X ~|~ (i,j) \notin \{ (1,2), (2,1)\}\}.$ First, we prove that $K_n[X]^{\widehat{\rho_1}} \subseteq K_n[V]$. If $X = V$, then there is nothing to prove. Otherwise, since $X$ is invariant under the conjugation action of $\rho_1$, we must have $X = V \cup \{\alpha_{1,2}, \alpha_{2,1}\}. $ We set $V' = V \cup \{ \alpha_{1,2}\}$ and $V'' = V \cup \{ \alpha_{2,1}\}$. Then, by Lemma \ref{amalgamated product of subgroups}, we get $K_n[X] = K_n[V']*_{K_n[V]} K_n[V''].$
Also, $\rho_1(K_n[V'])\rho_1 = K_n[V'']$ and $\rho_1(K_n[V''])\rho_1 = K_n[V']$. Thus, by Lemma \ref{Fixed-Point-Lemma}, we get that $K_n[X]^{\widehat{\rho_1}} \subseteq K_n[V]$.
\par

More generally, for $2\leq k\leq n$ we set 
$$V_k = \{ \alpha_{i,j} \in X \mid (i, j) \notin \{1, 2\} \times \{1, 2, \ldots , k\} \}.$$
We prove by induction on $k$ that $K_n[X]^{\widehat{\rho_1}} \subseteq K_n[V_k]$. The case $k=2$ holds since $V_2=V$. Suppose that the induction hypothesis holds for $k-1$, that is $K_n[X]^{\widehat{\rho_1}} \subseteq K_n[V_{k-1}].$ Now, if $V_k=V_{k-1}$, there is nothing to prove. So we suppose that $V_k \neq V_{k-1}$. Since the set $V_{k-1}$ is invariant under the conjugation action of $\rho_1$, we have $V_{k-1} = V_k \cup \{\alpha_{1,k}, \alpha_{2,k}\}$. Set $V_k' = V_k \cup \{\alpha_{1,k}\}$ and $V_k'' = V_k \cup \{ \alpha_{2,k}\}$. By Lemma \ref{amalgamated product of subgroups}, we have $$K_n[V_{k-1}] = K_n[V_k']*_{K_n[V_k]} K_n[V_k''].$$
Also, $\rho_1(K_n[V_k'])\rho_1 = K_n[V_k'']$ and $\rho_1(K_n[V_k'']) \rho_1= K_n[V_k']$. Hence, by Lemma \ref{Fixed-Point-Lemma}, we get $K_n[X]^{\widehat{\rho_1}} \subseteq K_n[V_k]$.
\par

Next, for $2\leq k\leq n$,  we consider the set 
$$W_k = \{ \alpha_{i,j} \in X \mid (i, j) \notin \{1, 2\} \times \{1, 2, \ldots , n\} \cup \{1, 2, \ldots , k\} \times \{1, 2\}\}.$$ 
We show by induction on $k$ that $K_n[X]^{\widehat{\rho_1}} \subseteq K_n[W_k]$. The case $k=2$ holds as $W_2= V_n$. We now suppose that the induction hypothesis holds for $k-1$, that is, $K_n[X]^{\widehat{\rho_1}} \subseteq K_n[W_{k-1}]$. If $W_k=W_{k-1}$, there is nothing to prove. So, we suppose that $W_k \neq W_{k-1}$. Since the set $W_{k-1}$ is invariant under the conjugation action of $\rho_1$, we have $W_{k-1} = W_k \cup \{\alpha_{1,k}, \alpha_{2,k}\}$. Set $W_k' = W_k \cup \{\alpha_{1,k}\}$ and $W_k'' = W_k \cup \{ \alpha_{2,k}\}$. Again, by Lemma \ref{amalgamated product of subgroups}, we have $$K_n[W_{k-1}] = K_n[W_k']*_{K_n[W_k]} K_n[W_k''].$$
Also, $\rho_1(K_n[W_k'])\rho_1 = K_n[W_k'']$ and $\rho_1(K_n[W_k'']) \rho_1= K_n[W_k']$. Thus, by Lemma \ref{Fixed-Point-Lemma}, we get $K_n[X]^{\widehat{\rho_1}} \subseteq K_n[W_k]$. 
\par
Finally, we notice that $W_k=X \cap X_1$, and hence $K_n[X]^{\widehat{\rho_1}} \subseteq K_n[X \cap X_1]$. This proves the proposition for $k=1$.
\medskip

Now, we consider $k \ge 2$. Choose an element $w\in \langle \rho_1, \ldots, \rho_{n-1}\rangle$ such that  $w \rho_1 w^{-1} = \rho_k$.  Given that the set $X$ is invariant under the action of $\rho_k$. Then the set $Y= w^{-1} X w$ is invariant under the action of $\rho_1$. By earlier case, we have $K_n[Y]^{\widehat{\rho_1}} = K_n[ Y \cap X_1]$.\\
It is easy to check that $$w(K_n[Y]^{\widehat{\rho_1}}) w^{-1}= K_n[wYw^{-1}]^{\widehat{\rho_k}} = K_n[X]^{\widehat{\rho_k}}$$ and $$w (K_n[ Y \cap X_1]) w^{-1} =  K_n[ wYw^{-1} \cap X_k] = K_n[ X \cap X_k].$$
Thus, we get $K_n[X]^{\widehat{\rho_k}}=  K_n[ X \cap X_k]$, which is desired.
\end{proof}

It follows from Proposition \ref{conjugation-action-of-rho1} that
$$K_n^{\widehat{\rho_k}}  = K_n[\mathcal{S}]^{\widehat{\rho_k}}  = K_n[X_k]$$
for each $1\le k \le n-1$.

\begin{corollary}\label{centraliser sn vtn kn same}
For each $n\geq 3$, $\C_{VT_n} (S_n) = \C_{K_n} (S_n) = 1$.
\end{corollary}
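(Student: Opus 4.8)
The plan is to exploit the semidirect product decomposition $VT_n = K_n \rtimes S_n$, where $S_n$ is identified with the subgroup $\langle \rho_1, \ldots, \rho_{n-1}\rangle$ via the splitting $\lambda$, together with the fixed-subgroup computation $K_n^{\widehat{\rho_k}} = K_n[X_k]$ recorded just after Proposition \ref{conjugation-action-of-rho1}. Write an arbitrary element $g \in \C_{VT_n}(S_n)$ uniquely as $g = x \rho$ with $x \in K_n$ and $\rho \in S_n$. The requirement that $g$ commutes with each generator $\rho_k$ of $S_n$ splits, after putting both $g\rho_k$ and $\rho_k g$ in normal form for the semidirect product, into the two conditions $\rho_k x \rho_k^{-1} = x$ for all $k$ and $\rho_k \rho = \rho \rho_k$ for all $k$; that is, $x$ lies in $K_n^{\widehat{\rho_k}}$ for every $k$, and $\rho$ lies in the centre of $S_n$.

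Since $n \ge 3$, the centre of $S_n$ is trivial, which forces $\rho = 1$. For the $K_n$-component, $x$ belongs to $\bigcap_{k=1}^{n-1} K_n^{\widehat{\rho_k}} = \bigcap_{k=1}^{n-1} K_n[X_k]$, where $X_k = \{\alpha_{i,j} \in \mathcal{S} \mid i, j \notin \{k, k+1\}\}$. By Lemma \ref{intersection of parabolic subgroups} this intersection equals $K_n\big[\bigcap_{k=1}^{n-1} X_k\big]$, and $\bigcap_{k=1}^{n-1} X_k$ consists of those $\alpha_{i,j}$ with $i, j \notin \bigcup_{k=1}^{n-1}\{k, k+1\} = \{1, 2, \ldots, n\}$, hence is empty. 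Therefore $x = 1$, so $g = 1$, giving $\C_{VT_n}(S_n) = 1$. Since $K_n$ is a subgroup of $VT_n$, we get $\C_{K_n}(S_n) \subseteq \C_{VT_n}(S_n) = 1$ at once.

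The argument is essentially bookkeeping once the identity $K_n^{\widehat{\rho_k}} = K_n[X_k]$ is available, so there is no genuine obstacle; the only point requiring care is the hypothesis $n \ge 3$, which is precisely what kills the $S_n$-component, since for $n = 2$ the element $\rho_1$ itself centralises $S_2 = \langle \rho_1 \rangle$ and the statement fails. One should also state explicitly at the outset that $S_n$ is being viewed inside $VT_n$ through $\lambda$, so that centralising $S_n$ means commuting with every $\rho_k$.
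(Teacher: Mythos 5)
Your proof is correct and follows essentially the same route as the paper: decompose an element of the centraliser via $VT_n = K_n \rtimes S_n$, kill the $S_n$-component using $\Z(S_n)=1$ for $n\ge 3$, and kill the $K_n$-component via $\bigcap_{k=1}^{n-1} K_n^{\widehat{\rho_k}} = \bigcap_{k=1}^{n-1} K_n[X_k] = 1$. The only cosmetic difference is that you invoke Lemma \ref{intersection of parabolic subgroups} explicitly for the last intersection, which the paper leaves implicit.
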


\begin{proof}
Recall that $VT_n = K_n \rtimes S_n$. Let $xy \in \C_{VT_n} (S_n)$, where $x \in K_n$ and $y\in S_n$. Then, $xy\rho = \rho xy$, that is, $xy = (\rho x \rho^{-1}) (\rho y \rho^{-1})$ for each $\rho \in S_n$. This implies that $y = \rho y \rho^{-1}$ for all $\rho \in S_n$, that is, $y\in \Z(S_n) = 1$ as $n \ge 3$. Thus, we have $\C_{VT_n} (S_n) = \C_{K_n} (S_n)$. But, note that $$\C_{K_n} (S_n) \leq \bigcap_{k=1}^{n-1} \C_{K_n} (\rho_k) =\bigcap_{k=1}^{n-1} K_n^{\widehat{\rho_k}}= \bigcap_{k=1}^{n-1} K_n[X_k] = 1,$$
which is desired.
\end{proof}

Following is an analogue of \cite[Lemma 3.10]{BellingeriParis2020}.

\begin{lemma}\label{working-lemma-fork=1}
Let $X$ be a subset of $\mathcal{S}$ invariant under the conjugation action of $\rho_1$. Let $\alpha \in K_n[X]$ such that $\rho_1 \alpha \rho_1 = \alpha^{-1}$. Then there exist $\alpha'\in K_n[X]$ and $\beta \in K_n[X \cap X_1]$ such that $$\alpha = \alpha' \beta \rho_1 \alpha'^{-1} \rho_1 \quad \text{and} \quad \beta^2 = 1.$$
\end{lemma}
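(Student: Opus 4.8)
The plan is to reduce the claim to the structural input of Proposition~\ref{conjugation-action-of-rho1} and the amalgamated-product machinery (Lemma~\ref{amalgamated product of subgroups}, Lemma~\ref{Fixed-Point-Lemma}, Lemma~\ref{automorphism-of-amalgamated-products}), mimicking the argument of \cite[Lemma 3.10]{BellingeriParis2020}. The key point is that $\widehat{\rho_1}$ restricted to $K_n[X]$ is an automorphism of order two whose fixed subgroup is $K_n[X\cap X_1]$ by Proposition~\ref{conjugation-action-of-rho1}. If $X=X\cap X_1$, i.e.\ $X$ contains neither $\alpha_{1,2}$ nor $\alpha_{2,1}$, then $\widehat{\rho_1}$ is trivial on $K_n[X]$, so $\alpha=\rho_1\alpha\rho_1=\alpha^{-1}$, giving $\alpha^2=1$; then $\alpha'=1$, $\beta=\alpha$ works. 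So assume $X\neq X\cap X_1$; since $X$ is $\widehat{\rho_1}$-invariant we get $X = V\cup\{\alpha_{1,2},\alpha_{2,1}\}$ with $V=X\setminus\{\alpha_{1,2},\alpha_{2,1}\}$, and setting $V'=V\cup\{\alpha_{1,2}\}$, $V''=V\cup\{\alpha_{2,1}\}$, Lemma~\ref{amalgamated product of subgroups} gives
$$K_n[X] = K_n[V']\ast_{K_n[V]} K_n[V''],$$
with $\widehat{\rho_1}$ swapping $K_n[V']$ and $K_n[V'']$.

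Next I would apply Lemma~\ref{automorphism-of-amalgamated-products} to the automorphism $\phi=\widehat{\rho_1}$ of this amalgamated product, the element $x=\alpha$, and the hypothesis $\phi(\alpha)=\alpha^{-1}$: this yields $y\in K_n[X]$ and $z\in K_n[V]$ with $\phi(z)=z^{-1}$ and $\alpha = y\,z\,\phi(y)^{-1} = y z \rho_1 y^{-1}\rho_1$. Now $z\in K_n[V]$ and $\widehat{\rho_1}(z)=z^{-1}$; but $\widehat{\rho_1}$ acts trivially on $K_n[V]$ (since $V\subseteq X_1$, as every element of $V$ avoids indices $1,2$ in the only place it could have had them — this needs the observation that $V = X\cap X_1$ here, because removing $\alpha_{1,2},\alpha_{2,1}$ from a $\widehat{\rho_1}$-invariant $X$ leaves exactly the generators supported away from $\{1,2\}$... wait, that's not literally true: $\alpha_{1,k}$ for $k\ge 3$ also involves the index $1$). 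So the more careful route is: $z\in K_n[V]$ and $\widehat{\rho_1}(z)=z^{-1}$ means $z\in K_n[V]^{\widehat{\rho_1}}$... no, $\phi(z)=z^{-1}$ is not a fixed-point condition. Instead, iterate the full argument of Proposition~\ref{conjugation-action-of-rho1}: I would run the same inductive amalgamation over the chains $V_k$, $W_k$ used there, at each stage applying Lemma~\ref{automorphism-of-amalgamated-products} and absorbing the $K_n[V']$-part of the new decomposition into $\alpha'$, so that after the whole induction $\alpha = \alpha'\,\beta\,\rho_1\alpha'^{-1}\rho_1$ with $\beta$ landing in $K_n[X\cap X_1]=K_n[\mathcal{S}]^{\widehat{\rho_1}}\cap K_n[X]$ and still satisfying $\widehat{\rho_1}(\beta)=\beta^{-1}$. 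Since $\beta\in K_n[X\cap X_1]$ and $\widehat{\rho_1}$ is trivial there, $\widehat{\rho_1}(\beta)=\beta$, hence $\beta=\beta^{-1}$, i.e.\ $\beta^2=1$, as required.

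Concretely, the cleanest packaging is an induction that at each step replaces the pair $(\alpha, X)$ by a pair $(\alpha_{\mathrm{new}}, X_{\mathrm{new}})$ with $X_{\mathrm{new}}$ a strictly smaller $\widehat{\rho_1}$-invariant subset and $\alpha = (\text{accumulated }\alpha')\cdot\alpha_{\mathrm{new}}\cdot\rho_1(\text{accumulated }\alpha')^{-1}\rho_1$, where $\alpha_{\mathrm{new}}\in K_n[X_{\mathrm{new}}]$ still satisfies $\rho_1\alpha_{\mathrm{new}}\rho_1 = \alpha_{\mathrm{new}}^{-1}$; the base case is $X_{\mathrm{new}}\subseteq X_1$, handled as above. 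Each inductive step is exactly one application of the decomposition $K_n[W_{k-1}]=K_n[W_k']\ast_{K_n[W_k]}K_n[W_k'']$ (and similarly for the $V_k$ chain) together with Lemma~\ref{automorphism-of-amalgamated-products}: the element $z$ produced lies in $K_n[W_k]$ and has $\phi(z)=z^{-1}$, which is precisely the inductive datum at the next stage, while $y$ gets merged into $\alpha'$.

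The main obstacle I anticipate is bookkeeping rather than conceptual: one must check that Lemma~\ref{automorphism-of-amalgamated-products} applies at each stage (the two factors are genuinely swapped by $\widehat{\rho_1}$, which is where $\widehat{\rho_1}$-invariance of the relevant $V_k,W_k$ is used, and the ``$\infty$-exponent'' hypothesis of Lemma~\ref{amalgamated product of subgroups} holds because $K_n$ is right-angled Coxeter, so any two generators either commute or generate an infinite dihedral group), and that the products $\alpha'$ accumulated over the different stages multiply up correctly — i.e.\ that conjugating the leftover relation by successive $y$'s composes into a single element $\alpha'\in K_n[X]$ and a single $\rho_1\alpha'^{-1}\rho_1$ on the right. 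Both are routine given the template of \cite[Lemma 3.10]{BellingeriParis2020}, with $K_n$ and $\widehat{\rho_1}$ in place of their braid-theoretic analogues.
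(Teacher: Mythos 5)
Your proposal is correct and follows essentially the same route as the paper: the paper's proof runs exactly the induction you describe over the chains $V_k$ and $W_k$ from Proposition \ref{conjugation-action-of-rho1}, applying Lemma \ref{amalgamated product of subgroups} and Lemma \ref{automorphism-of-amalgamated-products} at each stage, absorbing the $y$-factors into $\alpha'$, and concluding $\beta^2=1$ because $\widehat{\rho_1}$ fixes $K_n[X\cap X_1]$ pointwise. The subtlety you caught mid-argument (that one application of the amalgam lemma does not land $z$ in $K_n[X\cap X_1]$, since generators like $\alpha_{1,k}$ remain) is precisely why the paper iterates over both chains.
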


\begin{proof} We complete the proof in the following three steps.
\medskip

Step (1): For $2\leq k\leq n$, we set $$V_k = \{ \alpha_{i,j} \in X \mid (i, j) \notin \{1, 2\} \times \{1, 2, \ldots , k\} \}.$$ We prove by induction on $k$ that there exist $\alpha'\in K_n[X]$ and $\beta' \in K_n[V_k]$ such that $\rho_1 \beta' \rho_1 = \beta'^{-1}$ and $\alpha = \alpha' \beta' \rho_1 \alpha'^{-1} \rho_1$.
\par

Note that $V_2 = \{ \alpha_{i,j} \in X \mid (i, j) \notin \{ (1, 2), (2, 1) \}$. If $V_2 = X$, take $\alpha' = 1$, $\beta' = \alpha$, and we are done. So we assume that $V_2 \neq X$. Since $X$ is invariant under the conjugation action of $\rho_1$, we have $X = V_2 \cup \{\alpha_{1,2} , \alpha_{2,1}\}$. Set $V_2' = V_2 \cup \{\alpha_{1,2}\}$ and $V_2'' = V_2 \cup \{ \alpha_{2,1}\}$. By Lemma \ref{amalgamated product of subgroups}, we have $$K_n[X] = K_n[V_2']*_{K_n[V_2]} K_n[V_2''].$$

Also note that $\rho_1 K_n[V_2'] \rho_1 = K_n[V_2'']$ and $\rho_1 K_n[V_2''] \rho_1 = K_n[V_2']$. Recall that the conjugation action of $\rho_1$ on $K_n[X]$ is an order two automorphism of $K_n[X]$. Thus we are done for the case $k=2$ by Lemma \ref{automorphism-of-amalgamated-products}. 
\par

Suppose that $k\geq 3$ and that the induction hypothesis holds, i.e., there exist $\alpha_1'\in K_n[X]$ and $\beta_1' \in K_n[V_{k-1}]$ such that $\rho_1 \beta_1' \rho_1 = \beta_1'^{-1}$ and $\alpha = \alpha_1' \beta_1' \rho_1 \alpha_1'^{-1} \rho_1$. Now, if $V_{k} = V_{k-1}$, we are done. So, we assume that $V_{k} \neq V_{k-1}$. Since both $V_k$ and $V_{k-1}$ are invariant under the conjugation action of $\rho_1$, we have $V_{k-1} = V_k \cup \{\alpha_{1,k} , \alpha_{2,k}\}$. Set $V_k' = V_k \cup \{\alpha_{1,k}\}$ and $V_k'' = V_k \cup \{ \alpha_{2,k}\}$. By Lemma \ref{amalgamated product of subgroups}, we have $$K_n[V_{k-1}] = K_n[V_k']*_{K_n[V_k]} K_n[V_k''].$$
\par
Again the conjugation action of $\rho_1$ on $K_n[V_{k-1}]$ is an order two automorphism of $K_n[V_{k-1}]$. We also have $\rho_1 K_n[V_k'] \rho_1 = K_n[V_k'']$, $\rho_1 K_n[V_k''] \rho_1 = K_n[V_k']$ and $\rho_1 \beta_1' \rho_1 = \beta_1'^{-1}$, where $\beta_1' \in K_n[V_{k-1}]$. By Lemma \ref{automorphism-of-amalgamated-products}, there exist $\alpha_2'\in K_n[V_{k-1}]$ and $\beta' \in K_n[V_k]$ such that $\rho_1 \beta' \rho_1 = \beta'^{-1}$ and $\beta_1' = \alpha_2' \beta' \rho_1 \alpha_2'^{-1} \rho_1$. Now set $\alpha' = \alpha_1'\alpha_2'$. From induction hypothesis, we have $\alpha = \alpha_1' \beta_1' \rho_1 \alpha_1'^{-1} \rho_1$. Putting $\beta_1' = \alpha_2' \beta' \rho_1 \alpha_2'^{-1} \rho_1$, we have 
$$\alpha = \alpha_1' \alpha_2' \beta' \rho_1 \alpha_2'^{-1} \rho_1 \rho_1 \alpha_1'^{-1} \rho_1 = \alpha_1' \alpha_2' \beta' \rho_1 \alpha_2'^{-1}  \alpha_1'^{-1} \rho_1 = \alpha' \beta' \rho_1 \alpha'^{-1} \rho_1.$$
\par
We already have $\rho_1 \beta' \rho_1 = \beta'^{-1}$. This completes the proof of Step (1).
\medskip

Step (2):  For $2\leq k\leq n$, we set $$W_k = \{ \alpha_{i,j} \in X \mid (i, j) \notin \{1, 2\} \times \{1, 2, \ldots , n\} \cup \{1, 2, \ldots , k\} \times \{1, 2\}\}.$$ We now prove by induction on $k$ that there exist $\alpha'\in K_n[X]$ and $\beta' \in K_n[W_k]$ such that $\rho_1 \beta' \rho_1 = \beta'^{-1}$ and $\alpha = \alpha' \beta' \rho_1 \alpha'^{-1} \rho_1$.
\medskip

Since $W_2 = V_n$, the base case $k = 2$ is done by Step (1). Suppose that $k\geq 3$ and the induction hypothesis holds, i.e., there exist $\alpha_1'\in K_n[X]$ and $\beta_1' \in K_n[W_{k-1}]$ such that $\rho_1 \beta_1' \rho_1 = \beta_1'^{-1}$ and $\alpha = \alpha_1' \beta_1' \rho_1 \alpha_1'^{-1} \rho_1$. Now, if $W_{k} = W_{k-1}$, we are done. So, we assume that $W_{k} \neq W_{k-1}$. Since both $W_k$ and $W_{k-1}$ are invariant under the conjugation action of $\rho_1$, we have $W_{k-1} = W_k \cup \{\alpha_{k,1} , \alpha_{k,2}\}$. Set $W_k' = W_k \cup \{\alpha_{k,1}\}$ and $W_k'' =W_k \cup \{ \alpha_{k,2}\}$. By Lemma \ref{amalgamated product of subgroups}, we have $$K_n[W_{k-1}] = K_n[W_k']*_{K_n[W_k]} K_n[W_k''].$$
\par
Again the conjugation action of $\rho_1$ on $K_n[W_{k-1}]$ is an order two automorphism of $K_n[W_{k-1}]$. We also have $\rho_1 K_n[W_k'] \rho_1 = K_n[W_k'']$, $\rho_1 K_n[W_k''] \rho_1 = K_n[W_k']$ and $\rho_1 \beta_1' \rho_1 = \beta_1'^{-1}$, where $\beta_1' \in K_n[W_{k-1}]$. By Lemma \ref{automorphism-of-amalgamated-products}, there exist $\alpha_2'\in K_n[W_{k-1}]$ and $\beta' \in K_n[W_k]$ such that $\rho_1 \beta' \rho_1 = \beta'^{-1}$ and $\beta_1' = \alpha_2' \beta' \rho_1 \alpha_2'^{-1} \rho_1$. Now set $\alpha' = \alpha_1'\alpha_2'$. From induction hypothesis, we have $\alpha = \alpha_1' \beta_1' \rho_1 \alpha_1'^{-1} \rho_1$. Putting $\beta_1' = \alpha_2' \beta' \rho_1 \alpha_2'^{-1} \rho_1$, we have 
$$\alpha = \alpha_1' \alpha_2' \beta' \rho_1 \alpha_2'^{-1} \rho_1 \rho_1 \alpha_1'^{-1} \rho_1 = \alpha_1' \alpha_2' \beta' \rho_1 \alpha_2'^{-1}  \alpha_1'^{-1} \rho_1 = \alpha' \beta' \rho_1 \alpha'^{-1} \rho_1.$$
\par
We already have $\rho_1 \beta' \rho_1 = \beta'^{-1}$. This completes the proof of Step (2).
\medskip

Step (3): Note that $W_n = X \cap X_1$. Thus, we have $\alpha'\in K_n[X]$ and $\beta \in K_n[X\cap X_1]$ such that $$\alpha = \alpha' \beta \rho_1 \alpha'^{-1} \rho_1 \quad\text{and} \quad \rho_1 \beta \rho_1 = \beta^{-1}.$$ 
It only remains to be shown that $\beta^2 = 1$. But, by Proposition \ref{conjugation-action-of-rho1}, $\rho_1\beta\rho_1=\beta$. Thus, $\beta = \rho_1 \beta \rho_1 = \beta^{-1}$, and this completes the proof.
\end{proof}

Next, we generalise the preceding lemma.

\begin{lemma}\label{working-lemma}
Let $1\leq k \leq n-1$ be a fixed integer and $X$ a subset of $\mathcal{S}$ invariant under the conjugation action of $\rho_k$. Let $\alpha \in K_n[X]$ such that $\rho_k \alpha \rho_k = \alpha^{-1}$. Then there exist $\alpha'\in K_n[X]$ and $\beta \in K_n[X \cap X_k]$ such that $$\alpha = \alpha' \beta \rho_k \alpha'^{-1} \rho_k \quad \text{and} \quad\beta^2 = 1.$$
\end{lemma}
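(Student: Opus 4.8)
The plan is to reduce at once to the already-settled case $k=1$, namely Lemma \ref{working-lemma-fork=1}, by the same transport-of-structure device used at the end of the proof of Proposition \ref{conjugation-action-of-rho1}. Concretely, I would fix an element $w \in \langle \rho_1, \ldots, \rho_{n-1}\rangle$ with $w \rho_1 w^{-1} = \rho_k$; such a $w$ exists since the underlying permutation only needs to carry $\{1,2\}$ to $\{k,k+1\}$. By Lemma \ref{relations-between-Xk} this $w$ satisfies $w X_1 w^{-1} = X_k$, and conjugation by $w$ permutes the Coxeter generating set $\mathcal{S}$ via $w \alpha_{i,j} w^{-1} = \alpha_{w(i),w(j)}$ (Remark \ref{rephrasing-action}), hence carries standard parabolic subgroups of $K_n$ to standard parabolic subgroups.

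Next I would pull the data back along $w$. Set $Y = w^{-1} X w \subseteq \mathcal{S}$. Since $X$ is invariant under conjugation by $\rho_k$ and $w^{-1}\rho_k w = \rho_1$, the set $Y$ is invariant under conjugation by $\rho_1$; moreover $Y \cap X_1 = w^{-1}(X \cap X_k) w$. Put $\alpha_0 = w^{-1} \alpha w \in K_n[Y]$; then $\rho_1 \alpha_0 \rho_1 = w^{-1}(\rho_k \alpha \rho_k) w = w^{-1}\alpha^{-1} w = \alpha_0^{-1}$. Now Lemma \ref{working-lemma-fork=1} applies to the pair $(Y, \alpha_0)$ and produces $\alpha_0' \in K_n[Y]$ and $\beta_0 \in K_n[Y \cap X_1]$ with $\alpha_0 = \alpha_0' \beta_0 \rho_1 \alpha_0'^{-1} \rho_1$ and $\beta_0^2 = 1$.

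Finally I would push the conclusion forward again: set $\alpha' = w \alpha_0' w^{-1} \in K_n[X]$ and $\beta = w \beta_0 w^{-1} \in K_n[X \cap X_k]$. Conjugating the identity $\alpha_0 = \alpha_0'\beta_0 \rho_1 \alpha_0'^{-1}\rho_1$ by $w$ and using $w\rho_1 w^{-1} = \rho_k$ yields $\alpha = \alpha' \beta \rho_k \alpha'^{-1}\rho_k$, while $\beta^2 = w\beta_0^2 w^{-1} = 1$, as required. I do not expect a genuine obstacle here: the only point needing a little care is that the membership statements $\alpha_0 \in K_n[Y]$ and $\beta_0 \in K_n[Y\cap X_1]$ behave well under conjugation by $w$, which is exactly what Lemma \ref{relations-between-Xk} together with the explicit action on $\mathcal{S}$ guarantees, and all the real content — the iterated amalgamated-product decompositions and the invocations of Lemma \ref{automorphism-of-amalgamated-products} — has already been carried out in Lemma \ref{working-lemma-fork=1}.
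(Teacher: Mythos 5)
Your proposal is correct and follows essentially the same route as the paper: the paper likewise picks $w \in \langle \rho_1, \ldots, \rho_{n-1}\rangle$ with $w\rho_1 w^{-1} = \rho_k$, checks that $w^{-1}Xw$ is $\rho_1$-invariant and that $\rho_1(w^{-1}\alpha w)\rho_1 = w^{-1}\alpha^{-1}w$, and then applies Lemma \ref{working-lemma-fork=1}. Your write-up is in fact slightly more explicit than the paper's about transporting the conclusion back via Lemma \ref{relations-between-Xk}, but the argument is the same.
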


\begin{proof}
The case $k=1$ follows from Lemma \ref{working-lemma-fork=1}. So, we suppose that $k \ge 2$. Choose an element $w\in \langle \rho_1, \ldots, \rho_{n-1}\rangle$ such that $w^{-1} \rho_k w = \rho_1$, i.e., $w \rho_1 w^{-1} = \rho_k$. Note that

$$\rho_1 (w^{-1} X w) \rho_1 = w^{-1} (w \rho_1 w^{-1}) X (w \rho_1 w^{-1}) w = w^{-1} \rho_k  X  \rho_k w = w^{-1} X w,$$
i.e., $w^{-1} X w \subseteq \mathcal{S}$ is invariant under the conjugation action $\rho_1$. Similarly, we have 
$$\rho_1 (w^{-1} \alpha w) \rho_1 = w^{-1} (w \rho_1 w^{-1}) \alpha (w \rho_1 w^{-1}) w = w^{-1} \rho_k  \alpha  \rho_k w = w^{-1} \alpha^{-1} w.$$
Now the proof follows from a direct application of Lemma \ref{working-lemma-fork=1}.
\end{proof}
\medskip

\subsection{Technical results II} 
Let $\theta: VT_n  \to S_n$ and  $\lambda: S_n \rightarrow VT_n$ be as defined in Section \ref{section-prelim}.

\begin{proposition}\label{no-conjugation}
Let $1\leq k \leq n-1$ be a fixed integer and $\phi: S_n \to VT_n$ be a homomorphism such that $\theta \phi$ is identity on $S_n$. Suppose that $\phi (\tau_k) =  \beta \rho_k$ for some $\beta \in K_n[X_k]$ with $\beta^2=1$. Then $\beta = 1$.
\end{proposition}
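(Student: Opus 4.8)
The plan is to exploit the relations that the generator $\tau_k$ satisfies inside $S_n$, push them through $\phi$, and extract constraints on $\beta$. Since $\theta\phi = \mathrm{id}$, for every generator $\tau_j$ we have $\phi(\tau_j) = \beta_j \rho_j$ for some $\beta_j \in K_n$, and the hypothesis pins down $\beta_k = \beta \in K_n[X_k]$ with $\beta^2=1$. The first step is to use the far-commutativity relations $\tau_k \tau_\ell = \tau_\ell \tau_k$ for $|k-\ell|\ge 2$ together with the braid relation $\tau_k\tau_{k+1}\tau_k = \tau_{k+1}\tau_k\tau_{k+1}$ (and $\tau_{k-1}$ symmetrically) to constrain how $\beta$ interacts with the $\rho_j$'s. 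In particular, writing things out in the semidirect product $VT_n = K_n \rtimes S_n$ and projecting to $K_n$, a relation among the $\phi(\tau_j)$ becomes a relation among the $\beta_j$ together with the known $S_n$-action $\rho\,\alpha_{i,j}\,\rho^{-1} = \alpha_{\rho(i),\rho(j)}$.

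The cleanest route, I expect, is to first reduce to the case $k=1$ by conjugating by a suitable $w \in \langle \rho_1,\dots,\rho_{n-1}\rangle$ with $w\rho_1 w^{-1} = \rho_k$, exactly as in the proofs of Lemma~\ref{working-lemma} and Proposition~\ref{conjugation-action-of-rho1}; Lemma~\ref{relations-between-Xk} guarantees the hypothesis $\beta \in K_n[X_k]$ transports correctly to $\beta' \in K_n[X_1]$, and $\theta\phi = \mathrm{id}$ is preserved after composing $\phi$ with conjugation by $w$. So assume $k=1$ and $\phi(\tau_1) = \beta\rho_1$ with $\beta \in K_n[X_1]$, $\beta^2 = 1$. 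Now use the relation $\tau_1^2 = 1$: this gives $(\beta\rho_1)^2 = \beta\,(\rho_1\beta\rho_1) = \beta\,\widehat{\rho_1}(\beta) = 1$. But $\beta \in K_n[X_1] = K_n^{\widehat{\rho_1}}$, so $\widehat{\rho_1}(\beta) = \beta$, and hence $\beta^2 = 1$ — which we already knew, so this relation alone is not enough. The additional leverage must come from a relation that involves $\tau_1$ together with other generators, since $\beta$ lives in the parabolic subgroup $K_n[X_1]$ and the braid relation with $\tau_2$ is what will force $\beta$ to be trivial.

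Concretely, I would write $\phi(\tau_2) = \beta_2\rho_2$ and expand the braid relation $\phi(\tau_1)\phi(\tau_2)\phi(\tau_1) = \phi(\tau_2)\phi(\tau_1)\phi(\tau_2)$ in $K_n\rtimes S_n$. The $S_n$-components agree automatically ($\tau_1\tau_2\tau_1 = \tau_2\tau_1\tau_2$), so we get an equality in $K_n$ of the form $\beta\cdot\widehat{\rho_1}(\beta_2)\cdot\widehat{\rho_1\rho_2}(\beta) = \beta_2\cdot\widehat{\rho_2}(\beta)\cdot\widehat{\rho_2\rho_1}(\beta_2)$. Using $\widehat{\rho_1}(\beta) = \beta$ and the explicit permutation action, and similarly expanding the far-commutation relations $\tau_1\tau_j = \tau_j\tau_1$ for $j \ge 3$ (which give $\beta$ is fixed by $\widehat{\rho_j}$ for $j\ge 3$, i.e. $\beta \in \bigcap_{j\ge3} K_n[X_j]$ as well — combined with $\beta\in K_n[X_1]$ this severely restricts the indices $(i,j)$ appearing in $\beta$), I expect the system to collapse. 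The main obstacle — the step needing real care — is controlling the interaction of $\beta$ with $\beta_2$ in the braid relation: a priori $\beta_2$ is an arbitrary element of $K_n$, so one cannot immediately cancel. The resolution should be to observe that $\beta \in K_n[X_1]\cap\big(\bigcap_{j\ge 3}K_n[X_j]\big)$ forces $\beta$ to be a product of generators $\alpha_{i,j}$ with $\{i,j\}\subseteq\{3,4,\dots\}$ yet avoiding all pairs touching $\{j,j+1\}$ for $j\ge 3$, and when $n$ is small this intersection of parabolics (computed via Lemma~\ref{intersection of parabolic subgroups}) is already trivial; for larger $n$ one feeds the resulting simplified braid-relation identity back through $\theta$ or through a length/normal-form argument in the right-angled Coxeter group $K_n$ (Lemma~\ref{involutions-in-RACG}) to conclude $\beta = 1$. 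I would double-check the low-rank cases $n = 2,3,4$ by hand since several of these relations degenerate there.
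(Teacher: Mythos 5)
Your reduction to $k=1$ via conjugation by $w$ with $w\rho_1w^{-1}=\rho_k$ matches the paper exactly, and you correctly identify that $\tau_1^2=1$ gives nothing and that relations involving the other generators must be used. But there is a genuine gap at the key step. You claim that the far-commutation relations $\tau_1\tau_j=\tau_j\tau_1$ for $j\ge 3$ "give $\beta$ is fixed by $\widehat{\rho_j}$, i.e.\ $\beta\in\bigcap_{j\ge3}K_n[X_j]$". They do not. Writing $\phi(\tau_j)=\beta_j\rho_j$, the relation yields only
$$\beta\cdot\widehat{\rho_1}(\beta_j)=\beta_j\cdot\widehat{\rho_j}(\beta)$$
in $K_n$, which involves the completely unknown element $\beta_j$; one cannot conclude $\widehat{\rho_j}(\beta)=\beta$ from this. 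You notice this exact obstacle for $\beta_2$ in the braid relation ("a priori $\beta_2$ is an arbitrary element of $K_n$, so one cannot immediately cancel") but then your proposed resolution rests on precisely the unjustified claim for the $\beta_j$, $j\ge 3$. Were the claim true, the argument would indeed finish instantly, since $X_1\cap\bigcap_{j\ge3}X_j=\emptyset$; that this makes the conclusion look too easy is itself a warning sign.

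The missing idea in the paper is a device that neutralizes the unknown $\beta_j$: assuming $\beta\ne1$, write $\beta=xgx^{-1}$ with $g$ cyclically reduced (Lemma~\ref{involutions-in-RACG} forces $g=\alpha_{i_1,j_1}\cdots\alpha_{i_k,j_k}$ with all indices distinct and in $\{3,\dots,n\}$, say $i_1=3$), and define a character $\eta:K_n\to\{1,-1\}$ sending the single generator $\alpha_{3,j_1}$ to $-1$ and all others to $1$. Applying $\eta$ to the relation $\beta\,\widehat{\rho_1}(x_3)\,\widehat{\rho_3}(\beta)\,x_3^{-1}=1$ coming from $[\tau_1,\tau_3]=1$, the unknown $x_3$ contributes $\eta(x_3)\eta(x_3)^{-1}=1$ because $\widehat{\rho_1}$ fixes $\alpha_{3,j_1}$, while $\eta(\beta)=-1$ and $\eta(\widehat{\rho_3}(\beta))=1$ (distinctness of the indices ensures $\alpha_{3,j_1}$ does not occur in $\rho_3 g\rho_3$), giving the contradiction $1=-1$. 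Without this character argument, or some equivalent mechanism for cancelling the unknown components, your outline does not close; the vague fallback of "feeding the identity back through $\theta$ or a length/normal-form argument" does not supply it.
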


\begin{proof}
We first prove the assertion for $k=1$. If $n\leq 3$, then $K_n[X_1] = 1$ and the assertion is vacuously true. Thus, we assume that $n\geq 4$.
\par 

Suppose that $\beta \neq 1$. We proceed to obtain a contradiction. By Lemma \ref{involutions-in-RACG}, we have $\beta = x g x^{-1}$ for some $x, g\in K_n[X_1]$ with $g\neq 1$  cyclically reduced of the form $$g = \alpha_{i_1, j_1} \alpha_{i_2, j_2} \ldots \alpha_{i_k, j_k},$$ where $3\leq i_1, j_1, i_2, j_2, \ldots, i_k, j_k\leq n$ are all distinct integers. Without loss of generality, we assume that $i_1 = 3$. Since $\theta \phi$ is identity on $S_n$, we can write $\phi (\tau_3) = x_3 \rho_3$ for some $x_3\in K_n$. As $\tau_1$ and $\tau_3$ commute with each other, we have $\beta \rho_1 x_3 \rho_3 = x_3 \rho_3 \beta \rho_1$, which can be rewritten as
\begin{equation}\label{rho1 alpha3 equation}
\beta (\rho_1 x_3 \rho_1)(\rho_3 \beta \rho_3) x_3^{-1} = 1 .
\end{equation}

Note that $\beta, \rho_3 \beta \rho_3, \rho_1 x_3 \rho_1, x_3^{-1}  \in K_n$. We define an epimorphism $\eta:K_n \to \{1, -1\}$  by setting

\begin{equation*}
\eta (\alpha_{i, j}) = 
\begin{cases}
-1 & \text{if}\ (i, j) = (3, j_1),\\
1 & \text{otherwise.}
\end{cases}
\end{equation*}
It follows from the construction of $g$ that $\eta(g)=-1$, and hence $\eta(\beta)= \eta(x)\eta(g)\eta(x)^{-1}=\eta(g)=-1$. The condition on indices $j_1,  i_2, j_2, i_3, j_3, \ldots, i_k, j_k$ imply that $\alpha_{3, j_1}$ does not appear in the expression $\rho_3 g \rho_3$, and hence $\eta (\rho_3 g  \rho_3) = 1$. Thus, we see that $$\eta (\rho_3 \beta \rho_3) = \eta (\rho_3 x g x^{-1} \rho_3) = \eta (\rho_3 x \rho_3) \eta (\rho_3 g  \rho_3)\eta (\rho_3x^{-1} \rho_3) = \eta (\rho_3 g  \rho_3)=1.$$  Since $\rho_1\alpha_{3, j_1}\rho_1= \alpha_{3, j_1}$, it follows that $\eta (\rho_1 x_3 \rho_1) = \eta (x_3)=\eta (x_3^{-1})$. Now, applying $\eta$ on \eqref{rho1 alpha3 equation} gives
$$1= \eta(\beta (\rho_1 x_3 \rho_1)(\rho_3 \beta \rho_3) x_3^{-1})=\eta (\beta)\; \eta(\rho_1 x_3 \rho_1)\; \eta(\rho_3 \beta \rho_3)\; \eta(x_3^{-1})= -1,$$
a contradiction. Hence, $\beta$ must be trivial and the proposition is proved for $k=1$. \\
 
Now, we assume that $k \ge 2$. Choose an element $w\in \langle \rho_1, \ldots, \rho_{n-1}\rangle$ such that  $w \rho_1 w^{-1} = \rho_k$. Setting $g = \theta (w)$, we get $g \tau_1 g^{-1} = \tau_k$. Consider the composition $\widehat{w}^{-1} \phi \widehat{g}: S_n \to VT_n$. It is easy to check that $\theta \widehat{w}^{-1} \phi \widehat{g}$ is identity on $S_n$. Further, note that $$\widehat{w}^{-1} \phi \widehat{g}(\tau_1) = \widehat{w}^{-1} \phi  (g \tau_1 g^{-1}) = \widehat{w}^{-1} \phi  (\tau_k) = \widehat{w}^{-1}  (\beta \rho_k) = (w^{-1} \beta w) (w^{-1} \rho_k w ) = (w^{-1} \beta w) \rho_1.$$
By Lemma \ref{relations-between-Xk}, $(w^{-1} \beta w) \in K_n[X_1]$ is an involution. Thus, by case $k=1$, we get $w^{-1} \beta w = 1$, and hence $\beta = 1$.
\end{proof}

We note that if $\phi: S_n \to VT_n$ is a homomorphism such that $\theta \phi$ is identity on $S_n$, then $\theta \widehat{w} \phi$ is also identity on $S_n$ for all $w\in K_n$, where $\widehat{w}$ is the inner automorphism of $VT_n$ induced by $w$. This together with Proposition \ref{no-conjugation}, yields the following.

\begin{corollary}\label{working-corollary-1}
Let $1\leq k \leq n-1$ be a fixed integer and $\phi: S_n \to VT_n$ be a homomorphism such that $\theta \phi$ is identity on $S_n$. Suppose that $\phi (\tau_k) =  w^{-1}\beta \rho_k w$ for some $w\in K_n$ and $\beta \in K_n[X_k]$ with $\beta^2 = 1$. Then $\beta = 1$.
\end{corollary}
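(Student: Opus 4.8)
The plan is to deduce this immediately from Proposition \ref{no-conjugation} by absorbing the conjugator $w$ into the homomorphism through an inner automorphism of $VT_n$, using crucially that $w \in K_n = \Ker(\theta)$.

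First I would record the elementary fact that for $w \in K_n$ the inner automorphism $\widehat{w}$ of $VT_n$ satisfies $\theta\,\widehat{w} = \theta$: for any $y \in VT_n$,
$$\theta(\widehat{w}(y)) = \theta(w)\,\theta(y)\,\theta(w)^{-1} = \theta(y),$$
since $\theta(w) = 1$. Hence, if $\phi \colon S_n \to VT_n$ is any homomorphism with $\theta\,\phi = \id$ on $S_n$, then $\widehat{w}\,\phi \colon S_n \to VT_n$ is again a homomorphism with $\theta\,(\widehat{w}\,\phi) = \theta\,\phi = \id$ on $S_n$; this is exactly the remark preceding the corollary.

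Next I would apply this with the $w$ occurring in the statement. Using $\phi(\tau_k) = w^{-1}\beta\rho_k w$, I compute
$$(\widehat{w}\,\phi)(\tau_k) = w\,(w^{-1}\beta\rho_k w)\,w^{-1} = \beta\rho_k,$$
where by hypothesis $\beta \in K_n[X_k]$ and $\beta^2 = 1$. Thus $\widehat{w}\,\phi$ satisfies all the hypotheses of Proposition \ref{no-conjugation} for the same index $k$, and that proposition gives $\beta = 1$, as required.

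There is no genuine obstacle here: the substantive content is already contained in Proposition \ref{no-conjugation}, and the corollary merely records that its conclusion is insensitive to post-composition with an inner automorphism induced by an element of $K_n$. The only point requiring (a line of) care is the identity $\theta\,\widehat{w} = \theta$, which is precisely where the assumption $w \in K_n$ — rather than $w$ being an arbitrary element of $VT_n$ — is used.
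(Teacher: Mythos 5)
Your proposal is correct and coincides with the paper's own argument: the paper likewise observes in the remark preceding the corollary that $\theta\,\widehat{w}\,\phi$ is still the identity on $S_n$ for $w \in K_n$, so that $(\widehat{w}\,\phi)(\tau_k) = \beta\rho_k$ puts one exactly in the situation of Proposition \ref{no-conjugation}. Nothing further is needed.
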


\begin{corollary}\label{working-corollary-2}
Let $\phi: S_n \to VT_n$ be a homomorphism such that $\theta \phi$ is identity on $S_n$. Then, for each $1\leq k \leq n-1$, there exists $x_k\in K_n$ such that $\phi (\tau_k) = x_k \rho_k x_k^{-1}$.
\end{corollary}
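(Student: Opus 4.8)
The plan is to exploit the semidirect product structure $VT_n = K_n \rtimes S_n$ together with the two immediately preceding results, Lemma \ref{working-lemma} and Corollary \ref{working-corollary-1}, so that the statement reduces to a short assembly argument.

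First I would fix $k$ and pin down the shape of $\phi(\tau_k)$. Since $\theta$ is precisely the projection $K_n \rtimes S_n \to S_n$ with kernel $K_n$, and $\theta\phi$ is the identity on $S_n$, the image $\phi(\tau_k)$ lies in the coset $K_n\rho_k$ (recall $\theta(\rho_k)=\tau_k$); hence $\phi(\tau_k) = \alpha_k\rho_k$ for a unique $\alpha_k \in K_n$. Applying $\phi$ to the relation $\tau_k^2 = 1$ gives $(\alpha_k\rho_k)^2 = 1$, and using $\rho_k^2 = 1$ this rearranges to $\rho_k\alpha_k\rho_k = \alpha_k^{-1}$.

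Next I would invoke Lemma \ref{working-lemma} with $X = \mathcal{S}$. The whole generating set $\mathcal{S}$ is invariant under conjugation by $\rho_k$ by Remark \ref{rephrasing-action}, and moreover $K_n[\mathcal{S}] = K_n$ while $\mathcal{S}\cap X_k = X_k$. So the lemma, applied to $\alpha = \alpha_k$, produces $\alpha' \in K_n$ and $\beta \in K_n[X_k]$ with $\beta^2 = 1$ and $\alpha_k = \alpha'\beta\rho_k\alpha'^{-1}\rho_k$. Multiplying on the right by $\rho_k$ and using $\rho_k^2 = 1$ yields $\phi(\tau_k) = \alpha_k\rho_k = \alpha'\beta\rho_k\alpha'^{-1}$, which is exactly of the form $w^{-1}\beta\rho_k w$ with $w = \alpha'^{-1} \in K_n$. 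Corollary \ref{working-corollary-1} now applies verbatim and forces $\beta = 1$. Hence $\phi(\tau_k) = \alpha'\rho_k\alpha'^{-1}$, and setting $x_k := \alpha' \in K_n$ completes the argument.

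I expect no real obstacle in this corollary itself: all the genuine work has already been done in Lemma \ref{working-lemma} (and, through it, in Lemma \ref{working-lemma-fork=1}) and in Corollary \ref{working-corollary-1} via Proposition \ref{no-conjugation}. The only two points that need a moment's care are the bookkeeping with $\rho_k^2 = 1$ when converting between the ``$\alpha_k\rho_k$'' form and the ``$w^{-1}\beta\rho_k w$'' form, and the observation that $\mathcal{S}$ is itself an admissible choice for the $\rho_k$-invariant set $X$ in Lemma \ref{working-lemma}, so that $\alpha'$ is permitted to range over all of $K_n$ rather than over a proper standard parabolic subgroup.
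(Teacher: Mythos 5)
Your proposal is correct and follows exactly the paper's route: write $\phi(\tau_k)=\alpha_k\rho_k$ using $\theta\phi=\id$, deduce $\rho_k\alpha_k\rho_k=\alpha_k^{-1}$ from $\tau_k^2=1$, and then combine Lemma \ref{working-lemma} (with $X=\mathcal{S}$) and Corollary \ref{working-corollary-1} to kill the $\beta$-term. The paper states this in one line; your version merely spells out the same bookkeeping explicitly.
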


\begin{proof}
Since $\theta \phi$ is identity on $S_n$, for each $1\leq k \leq n-1$, there exist $\alpha_k \in K_n$ such that $\phi (\tau_k) = \alpha_k \rho_k$. This gives $\alpha_k \rho_k \alpha_k \rho_k =1$, i.e., $ \rho_k \alpha_k \rho_k = \alpha_k^{-1}$. Now, we are done by Lemma \ref{working-lemma}, and Corollary \ref{working-corollary-1}.
\end{proof}

\begin{corollary}\label{no-conjugate}
Let $1\leq k \leq n-1$ be a fixed integer. Suppose that $\rho_k = w^{-1}\beta \rho_k w$ for some $w\in K_n$ and $\beta \in K_n[X_k]$ such that $\beta^2 = 1$. Then  $\beta = 1$.
\end{corollary}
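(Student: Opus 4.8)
The plan is to deduce the statement directly from Corollary \ref{working-corollary-1}. Recall the splitting $\lambda \colon S_n \to VT_n$ of $\theta$ given by $\lambda(\tau_i) = \rho_i$. Since $\theta(\rho_i) = \tau_i$ for every $i$, the composite $\theta \lambda$ is the identity on $S_n$, so $\lambda$ is a homomorphism of exactly the type to which Corollary \ref{working-corollary-1} applies.

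Now observe that the hypothesis of the present statement says precisely that $\lambda(\tau_k) = \rho_k = w^{-1} \beta \rho_k w$ with $w \in K_n$ and $\beta \in K_n[X_k]$ satisfying $\beta^2 = 1$. Thus, applying Corollary \ref{working-corollary-1} to the homomorphism $\phi = \lambda$ and the fixed index $k$ immediately yields $\beta = 1$, which is the assertion. Equivalently, one may instead set $\phi = \widehat{w}\, \lambda$; since $w$ lies in the kernel of $\theta$, we still have $\theta \phi = \id_{S_n}$, and rewriting the hypothesis as $w \rho_k w^{-1} = \beta \rho_k$ shows $\phi(\tau_k) = \beta \rho_k$, so Proposition \ref{no-conjugation} applies directly.

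Either route is purely formal: all of the real work --- including the vacuous small-$n$ cases where $K_n[X_k]$ is trivial, and the reduction from a general index $k$ to $k = 1$ by conjugating inside $\langle \rho_1, \dots, \rho_{n-1} \rangle \cong S_n$ --- has already been carried out in the proof of Proposition \ref{no-conjugation}. Consequently I anticipate no obstacle here; the corollary is recorded separately only because it is the convenient form in which the result will be invoked in the subsequent arguments.
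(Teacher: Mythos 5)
Your proposal is correct and coincides with the paper's own proof, which is exactly the one-line application of Corollary \ref{working-corollary-1} with $\phi = \lambda$. The alternative reformulation via Proposition \ref{no-conjugation} is also valid but adds nothing beyond the paper's argument.
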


\begin{proof}
Follows by taking $\phi =  \lambda$ in Corollary \ref{working-corollary-1}.
\end{proof}
\medskip

\subsection{Technical results III}

We say that an element $\alpha \in K_n$ satisfies condition \ref{C} if 
\begin{equation*} \label{C}
\alpha (\rho_2 \alpha^{-1} \rho_2) (\rho_2 \rho_1 \alpha \rho_1 \rho_2) (\rho_2 \rho_1 \rho_2 \alpha^{-1} \rho_2 \rho_1 \rho_2) (\rho_1 \rho_2 \alpha \rho_2 \rho_1) (\rho_1 \alpha^{-1}\rho_1) = 1, \tag{\textbf{C}}
\end{equation*}
or equivalently 
\begin{equation*}
[\alpha, \rho_2] ~\widehat{\rho_2 \rho_1}([\alpha, \rho_2])~\widehat{\rho_1}( [\rho_2 ,\alpha]) = 1,
\end{equation*}
or equivalently
\begin{equation*}
\alpha~ (\rho_2 \alpha^{-1} \rho_1 \alpha \rho_1 \rho_2)~\widehat{\rho_2\rho_1}\big((\rho_2 \alpha^{-1} \rho_1 \alpha \rho_1 \rho_2)\big)~(\rho_1 \alpha^{-1}\rho_1) = 1.
\end{equation*}

Let $\alpha = x z y$ for some $x \in K_n[X_1]$, $y \in K_n[X_2]$ and $z \in K_n$. It is easy to see that if $\alpha$ satisfies condition \ref{C}, then $z$ also satisfies condition \ref{C}.
\medskip

For the rest of this section, we assume that $n \ge4$ and fix a subset $X$ of $\mathcal{S}$ which is invariant under the conjugation action of both $\rho_1$ and $\rho_2$.  We set $U_3 = X$ and 
$$U_k = \{ \alpha_{i,j} \in X \mid (i, j)\notin \{1, 2, 3\} \times \{4, 5,\ldots, k\} \}$$  
for $4\leq k \leq n$. Note that $$U_n \subseteq U_{n-1}\subseteq \cdots \subseteq  U_k \subseteq U_{k-1} \subseteq \cdots \subseteq  U_3=X$$ and each $U_k$ is invariant under the conjugation action of both $\rho_1$ and $\rho_2$. The next three lemmas are analogues of \cite[Lemma 3.11]{BellingeriParis2020} for virtual twin groups.

\begin{lemma}\label{working-lemma-2}
Let $z\in K_n[U_{k-1}]$ satisfies condition \ref{C} for some $4\leq k \leq n$. Then there exist $x \in K_n[X_1\cap X]$, $y\in K_n[X_2\cap X]$ and $z_1\in K_n[U_k]$ such that $z = x z_1 y$ and $z_1$ satisfies condition \ref{C}.
\end{lemma}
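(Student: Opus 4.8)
The plan is to separate off the three generators $a:=\alpha_{1,k}$, $b:=\alpha_{2,k}$, $c:=\alpha_{3,k}$ that distinguish $K_n[U_{k-1}]$ from $K_n[U_k]$, realise $K_n[U_{k-1}]$ as an amalgamated free product carrying a natural $S_3$-action, and then use condition \ref{C} to push $z$ into the product set $K_n[X_1\cap X]\cdot K_n[U_k]\cdot K_n[X_2\cap X]$. First, since $X$ is invariant under $\widehat{\rho_1}$ and $\widehat{\rho_2}$, the set $\{a,b,c\}$ is a single $\langle\rho_1,\rho_2\rangle$-orbit, hence either $\{a,b,c\}\subseteq X$ or $\{a,b,c\}\cap X=\emptyset$; in the second case $U_{k-1}=U_k$ and the lemma holds with $x=y=1$, $z_1=z$. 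So assume $a,b,c\in X$, set $H:=K_n[U_k]$, and put $G_1:=K_n[U_k\cup\{a\}]$, $G_2:=K_n[U_k\cup\{b\}]$, $G_3:=K_n[U_k\cup\{c\}]$. Because $a,b,c$ pairwise share the index $k$ (so the corresponding Coxeter exponents are $\infty$), two applications of Lemma \ref{amalgamated product of subgroups} give $K_n[U_{k-1}]=G_1\ast_H G_2\ast_H G_3$; since $U_k$ is $\widehat{\rho_1}$- and $\widehat{\rho_2}$-stable, $\langle\rho_1,\rho_2\rangle\cong S_3$ acts on $K_n[U_{k-1}]$ permuting $\{G_1,G_2,G_3\}$ as $S_3$ permutes $\{1,2,3\}$ (via $G_i\leftrightarrow i$, $\widehat{\rho_1}\leftrightarrow(1\,2)$, $\widehat{\rho_2}\leftrightarrow(2\,3)$) and stabilising $H$. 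I record the containments $c\in X_1\cap X$, $a\in X_2\cap X$, $b\notin X_1\cup X_2$, $(X_1\cap X)\cap U_{k-1}\subseteq U_k\cup\{c\}$ (so $K_n[(X_1\cap X)\cap U_{k-1}]\subseteq G_3$), $(X_2\cap X)\cap U_{k-1}\subseteq U_k\cup\{a\}$ (so this parabolic lies in $G_1$), and that $\widehat{\rho_1}$ is trivial on $K_n[X_1]$ while $\widehat{\rho_2}$ is trivial on $K_n[X_2]$.

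I would then induct on the syllable length $\ell$ of $z$ in the normal form of Serre (Lemma \ref{Normal-Form-Amalgamated-Free-Product}) for $G_1\ast_H G_2\ast_H G_3$. If $\ell=0$ then $z\in H=K_n[U_k]$ and we take $x=y=1$, $z_1=z$. For $\ell\ge 1$ I would substitute the normal form of $z$ into
\[
z\;\widehat{\rho_2}(z)^{-1}\;\widehat{\rho_2\rho_1}(z)\;\widehat{\rho_2\rho_1\rho_2}(z)^{-1}\;\widehat{\rho_1\rho_2}(z)\;\widehat{\rho_1}(z)^{-1}=1,
\]
which is condition \ref{C}. The six conjugating elements $1,\rho_2,\rho_2\rho_1,\rho_2\rho_1\rho_2,\rho_1\rho_2,\rho_1$ run once around the hexagonal Cayley graph of $S_3$, so the $S_3$-action turns the normal form of $z$ into a normal form of the same syllable length for each of the six conjugates, with factor labels permuted; which pair of factors meets at each of the six seams is then determined by the factor of the extremal syllable of $z$ and by the single Coxeter generator separating the corresponding hexagon vertices. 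Running the reduction process for amalgamated free products through this six-fold product, I expect to conclude that, unless the leading syllable $t_1$ of $z$ contributes (modulo $H$) only an absorbable factor of $G_3$ and the trailing syllable $t_\ell$ contributes only an absorbable factor of $G_1$, the product reduces to a nonempty reduced word and so cannot equal $1$; in particular no genuine $G_2$-contribution can survive, which I would confirm with a sign homomorphism $K_n[U_{k-1}]\to\{\pm1\}$ built as in Proposition \ref{no-conjugation} together with its $S_3$-translates. This lets me write $z=x\,z'\,y$ with $x\in K_n[X_1\cap X]$, $y\in K_n[X_2\cap X]$ and $z'\in K_n[U_{k-1}]$ of smaller syllable length; since $x\in K_n[X_1]$ and $y\in K_n[X_2]$, the remark preceding this lemma shows $z'$ again satisfies condition \ref{C}, so the induction hypothesis applies to $z'$ and, composing, gives the desired factorisation, with the final $z_1\in K_n[U_k]$ satisfying condition \ref{C} by that same remark.

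The main obstacle will be precisely this cancellation bookkeeping in the hexagonal product: unlike Lemma \ref{working-lemma-fork=1}, whose hypothesis $\rho_1\alpha\rho_1=\alpha^{-1}$ is word-for-word the hypothesis of Lemma \ref{automorphism-of-amalgamated-products}, condition \ref{C} is a hexagon relation for the $S_3$-action and has no off-the-shelf normal-form lemma attached; one must either carry out the six-seam analysis in $G_1\ast_H G_2\ast_H G_3$ by hand, tracking for each syllable which (if any) of $a,b,c$ it involves, or first isolate an $S_3$-equivariant analogue of Lemma \ref{automorphism-of-amalgamated-products}. A secondary technical nuisance is the choice of coset transversals $T_1,T_2,T_3$ of $H$ in $G_1,G_2,G_3$: to make the seam analysis literal one wants them permuted by the $\widehat{\rho_i}$, which forces one to check that every coset of $H$ fixed by some $\widehat{\rho_i}$ contains a $\widehat{\rho_i}$-fixed representative, and otherwise to carry an $H$-valued correction through the computation.
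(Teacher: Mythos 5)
Your setup is exactly the paper's: the same tripartite amalgam $K_n[U_{k-1}]=G_1\ast_H G_2\ast_H G_3$ with $H=K_n[U_k]$ and $G_j=K_n[U_k\cup\{\alpha_{j,k}\}]$, the same appeal to Serre's normal form (Lemma \ref{Normal-Form-Amalgamated-Free-Product}), the same induction on syllable length, and the same seam analysis of the six-fold product coming from condition \ref{C}. The dichotomy $\{a,b,c\}\subseteq X$ versus $\{a,b,c\}\cap X=\emptyset$, the identification of which $\widehat{\rho_i}$ fixes which $G_j$, and the routing of the $G_3$-type end into $x\in K_n[X_1\cap X]$ and the $G_1$-type end into $y\in K_n[X_2\cap X]$ are all correct and match the paper. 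Two smaller corrections: the pure-$G_2$ base case and the non-absorbable configurations are excluded by the normal form alone (a product of six syllables, none in $H$ and no two adjacent ones in the same factor, cannot be trivial), so the sign homomorphism plays no role at that point; and you only need \emph{one} end to be absorbable per induction step, not both --- which is in fact all the normal-form argument delivers, since it only forces one of the seam products into $H$.

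The genuine gap is hidden in the word ``absorbable''. The seam analysis yields only $c:=a_l\,\widehat{\rho_2}(a_l)^{-1}\in H$ (or the mirror statement for $a_1$ and $\rho_1$). To peel the syllable you must upgrade this to $a_l=d\,y$ with $d\in H$ and $y$ genuinely fixed by $\widehat{\rho_2}$, i.e.\ $y\in K_n[X_2\cap X]$ --- and membership of $c$ in $H$ does not give this by itself. Your proposal stalls exactly there (``I expect to conclude\dots'', ``the main obstacle\dots''). No new $S_3$-equivariant analogue of Lemma \ref{automorphism-of-amalgamated-products} is needed: observe that $\widehat{\rho_2}(c)=c^{-1}$, apply Lemma \ref{working-lemma} inside the $\rho_2$-invariant set $U_k$ to write $c=\alpha\beta\rho_2\alpha^{-1}\rho_2$ with $\alpha\in K_n[U_k]$, $\beta\in K_n[U_k\cap X_2]$ and $\beta^2=1$, and then feed the resulting relation $\rho_2=a_l^{-1}\alpha\beta\rho_2\alpha^{-1}a_l$ into Corollary \ref{no-conjugate} to force $\beta=1$; then $y:=\alpha^{-1}a_l$ commutes with $\rho_2$ and $a_l=\alpha y$ as required. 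Since each seam condition involves a single $\rho_i$ rather than the full $S_3$-action, the transversal-equivariance issue you flag never arises: the paper works directly with the seam products $b_ib_{i+1}$ and needs no $S_3$-permuted coset representatives.
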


\begin{proof}
If $z \in K_n[U_k]$, then we are done by taking $x = 1$, $y=1$ and $z_1 = z$. So, we assume that $z \in K_n[U_{k-1}]\setminus K_n[U_k]$. This implies that $U_k \neq U_{k-1}$. Since both $U_{k-1}$ and $U_k$ are invariant under the conjugation action of 
$\rho_1$ and $\rho_2$, we have $U_{k-1} = U_k \cup \{\alpha_{1, k}, \alpha_{2, k}, \alpha_{3, k}\}$. We set
\begin{eqnarray*}
G_j & :=& K_n[U_k \cup \{\alpha_{j,k}\}],  \quad 1\leq j \leq 3,\\
H & :=& G_1 \cap G_2 \cap G_3 = K_n[U_k],\\
G & :=& G_1 *_H G_2 *_H G_3 = K_n[U_{k-1}].
\end{eqnarray*}
Due to Lemma \ref{Normal-Form-Amalgamated-Free-Product}, we can write $z = a_1 a_2 \ldots a_l$ for some integer $l\geq 1$ such that 
\begin{itemize}
\item[(1)] for each $1\leq i \leq l$, there exists $j = j(i) \in \{1, 2, 3\}$ such that $a_i \in G_j \setminus H$, and
\item[(2)] $j(i) \neq j(i+1)$ for all $1\leq i \leq l-1$. 
\end{itemize}

We now argue by induction on the length $l$ of $z$. Suppose that $l=1$. Then, either $a_1 \in G_1\setminus H$ or $a_1 \in G_2\setminus H$ or $a_1 \in G_3\setminus H$. First suppose that $z = a_1 \in G_1\setminus H$. We now set
\begin{align*}
b_1 &= a_1 \in G_1\setminus H ,& b_2 &=  \rho_2 a_1^{-1} \rho_2 \in G_1 \setminus H,&
b_3 & =  \rho_2 \rho_1 a_1 \rho_1 \rho_2 \in G_3 \setminus H,&\\
b_4 & =  \rho_2 \rho_1 \rho_2 a_1^{-1} \rho_2 \rho_1 \rho_2 \in G_3 \setminus H,&
b_5 & = \rho_1 \rho_2 a_1 \rho_2 \rho_1 \in G_2 \setminus H ,& b_6 & =  \rho_1 a_1^{-1} \rho_1 \in G_2 \setminus H.&
\end{align*}

Since $z = a_1$ satisfies condition \ref{C}, we have $b_1 b_2 b_3 b_4 b_5 b_6 = 1$.  In view of Lemma \ref{Normal-Form-Amalgamated-Free-Product} and the fact that $H$ is invariant under the conjugation action of $\rho_1$ and $\rho_2$, this is possible only if $b_1 b_2$, $b_3 b_4$ and $b_5 b_6$ all lie in $H$. Set $c=a_1 \rho_2 a_1^{-1} \rho_2 \in H$.
\par

Note that 
$$\rho_2 c \rho_2 = \rho_2 a_1 \rho_2 a_1^{-1} \rho_2 \rho_2 = \rho_2 a_1 \rho_2 a_1^{-1} = c^{-1}.$$ 
Hence, by Lemma \ref{working-lemma}, there exists $\alpha \in H = K_n[U_k]$ and $\beta \in K_n [U_k \cap X_2]$ such that $c = \alpha \beta \rho_2 \alpha^{-1} \rho_2$ with $\beta^2 = 1$. This gives us $a_1 \rho_2 a_1^{-1} \rho_2 = c = \alpha \beta \rho_2 \alpha^{-1} \rho_2$, and consequently $ \rho_2 = a_1^{-1} \alpha \beta \rho_2 \alpha^{-1}a_1$, where $a_1^{-1}\alpha \in K_n$. By Corollary \ref{no-conjugate}, we have $\beta=1$. So we have $c= \alpha \rho_2 \alpha^{-1} \rho_2$. Set $y= \alpha^{-1}a_1$, $x= 1,$ and $z_1 = \alpha \in H = K_n[U_k].$ This gives us $z = a_1 = x z_1 y$ and $\rho_2 y \rho_2 = \rho_2 \alpha^{-1}a_1  \rho_2 = (\rho_2 \alpha^{-1} \rho_2) (\rho_2 a_1 \rho_2) = (\alpha^{-1}c) (c^{-1}a_1) = y$, i.e., $y \in K_n[X_2\cap X]$. So,  we are done for the case $ a_1 \in G_1 \setminus H$. 
\medskip

Now consider $a_1 \in G_2\setminus H$. We again set
\begin{align*}
b_1 &= a_1 \in G_2\setminus H ,& b_2 & = \rho_2 a_1^{-1} \rho_2 \in G_3 \setminus H,&
b_3 & = \rho_2 \rho_1 a_1 \rho_1 \rho_2 \in G_1 \setminus H,&\\
 b_4 & = \rho_2 \rho_1 \rho_2 a_1^{-1} \rho_2 \rho_1 \rho_2 \in G_2 \setminus H ,&
b_5 & = \rho_1 \rho_2 a_1 \rho_2 \rho_1 \in G_3 \setminus H ,& b_6 & = \rho_1 a_1^{-1} \rho_1 \in G_1 \setminus H.&
\end{align*}

Since $z= a_1$ satisfies condition \ref{C}, we have $b_1 b_2 b_3 b_4 b_5 b_6 = 1$. But, this leads to a contradiction, thanks to Lemma \ref{Normal-Form-Amalgamated-Free-Product}. Finally, suppose that $a_1 \in G_3\setminus H$. We again set
\begin{align*}
b_1 &= a_1 \in G_3\setminus H ,& b_2 & = \rho_2 a_1^{-1} \rho_2 \in G_2 \setminus H,&
b_3 & = \rho_2 \rho_1 a_1 \rho_1 \rho_2 \in G_2 \setminus H,&\\
b_4 & = \rho_2 \rho_1 \rho_2 a_1^{-1} \rho_2 \rho_1 \rho_2 \in G_1 \setminus H ,&
b_5 & = \rho_1 \rho_2 a_1 \rho_2 \rho_1 \in G_1 \setminus H ,& b_6 & = \rho_1 a_1^{-1} \rho_1 \in G_3 \setminus H.&
\end{align*}

Since $a_1$ satisfies condition \ref{C}, again we have $b_1 b_2 b_3 b_4 b_5 b_6 = 1$. It follows that $b_2 b_3$  and $b_4 b_5$ lie in $H$, and consequently  $a_1^{-1} \rho_1 a_1 \rho_1 = c$ (say) $ \in H$.
Note that $\rho_1 c \rho_1 = c^{-1}$. Hence, by Lemma \ref{working-lemma}, there exists $d\in H = K_n[U_k]$ and $t\in K_n [U_k \cap X_1]$ such that $c = d t \rho_1 d^{-1} \rho_1$ with $t^2 = 1$. Thus, we have $a_1^{-1} \rho_1 a_1 \rho_1 = c = d t \rho_1 d^{-1} \rho_1$, and consequently  $ \rho_1 = a_1 d t \rho_1 d^{-1}a_1^{-1}$, where $a_1 d \in K_n$. By Corollary \ref{no-conjugate}, we have $t = 1$. Now, set $x = a_1d$, $y = 1$ and $z_1 = d^{-1}\in K_n[U_k]$. It follows that $z = x z_1 y$ and $\rho_1 x \rho_1 = \rho_1  a_1d  \rho_1 = (\rho_1  a_1\rho_1) (\rho_1 d  \rho_1) = (a_1 c)(c^{-1}d) = a_1d = x$, i.e., $ x \in K_n[X_1\cap X]$. Hence, we are done for the case $l=1$.
\medskip

Let us now suppose that $l\geq 2$ and that the induction hypothesis holds. Since $z=a_1a_2 \ldots a_l$ satisfy the condition  \ref{C}, we have 
$$ (a_1\ldots a_l)\widehat{\rho_2}(a_l^{-1} \ldots a_1^{-1})\widehat{\rho_2\rho_1}(a_1\ldots a_l)\widehat{\rho_2 \rho_1 \rho_2}(a_l^{-1} \ldots a_1^{-1})\widehat{\rho_1\rho_2}(a_1\ldots a_l)\widehat{\rho_1}(a_l^{-1} \ldots a_1^{-1})=1.$$
Then, by Lemma \ref{Normal-Form-Amalgamated-Free-Product}, we have either of the following:
\begin{enumerate}
\item $a_l\widehat{\rho_2}(a_l^{-1}) \in H$,
\item $\widehat{\rho_2}(a_1^{-1})\widehat{\rho_2\rho_1}(a_1) \in H,$
\item $\widehat{\rho_2\rho_1}(a_l)\widehat{\rho_2 \rho_1 \rho_2}(a_l^{-1})=\widehat{\rho_2\rho_1}(a_l)\widehat{\rho_1 \rho_2 \rho_1}(a_l^{-1}) \in H,$
\item $\widehat{\rho_2 \rho_1 \rho_2}(a_1^{-1})\widehat{\rho_1\rho_2}(a_1)=\widehat{\rho_1 \rho_2 \rho_1}(a_1^{-1})\widehat{\rho_1\rho_2}(a_1) \in H,$
\item $\widehat{\rho_1\rho_2}(a_l)\widehat{\rho_1}(a_l^{-1}) \in H.$
\end{enumerate}
Since $H$ is invariant under the conjugation action of $\rho_1$ and $\rho_2$, we note that either of the above possibilities can be boiled down to the fact that either $a_l\widehat{\rho_2}(a_l^{-1}) \in H$ or $a_1^{-1}\widehat{\rho_1}(a_1) \in H.$

We first suppose that $c = a_l \rho_2 a_l^{-1} \rho_2  \in H$. Then, we have $\rho_2 c \rho_2 = c^{-1}.$ Repeating the above arguments, we get $c= d \rho_2 d^{-1} \rho_2$ for some $d\in H = K_n[U_k]$.
\medskip

Set $x_l = 1$, $w_l = a_{l-1}d$, $y_l = d^{-1}a_l$ and $z_l = a_1 a_2 \ldots a_{l-2} w_l.$ Then we see that $z = x_l z_l y_l$. It is easy to check that  $y_l\in K_n[X_2\cap X]$,  and hence $z_l$ satisfies condition \ref{C}. Note that the length of $z_l$ is $l-1$, whereas the length of $z$ is $l$. Thus, by induction hypothesis, there exists $x', y' \in K_n$ and $z'\in H = K_n[U_k]$ such that $z_l = x' z' y'$ with $\rho_1 x' \rho_1 = x'$ and $\rho_2 y' \rho_2 = y'$. Now set $x = x_l x'$, $y = y' y_l$ and $z = z' \in H = K_n[U_k]$. It follows that $z = x z_1 y$, $x \in K_n[X_1\cap X]$, $y\in K_n[X_2\cap X]$. Hence, we are done for the case $a_l \rho_2 a_l^{-1} \rho_2  \in H$. The case $a_l^{-1} \rho_1 a_l \rho_1  \in H$ also follows similarly, and the proof is complete. 
\end{proof}

Now, we set $V_3 = U_n$ and
$$V_k = \{ \alpha_{i,j} \in U_n \mid (i, j)\notin \{4, 5, \ldots, k\} \times \{1, 2, 3\} \}$$ 
for $4\leq k \leq n$. Note that $$V_n \subseteq V_{n-1}\subseteq \cdots \subseteq  V_k \subseteq V_{k-1} \subseteq \cdots \subseteq  V_3=U_n$$ and each $V_k$ is invariant under the conjugation action of $\rho_1$ and $\rho_2$.

\begin{lemma}\label{working-lemma-3}
Let $z\in K_n[V_{k-1}]$ such that $z$ satisfies condition \ref{C}. Then there exists $x \in K_n[X_1\cap X]$, $y\in K_n[X_2\cap X]$ and $z_1\in K_n[V_k]$ such that $z = x z_1 y$ and $z_1$ satisfies condition \ref{C}. 
\end{lemma}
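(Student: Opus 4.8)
The plan is to run the proof of Lemma~\ref{working-lemma-2} essentially verbatim, with the triple of ``new'' generators $\{\alpha_{1,k},\alpha_{2,k},\alpha_{3,k}\}$ replaced by $\{\alpha_{k,1},\alpha_{k,2},\alpha_{k,3}\}$. First I would dispose of the case $z\in K_n[V_k]$, where one takes $x=y=1$ and $z_1=z$; so assume $z\in K_n[V_{k-1}]\setminus K_n[V_k]$, whence $V_k\neq V_{k-1}$, and since $V_k,V_{k-1}$ are invariant under conjugation by $\rho_1$ and $\rho_2$ we get $V_{k-1}=V_k\cup\{\alpha_{k,1},\alpha_{k,2},\alpha_{k,3}\}$. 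Setting $G_j=K_n[V_k\cup\{\alpha_{k,j}\}]$ for $j=1,2,3$ and $H=G_1\cap G_2\cap G_3=K_n[V_k]$, Lemma~\ref{amalgamated product of subgroups} gives $K_n[V_{k-1}]=G_1*_H G_2*_H G_3$. Write $z=a_1a_2\cdots a_l$ in the Serre normal form of Lemma~\ref{Normal-Form-Amalgamated-Free-Product} relative to this decomposition, and induct on $l$.

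For $l=1$, put $z=a_1\in G_j\setminus H$. Expand condition~\ref{C} as $b_1b_2b_3b_4b_5b_6=1$, where $b_1=a_1$, $b_2=\widehat{\rho_2}(a_1^{-1})$, $b_3=\widehat{\rho_2\rho_1}(a_1)$, $b_4=\widehat{\rho_2\rho_1\rho_2}(a_1^{-1})$, $b_5=\widehat{\rho_1\rho_2}(a_1)$, $b_6=\widehat{\rho_1}(a_1^{-1})$. The key bookkeeping point is that $\langle\rho_1,\rho_2\rangle\cong S_3$ acts on $\{\alpha_{k,1},\alpha_{k,2},\alpha_{k,3}\}$ through the $S_3$-action on the second subscript, which is the same permutation pattern that governs $\{\alpha_{1,k},\alpha_{2,k},\alpha_{3,k}\}$ in Lemma~\ref{working-lemma-2}; hence each $b_i$ lands in exactly the same factor as there. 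When $j=2$ the resulting pattern contradicts uniqueness of the normal form (Lemma~\ref{Normal-Form-Amalgamated-Free-Product}), so $j\in\{1,3\}$. In those two cases uniqueness forces the relevant consecutive products to lie in $H$, yielding $c:=a_1\rho_2a_1^{-1}\rho_2\in H$ with $\widehat{\rho_2}(c)=c^{-1}$ (for $j=1$), respectively $c:=a_1^{-1}\rho_1a_1\rho_1\in H$ with $\widehat{\rho_1}(c)=c^{-1}$ (for $j=3$). Lemma~\ref{working-lemma} then rewrites $c$ as $\alpha\beta\rho_2\alpha^{-1}\rho_2$ (resp. $\alpha\beta\rho_1\alpha^{-1}\rho_1$) with $\beta^2=1$ and $\beta\in K_n[V_k\cap X_2]$ (resp. $K_n[V_k\cap X_1]$), and Corollary~\ref{no-conjugate} forces $\beta=1$; reading off $x$, $y$, and the element $z_1\in K_n[V_k]$ and checking $\widehat{\rho_1}(x)=x$, $\widehat{\rho_2}(y)=y$ completes the base case.

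For $l\geq2$, expand condition~\ref{C} for $z=a_1\cdots a_l$ and apply Lemma~\ref{Normal-Form-Amalgamated-Free-Product} to the resulting long relation; since $H$ is invariant under $\widehat{\rho_1}$ and $\widehat{\rho_2}$, all cancellation scenarios reduce to: either $a_l\widehat{\rho_2}(a_l^{-1})\in H$ or $a_1^{-1}\widehat{\rho_1}(a_1)\in H$. In the first case set $c=a_l\rho_2a_l^{-1}\rho_2\in H$; as in the base case Lemma~\ref{working-lemma} and Corollary~\ref{no-conjugate} give $c=d\rho_2d^{-1}\rho_2$ with $d\in K_n[V_k]$. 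Put $x_l=1$, $w_l=a_{l-1}d$, $y_l=d^{-1}a_l$, $z_l=a_1\cdots a_{l-2}w_l$; one checks $y_l\in K_n[X_2\cap X]$, that $z=x_lz_ly_l$, and hence (by the observation preceding Lemma~\ref{working-lemma-2} on how condition~\ref{C} descends through factorizations $\alpha=xzy$) that $z_l$ has length $l-1$ and still satisfies condition~\ref{C}. Apply the inductive hypothesis to $z_l$ and compose. The case $a_1^{-1}\widehat{\rho_1}(a_1)\in H$ is symmetric, exchanging $\rho_1,X_1,x$ with $\rho_2,X_2,y$.

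The only place any genuine thought is needed beyond transcribing Lemma~\ref{working-lemma-2} is verifying this factor-bookkeeping: that conjugating $\alpha_{k,j}$ by $\rho_1$ or $\rho_2$ moves it inside $\{\alpha_{k,1},\alpha_{k,2},\alpha_{k,3}\}$ exactly as $S_3$ moves $\{1,2,3\}$, so that $b_1,\dots,b_6$ fall into the three factors in the same arrangement as in the earlier lemma. Granting that, the proof is \emph{mutatis mutandis} identical, with $\alpha_{j,k}$ replaced throughout by $\alpha_{k,j}$; in particular one never needs to re-derive the $j=2$ contradiction or the $l=1$ identities afresh, only to observe that this substitution is legitimate.
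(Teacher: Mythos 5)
Your proposal is correct and follows essentially the same route as the paper: the paper's own proof of this lemma is precisely the proof of Lemma \ref{working-lemma-2} transcribed with $\alpha_{j,k}$ replaced by $\alpha_{k,j}$, and the one point you flag as needing verification --- that $\rho_1$ and $\rho_2$ permute $G_1,G_2,G_3$ in the same pattern as before (namely $\rho_1$ swaps $G_1,G_2$ and fixes $G_3$, while $\rho_2$ swaps $G_2,G_3$ and fixes $G_1$) --- is exactly the observation the paper records before running the same case analysis. All the subsequent steps (the $l=1$ trichotomy with the $G_2$ case excluded by normal-form uniqueness, the use of Lemma \ref{working-lemma} and Corollary \ref{no-conjugate} to kill $\beta$, and the length induction for $l\ge 2$) match the paper's argument.
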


\begin{proof}
We proceed by induction on $k$. The case $k=3$ holds by Lemma \ref{working-lemma-2}. We begin by noticing that if $z \in K_n[V_k]$, then we are done by taking $x = 1$, $y=1$ and $z_1 = z$. So we assume that $z \in K_n[V_{k-1}]\setminus K_n[V_k]$. Since both $V_{k-1}$ and $V_k$ are invariant under the conjugation action of 
$\rho_1$ and $\rho_2$, we have $V_{k-1} = V_k \cup \{\alpha_{k,1},\; \alpha_{k,2},\; \alpha_{k,3}\}$. We set
\begin{align*}
G_j & = K_n[V_k \cup \{\alpha_{k,j}\}],\;  1\leq j \leq 3, & H & = G_1 \cap G_2 \cap G_3 = K_n[V_k],&\\
G & = G_1 *_H G_2 *_H G_3 = K_n[V_{k-1}].&
\end{align*}

It is not difficult to check that
\begin{align*}
\rho_1 G_1 \rho_1 &= G_2,\;  \rho_1 G_2 \rho_1 = G_1,\; \rho_1 G_3 \rho_1 = G_3,&\\
\rho_2 G_1 \rho_2 &= G_1,\;  \rho_2 G_2 \rho_2 = G_3,\; \rho_2 G_3 \rho_2 = G_2.&
\end{align*}

By Lemma \ref{Normal-Form-Amalgamated-Free-Product}, we can write $z = a_1 a_2 \ldots a_l$ for some integer $l\geq 1$ such that 
\begin{itemize}
\item[(1)] for each $1\leq i \leq l$, there exists $j = j(i) \in \{1,\; 2,\; 3\}$ such that $a_i \in G_j \setminus H$, and
\item[(2)] $j(i) \neq j(i+1)$ for all $1\leq i \leq l-1$. 
\end{itemize}

As in the preceding lemma, we now argue by induction on $l$, the length of $z=a_1 a_2 \ldots a_l$. Suppose that $z=a_1$. Then, either $a_1 \in G_1\setminus H$ or $a_1 \in G_2\setminus H$ or $a_1 \in G_3\setminus H$. We first suppose that $z = a_1 \in G_1\setminus H$ and set
\begin{align*}
b_1 &= a_1 \in G_1\setminus H ,& b_2 & =  \rho_2 a_1^{-1} \rho_2 \in G_1 \setminus H,&
b_3 & =  \rho_2 \rho_1 a_1 \rho_1 \rho_2 \in G_3 \setminus H,&\\
b_4 & =  \rho_2 \rho_1 \rho_2 a_1^{-1} \rho_2 \rho_1 \rho_2 \in G_3 \setminus H,& 
b_5 & = \rho_1 \rho_2 a_1 \rho_2 \rho_1 \in G_2 \setminus H ,&
b_6 & =  \rho_1 a_1^{-1} \rho_1 \in G_2 \setminus H.&
\end{align*}

Since $z = a_1$ satisfies condition  \ref{C} we have $b_1 b_2 b_3 b_4 b_5 b_6 = 1$. In view of Lemma \ref{Normal-Form-Amalgamated-Free-Product} and the fact that $H$ is invariant under the conjugation action of $\rho_1$ and $\rho_2$, this is possible only if $b_1 b_2$, $b_3 b_4$ and $b_5 b_6$ all lie in $H$. Set $c=a_1 \rho_2 a_1^{-1} \rho_2 \in H$.
\par
Note that $$\rho_2 c \rho_2 = \rho_2 a_1 \rho_2 a_1^{-1} \rho_2 \rho_2 = \rho_2 a_1 \rho_2 a_1^{-1} = c^{-1}.$$ 
Hence, by Lemma \ref{working-lemma}, there exists $\alpha \in H = K_n[V_k]$ and $\beta \in K_n [V_k \cap X_2]$ such that $c = \alpha \beta \rho_2 \alpha^{-1} \rho_2$ with $\beta^2 = 1$. This gives us $a_1 \rho_2 a_1^{-1} \rho_2 = c = \alpha \beta \rho_2 \alpha^{-1} \rho_2$ and consequently, $$ \rho_2 = a_1^{-1} \alpha \beta \rho_2 \alpha^{-1}a_1,$$ where $\alpha^{-1}a_1 \in K_n$. By Corollary \ref{no-conjugate}, we have $\beta=1$ which means that $c= \alpha \rho_2 \alpha^{-1} \rho_2$. Now set $y= \alpha^{-1}a_1$, $x= 1,$ and $z_1 = \alpha \in H = K_n[V_k].$ This gives us $z = a_1 = x z_1 y,$ $x \in K_n[X_1\cap X]$ and $\rho_2 y \rho_2 = \rho_2 \alpha^{-1}a_1  \rho_2 = (\rho_2 \alpha^{-1} \rho_2) (\rho_2 a_1 \rho_2) = (\alpha^{-1}c) (c^{-1}a_1) = y$, i.e., $y \in K_n[X_2\cap X]$. Thus, we are done for the case $ a_1 \in G_1 \setminus H$. 
\medskip

Next suppose that $a_1 \in G_2\setminus H$. We again set
\begin{align*}
b_1 &= a_1 \in G_2\setminus H ,& b_2 & = \rho_2 a_1^{-1} \rho_2 \in G_3 \setminus H,&\\ 
b_3 & = \rho_2 \rho_1 a_1 \rho_1 \rho_2 \in G_1 \setminus H,& b_4 & = \rho_2 \rho_1 \rho_2 a_1^{-1} \rho_2 \rho_1 \rho_2 \in G_2 \setminus H ,&\\
b_5 & = \rho_1 \rho_2 a_1 \rho_2 \rho_1 \in G_3 \setminus H ,& b_6 & = \rho_1 a_1^{-1} \rho_1 \in G_1 \setminus H.&
\end{align*}

Since $z= a_1$ satisfies condition  \ref{C} we have $b_1 b_2 b_3 b_4 b_5 b_6 = 1$. This leads to a contradiction due to Lemma \ref{Normal-Form-Amalgamated-Free-Product}. Finally, suppose that $a_1 \in G_3\setminus H$. We again set
\begin{align*}
b_1 &= a_1 \in G_3\setminus H ,& b_2 & = \rho_2 a_1^{-1} \rho_2 \in G_2 \setminus H,&\\ 
b_3 & = \rho_2 \rho_1 a_1 \rho_1 \rho_2 \in G_2 \setminus H,& b_4 & = \rho_2 \rho_1 \rho_2 a_1^{-1} \rho_2 \rho_1 \rho_2 \in G_1 \setminus H ,&\\
b_5 & = \rho_1 \rho_2 a_1 \rho_2 \rho_1 \in G_1 \setminus H ,& b_6 & = \rho_1 a_1^{-1} \rho_1 \in G_3 \setminus H.&
\end{align*}

Since $a_1$ satisfies condition  \ref{C} again we have $b_1 b_2 b_3 b_4 b_5 b_6 = 1$. It follows that both $b_2 b_3$ and $b_4 b_5$ is in $H$. Hence, we have $a_1^{-1} \rho_1 a_1 \rho_1 = c$ (say) $ \in H$.
Note that $\rho_1 c \rho_1 = c^{-1}$. Hence, by Lemma \ref{working-lemma}, there exists $d\in H = K_n[V_k]$ and $t\in K_n [V_k \cap X_1]$ such that $c = d t \rho_1 d^{-1} \rho_1$ with $t^2 = 1$. Thus, we have $a_1^{-1} \rho_1 a_1 \rho_1 = c = d t \rho_1 d^{-1} \rho_1$, and consequently  $ \rho_1 = a_1 d t \rho_1 d^{-1}a_1^{-1}$, where $a_1 d \in K_n$. By Corollary \ref{no-conjugate}, we have $t = 1$. Now, set $x = a_1d$, $y = 1$ and $z_1 = d^{-1}\in K_n[V_k]$. It follows that $z = x z_1 y$ and $\rho_1 x \rho_1 = \rho_1  a_1d  \rho_1 = (\rho_1  a_1\rho_1) (\rho_1 d  \rho_1) = (a_1 c)(c^{-1}d) = a_1d = x$, i.e., $ x \in K_n[X_1\cap X]$. Hence, we are done for the case $l=1$.
\medskip

Let us now suppose that $l\geq 2$ and that the induction hypothesis holds. Since $z=a_1a_2 \ldots a_l$ satisfy the condition  \ref{C}, we have 
$$ (a_1\ldots a_l)~\widehat{\rho_2}(a_l^{-1} \ldots a_1^{-1})~\widehat{\rho_2\rho_1}(a_1\ldots a_l)~\widehat{\rho_2 \rho_1 \rho_2}(a_l^{-1} \ldots a_1^{-1})~\widehat{\rho_1\rho_2}(a_1\ldots a_l)~\widehat{\rho_1}(a_l^{-1} \ldots a_1^{-1})=1.$$
Then, by Lemma \ref{Normal-Form-Amalgamated-Free-Product}, we have either of the following:
\begin{enumerate}
\item $a_l~\widehat{\rho_2}(a_l^{-1}) \in H$,
\item $\widehat{\rho_2}(a_1^{-1})~\widehat{\rho_2\rho_1}(a_1) \in H,$
\item $\widehat{\rho_2\rho_1}(a_l)~\widehat{\rho_2 \rho_1 \rho_2}(a_l^{-1})=\widehat{\rho_2\rho_1}(a_l)~\widehat{\rho_1 \rho_2 \rho_1}(a_l^{-1}) \in H,$
\item $\widehat{\rho_2 \rho_1 \rho_2}(a_1^{-1})~\widehat{\rho_1\rho_2}(a_1)=\widehat{\rho_1 \rho_2 \rho_1}(a_1^{-1})~\widehat{\rho_1\rho_2}(a_1) \in H,$
\item $\widehat{\rho_1\rho_2}(a_l)~\widehat{\rho_1}(a_l^{-1}) \in H.$
\end{enumerate}
Since $H$ is invariant under the conjugation action of $\rho_1$ and $\rho_2$, either of the above possibilities can be reduced to the fact that either $a_l ~\widehat{\rho_2}(a_l^{-1}) \in H$ or $a_1^{-1}~\widehat{\rho_1}(a_1) \in H.$
\medskip

We first suppose that $c = a_l \rho_2 a_l^{-1} \rho_2  \in H$. So, we get $\rho_2 c \rho_2 = c^{-1}.$ Repeating the above arguments, we get $c= d \rho_2 d^{-1} \rho_2$, for some $d\in H = K_n[V_k]$.
Set $x_l = 1$, $w_l = a_{l-1}d$, $y_l = d^{-1}a_l$ and $z_l = a_1 a_2 \ldots a_{l-2} w_l.$ Note that $z = x_l z_l y_l$. It is easy to check that $y_l\in K_n[X_2\cap X]$, and hence $z_l$ satisfies condition \ref{C}. Note that the length of $z_l$ is $l-1$, whereas the length of $z$ is $l$. Hence by induction hypothesis, there exists $x', y' \in K_n$ and $z'\in H = K_n[V_k]$ such that $z_l = x' z' y'$ with $\rho_1 x' \rho_1 = x'$, $\rho_2 y' \rho_2 = y'$. Now set $x = x_l x'$, $y = y' y_l$ and $z = z' \in H = K_n[V_k]$. It follows that $z = x z_1 y$, $x \in K_n[X_1\cap X]$, $y\in K_n[X_2\cap X]$. Hence, we are done for the case $a_l \rho_2 a_l^{-1} \rho_2  \in H$. The other case follow in a similar manner.
\end{proof}

Now, we set
\begin{equation}\label{W11}
W_{1,1} = \{ \alpha_{i, j}\in X \mid (i ,j) \in \{ (1, 2), (2, 3), (3, 1)\} \}
\end{equation}
and
\begin{equation}\label{W12}
W_{1, 2} = \{ \alpha_{i, j} \in X \mid (i, j) \in \{ (2, 1), (3, 2), (1, 3) \} \}.
\end{equation}
It is easy to check that $\rho_1(W_{1,1})\rho_1 = \rho_2(W_{1,1})\rho_2 = W_{1, 2}$ and $\rho_1(W_{1, 2})\rho_1 = \rho_2(W_{1, 2})\rho_2 = W_{1,1}$.

\begin{lemma}\label{working-lemma-4}
If $z\in K_n[W_{1,1}]\ast K_n[W_{1,2}]$ satisfies condition \ref{C}, then $z = 1$.
\end{lemma}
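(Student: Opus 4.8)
The plan is to carry out the whole argument inside the free product $G := K_n[W_{1,1}]\ast K_n[W_{1,2}]$, using the syllable length with respect to this decomposition. First I would collect the structural facts we need. Any two of the generators $\alpha_{1,2},\alpha_{2,3},\alpha_{3,1}$ share an index, so none of them commute in $K_n$; hence $K_n[W_{1,1}]$, and likewise $K_n[W_{1,2}]$, is a free product of copies of $\mathbb{Z}_2$. Since conjugation by $\rho_1$ maps $W_{1,1}$ bijectively onto $W_{1,2}$, we have $|W_{1,1}|=|W_{1,2}|$; thus either $G=1$, in which case there is nothing to prove, or both free factors $A:=K_n[W_{1,1}]$ and $B:=K_n[W_{1,2}]$ are nontrivial and $G=A\ast B$ is a genuine free product, so syllable length is well defined. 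From the relations $\rho_1(W_{1,1})\rho_1=\rho_2(W_{1,1})\rho_2=W_{1,2}$ recorded before the lemma (and their mirror images), conjugation by $\rho_1$ and by $\rho_2$ are automorphisms of $G$ interchanging the two free factors $A\leftrightarrow B$; consequently conjugation by $\rho_2\rho_1$ is an automorphism of $G$ that preserves each of $A$ and $B$. Each such automorphism sends reduced sequences to reduced sequences, hence preserves syllable length, and one can read off into which factor the first and the last syllable of the image fall. Note also that every element of $\langle \rho_1,\rho_2\rangle$ preserves $G$, so the entire left-hand side of condition \ref{C} lies in $G$ and \ref{C} is an equation in $G$.

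Next I would pass to the equivalent form of condition \ref{C} stated in the text: writing $\gamma:=[z,\rho_2]$ and using $[\rho_2,z]=\gamma^{-1}$, condition \ref{C} becomes
\[
\gamma\cdot\widehat{\rho_2\rho_1}(\gamma)=\widehat{\rho_1}(\gamma).
\]
Now suppose $z\neq 1$ and write $z=u_1u_2\cdots u_\ell$ in reduced form with $\ell\ge 1$; without loss of generality $u_1\in A$ (the case $u_1\in B$ is symmetric). Because $\widehat{\rho_2}$ swaps $A$ and $B$, the last syllable $u_\ell$ of $z$ and the first syllable $\widehat{\rho_2}(u_\ell^{-1})$ of $\widehat{\rho_2}(z^{-1})$ lie in opposite factors, so no cancellation occurs at the junction: $\gamma=z\,\widehat{\rho_2}(z^{-1})$ is reduced of syllable length $2\ell$, with first syllable in $A$ and last syllable in $B$. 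Applying the factor-preserving automorphism $\widehat{\rho_2\rho_1}$, the element $\widehat{\rho_2\rho_1}(\gamma)$ is again reduced of syllable length $2\ell$, with first syllable in $A$ and last syllable in $B$. Hence in the product $\gamma\cdot\widehat{\rho_2\rho_1}(\gamma)$ the last syllable of $\gamma$ (in $B$) is immediately followed by the first syllable of $\widehat{\rho_2\rho_1}(\gamma)$ (in $A$): again no cancellation, so $\gamma\cdot\widehat{\rho_2\rho_1}(\gamma)$ is reduced of syllable length $4\ell$. On the other hand $\widehat{\rho_1}$ is an automorphism of $G$, so $\widehat{\rho_1}(\gamma)$ has syllable length $2\ell$. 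Comparing lengths in the displayed identity forces $4\ell=2\ell$, whence $\ell=0$, a contradiction; therefore $z=1$.

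I expect the only delicate point to be the bookkeeping in the length computation — keeping straight which free factor each syllable occupies under the factor-swapping automorphisms $\widehat{\rho_1},\widehat{\rho_2}$ versus the factor-preserving automorphism $\widehat{\rho_2\rho_1}$, which is precisely what legitimizes the two ``no cancellation at the junction'' assertions. Once those are justified, the syllable-length count closes the argument immediately. (One should also verify that the equivalent reformulations of condition \ref{C} quoted in the text are applied correctly, but this is routine algebra.)
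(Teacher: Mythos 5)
Your proof is correct, and it takes a genuinely different route from the paper's. The paper also works with the normal form in the free product $G_1\ast G_2$ (with $G_1=K_n[W_{1,1}]$, $G_2=K_n[W_{1,2}]$ and trivial amalgamated subgroup), but it argues by induction on the syllable length $l$ of $z$: the base case $l=1$ unpacks the six-factor form of condition \ref{C} into elements $b_1,\dots,b_6$ alternating between the two factors and derives a contradiction from $b_1\cdots b_6=1$ via Lemma \ref{Normal-Form-Amalgamated-Free-Product}, while the inductive step extracts from the same normal-form analysis that either $a_l\widehat{\rho_2}(a_l^{-1})=1$ or $a_l^{-1}\widehat{\rho_1}(a_l)=1$, strips the last syllable, and recurses. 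You instead pass to the commutator reformulation $\gamma\cdot\widehat{\rho_2\rho_1}(\gamma)=\widehat{\rho_1}(\gamma)$ with $\gamma=[z,\rho_2]$ (which the paper records as an equivalent form of \ref{C} but does not exploit here) and close the argument with a single syllable-length count: $\gamma$ is reduced of length $2\ell$ with its first and last syllables in opposite factors, $\widehat{\rho_2\rho_1}$ preserves the factors so no cancellation occurs at the middle junction, giving length $4\ell$ on the left against $2\ell$ on the right. The two "no cancellation" checks you flag are exactly the points that need care, and they do hold: the junction inside $\gamma=z\,\widehat{\rho_2}(z^{-1})$ pairs $u_\ell$ with $\widehat{\rho_2}(u_\ell^{-1})$, which lie in opposite factors, and the junction in $\gamma\cdot\widehat{\rho_2\rho_1}(\gamma)$ pairs the last syllable $\widehat{\rho_2}(u_1^{-1})$ of $\gamma$ with the first syllable $\widehat{\rho_2\rho_1}(u_1)$ of its image, again in opposite factors. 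Your argument is shorter, avoids the induction and the case analysis over which factor $a_1$ lies in, and makes the mechanism (a length obstruction) more transparent; the paper's inductive scheme has the mild advantage of being the same template it reuses for Lemmas \ref{working-lemma-2} and \ref{working-lemma-3}, where the amalgamated subgroup is nontrivial and a pure length count would not suffice.
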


\begin{proof}
Suppose that $z\neq 1$. Set $G_1 = K_n[W_{1,1}]$, $G_2 = K_n[W_{1,2}]$ and $G = G_1 \ast G_2$. Due to Lemma \ref{Normal-Form-Amalgamated-Free-Product}, we can write $z = a_1 a_2 \ldots a_l$ for some integer $l\geq 1$ such that 
\begin{itemize}
\item[(1)] for each $1\leq i \leq l$, there exists $j = j(i) \in \{1, 2 \}$ such that $a_i \in G_j \setminus \{1\}$, and
\item[(2)] $j(i) \neq j(i+1)$ for all $1\leq i \leq l-1$. 
\end{itemize}
\medskip

We now argue by induction on $l$ to arrive at a contradiction. Suppose that $l=1$. Then either $a_1 \in G_1\setminus \{1\}$ or $a_1 \in G_2\setminus \{1\}$. We can assume that $a_1 \in G_1\setminus \{1\}$, since the case $a_1 \in G_2\setminus \{1\}$ goes along parallel lines. We now set
\begin{align*}
b_1 &= a_1 \in G_1\setminus \{1\},& b_2 & =  \rho_2 a_1^{-1} \rho_2 \in G_2 \setminus \{1\},&\\ 
b_3 & =  \rho_2 \rho_1 a_1 \rho_1 \rho_2 \in G_1 \setminus \{1\},& b_4 & =  \rho_2 \rho_1 \rho_2 a_1^{-1} \rho_2 \rho_1 \rho_2 \in G_2 \setminus \{1\},&\\
b_5 & = \rho_1 \rho_2 a_1 \rho_2 \rho_1 \in G_1 \setminus \{1\},& b_6 & =  \rho_1 a_1^{-1} \rho_1 \in G_2 \setminus \{1\}.&
\end{align*}

Since $z = a_1$ satisfies condition \ref{C}, we have $b_1 b_2 b_3 b_4 b_5 b_6 = 1$. But, this leads to a contradiction due to Lemma \ref{Normal-Form-Amalgamated-Free-Product}. 
\par
Now suppose that $l\geq 2$ and induction hypothesis holds. Due to  Lemma \ref{Normal-Form-Amalgamated-Free-Product}, we have either 
$a_l^{-1} \rho_1 a_l \rho_1  = 1$ or $a_l \rho_2 a_l^{-1} \rho_2 =1$.
We suppose that $a_l \rho_2 a_l^{-1} \rho_2 = 1$, since the case $a_l^{-1} \rho_1 a_l \rho_1  = 1$ also goes in parallel lines. Thus, we have $\rho_2 a_l \rho_2 = a_l.$ Set $x = 1$, $w = a_1 a_2 \ldots a_{l-1}\in G$, and $y = a_l$. Note that $z = x w y$ and $w\neq 1$ (since length of $w$ is non-zero). Since  $\rho_2 a_l \rho_2 = a_l$, it follows that $y=a_l \in K_n[X_2]$, and hence $w$ satisfies condition \ref{C}. Note that the length of $w$ is $l-1$, whereas the length of $z$ is $l$. Hence by induction hypothesis, $w = 1$, a contradiction.
\end{proof}

\begin{lemma}\label{working-lemma-5}
Let $n\geq 3$ and $X$ a subset of $\mathcal{S}$ that is invariant under the conjugation action of $\rho_1$ and $\rho_2$. Suppose $\phi: S_n \rightarrow VT_n$ is a homomorphism such that $\theta \phi$ is identity on $S_n$ with $\phi(\tau_1) = \rho_1$ and $\phi(\tau_2) = \alpha \rho_2 \alpha^{-1}$ for some $\alpha \in K_n[X]$. Then there exists $\alpha_1 \in K_n[X_1]$ and $\alpha_2\in K_n[X_2]$ such that $\alpha = \alpha_1 \alpha_2$.
\end{lemma}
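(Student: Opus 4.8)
The plan is to first extract from the braid relation that $\alpha$ satisfies condition~\ref{C}, and then to run the reduction machinery of Lemmas~\ref{working-lemma-2}, \ref{working-lemma-3} and \ref{working-lemma-4} to strip $\alpha$ of its ``$K_n[X_1]$-part'' on the left and its ``$K_n[X_2]$-part'' on the right. First I would observe that, since $\phi$ is a homomorphism, $\phi(\tau_1)=\rho_1$ and $\phi(\tau_2)=\alpha\rho_2\alpha^{-1}$ satisfy $\phi(\tau_1)\phi(\tau_2)\phi(\tau_1)=\phi(\tau_2)\phi(\tau_1)\phi(\tau_2)$; since both are involutions, this is equivalent to $(\rho_1\,\alpha\rho_2\alpha^{-1})^3=1$. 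Expanding this identity in $VT_n=K_n\rtimes S_n$, using $\rho_iw\rho_i=\tau_i(w)$ for $w\in K_n$ and $\rho_1\rho_2\rho_1=\rho_2\rho_1\rho_2$, the $S_n$-component is automatic and the $K_n$-component collapses exactly to
$$\alpha\,(\rho_2\alpha^{-1}\rho_2)\,(\rho_2\rho_1\alpha\rho_1\rho_2)\,(\rho_2\rho_1\rho_2\alpha^{-1}\rho_2\rho_1\rho_2)\,(\rho_1\rho_2\alpha\rho_2\rho_1)\,(\rho_1\alpha^{-1}\rho_1)=1,$$
that is, $\alpha$ satisfies condition~\ref{C}. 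For $n=3$ we have $X_1=X_2=\varnothing$, so the assertion is that $\alpha=1$; since $S_3$ acts transitively on $\mathcal S$, the $\langle\rho_1,\rho_2\rangle$-invariant set $X$ is either empty (and then $\alpha=1$ trivially) or all of $\mathcal S=W_{1,1}\sqcup W_{1,2}$, in which case $K_3=K_3[W_{1,1}]\ast K_3[W_{1,2}]$ by Lemma~\ref{amalgamated product of subgroups} and $\alpha=1$ by Lemma~\ref{working-lemma-4}. From now on assume $n\ge4$.

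Starting from $\alpha\in K_n[X]=K_n[U_3]$ satisfying condition~\ref{C}, I would apply Lemma~\ref{working-lemma-2} successively for $k=4,5,\dots,n$. Each application rewrites the current middle element as $x\,z_1\,y$ with $x\in K_n[X_1\cap X]$, $y\in K_n[X_2\cap X]$ and $z_1$ in the next smaller parabolic $K_n[U_k]$ again satisfying condition~\ref{C}; collecting the factors yields $\alpha=\bar x\,z\,\bar y$ with $\bar x\in K_n[X_1\cap X]$, $\bar y\in K_n[X_2\cap X]$ and $z\in K_n[U_n]$ satisfying condition~\ref{C}. Viewing $z\in K_n[V_3]=K_n[U_n]$, I would then apply Lemma~\ref{working-lemma-3} successively for $k=4,5,\dots,n$ to get $z=\bar x'\,z'\,\bar y'$ with $\bar x'\in K_n[X_1\cap X]$, $\bar y'\in K_n[X_2\cap X]$ and $z'\in K_n[V_n]$ satisfying condition~\ref{C}.

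It remains to pin down $z'$. From the definitions, $V_n$ consists of the $\alpha_{i,j}\in X$ with $\{i,j\}\subseteq\{1,2,3\}$ or $\{i,j\}\subseteq\{4,\dots,n\}$, so $V_n=(W_{1,1}\cup W_{1,2})\sqcup V_n''$ where $V_n''=\{\alpha_{i,j}\in X\mid i,j\in\{4,\dots,n\}\}$; any generator of $W_{1,1}\cup W_{1,2}$ and any generator of $V_n''$ involve four distinct indices and hence commute, so $K_n[V_n]=\big(K_n[W_{1,1}]\ast K_n[W_{1,2}]\big)\times K_n[V_n'']$. Write $z'=z_1'z_2'$ accordingly. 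Since $\rho_1,\rho_2$ fix $4,\dots,n$ they centralise $K_n[V_n'']$, and $z_2'$ commutes with $z_1'$ and with all of its $\langle\rho_1,\rho_2\rangle$-conjugates (which stay in $K_n[W_{1,1}]\ast K_n[W_{1,2}]$ because conjugation by $\rho_1,\rho_2$ interchanges $W_{1,1}$ and $W_{1,2}$); substituting $z'=z_1'z_2'$ into condition~\ref{C}, the six occurrences of $z_2'^{\pm1}$ telescope to $1$ and leave precisely the condition~\ref{C} expression for $z_1'$, which therefore vanishes. By Lemma~\ref{working-lemma-4}, $z_1'=1$, so $z'=z_2'\in K_n[V_n'']$; and every $\alpha_{i,j}$ with $i,j\ge4$ lies in both $X_1$ and $X_2$, so $z'\in K_n[X_1]\cap K_n[X_2]$ by Lemma~\ref{intersection of parabolic subgroups}. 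Setting $\alpha_1=\bar x\,\bar x'\,z'\in K_n[X_1]$ and $\alpha_2=\bar y'\,\bar y\in K_n[X_2]$ gives $\alpha=\bar x\,\bar x'\,z'\,\bar y'\,\bar y=\alpha_1\alpha_2$, as desired.

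The main obstacle is genuinely the first step: correctly translating the braid relation $\phi(\tau_1)\phi(\tau_2)\phi(\tau_1)=\phi(\tau_2)\phi(\tau_1)\phi(\tau_2)$ into condition~\ref{C}, as this requires a careful (if elementary) manipulation in the semidirect product $K_n\rtimes S_n$, and it is exactly what makes the three technical lemmas \ref{working-lemma-2}, \ref{working-lemma-3}, \ref{working-lemma-4} applicable. Once $\alpha$ is known to satisfy condition~\ref{C}, the heavy combinatorial work has already been packaged into those lemmas, and what remains is the routine bookkeeping of how the peeled-off factors compose and the decomposition of $K_n[V_n]$ into its $\{1,2,3\}$- and $\{4,\dots,n\}$-parts.
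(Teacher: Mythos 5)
Your proposal is correct and follows essentially the same route as the paper: extract condition \ref{C} from the braid relation, peel off $K_n[X_1\cap X]$- and $K_n[X_2\cap X]$-factors by iterating Lemmas \ref{working-lemma-2} and \ref{working-lemma-3}, and then kill the $\{1,2,3\}$-part of the residue via the splitting $K_n[V_n]=\bigl(K_n[W_{1,1}]\ast K_n[W_{1,2}]\bigr)\times K_n[W_2]$ and Lemma \ref{working-lemma-4}. The only (harmless) differences are that you treat $n=3$ explicitly and absorb the leftover central factor into $\alpha_1$ rather than $\alpha_2$.
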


\begin{proof}
Since $(\tau_2 \tau_1)^3 = 1$, we have $(\alpha \rho_2 \alpha^{-1} \rho_1)^3 =1$, which upon expansion shows that $\alpha$ satisfies condition \ref{C}. We proceed along the following three steps.
\medskip

Step (1): We first show by induction on $3\leq k \leq n$ that there exists $x \in K_n[X_1\cap X]$, $y\in K_n[X_2\cap X]$, and $z\in K_n[U_k]$ such that $\alpha = x z y$ and $z$ satisfies condition \ref{C}. The case $k=3$ is trivial, since we can take $x = y = 1$ and $z= \alpha$. Suppose that $4\leq k \leq n$ and induction hypothesis holds, i.e., there exists $x_1 \in K_n[X_1\cap X]$, $y_1 \in K_n[X_2\cap X]$ and $z_1\in K_n[U_{k-1}]$ such that $\alpha = x_1 z_1 y_1$ and $z_1$ satisfies condition \ref{C}. By Lemma \ref{working-lemma-2}, we get $x_2 \in K_n[X_1\cap X]$, $y_2\in K_n[X_2\cap X]$, and $z_2\in K_n[U_k]$ such that $z_2$ satisfies condition \ref{C} and $z_1 = x_2 z_2 y_2$. Thus, we have $\alpha = x_1 x_2 z_2 y_2 y_1$. Taking $x = x_1x_2$, $z=z_2$ and $y=y_2y_1$ completes Step (1).
\medskip

Step (2): We now show by induction on $3\leq k \leq n$ that there exists $x \in K_n[X_1\cap X]$, $y\in K_n[X_2\cap X]$, and $z\in K_n[V_k]$ such that $\alpha = x z y$ and $z$ satisfies condition \ref{C}. Since $V_3 = U_n$, we are done for the case $k =3$ from Step (1). Now suppose that $4\leq k \leq n$ and induction hypothesis holds, i.e., there exists $x_1 \in K_n[X_1\cap X]$, $y_1 \in K_n[X_2\cap X]$, $z_1\in K_n[V_{k-1}]$ such that $\alpha = x_1 z_1 y_1$ and $z_1$ satisfies condition \ref{C}. By Lemma \ref{working-lemma-3}, we get $x_2 \in K_n[X_1\cap X]$, $y_2\in K_n[X_2\cap X]$, and $z_2\in K_n[V_k]$ such that $z_2$ satisfies condition \ref{C} and $z_1 = x_2 z_2 y_2$. Thus, we have $\alpha = x_1 x_2 z_2 y_2 y_1$. Taking $x = x_1x_2$, $z=z_2$ and $y=y_2y_1$ completes Step (2).
\medskip

Step (3): We now set
\begin{align*}
W_1 & = \{ \alpha_{i,j} \in X \mid (i, j)\in \{1, 2, 3\} \times \{1, 2, 3\} \} ~\text{and}&\\
W_2 &= \{ \alpha_{i,j} \in X \mid (i, j)\in \{4, 5, \ldots, k\} \times  \{4, 5, \ldots,k\} \}.&
\end{align*}

It follows that $V_n = W_1 \sqcup W_2$ and hence
$$K_n[V_n] = K_n[W_1] \times K_n[W_2].$$ 
Also note that 
\begin{align*}
K_n[W_1] & = K_n[W_{1,1}]\ast K_n[W_{1,2}] ~\text{and} & \\
 K_n[W_2] & = K_n[X_1\cap X] \cap K_n[X_2\cap X],&
\end{align*}
where $W_{1,1}$ and $W_{1,2}$ are given by \eqref{W11} and \eqref{W12}. By Step (2), we have $x \in K_n[X_1\cap X]$, $y\in K_n[X_2\cap X]$ and $z\in K_n[V_n] = K_n[W_1] \times K_n[W_2]$ such that $\alpha = xzy$, and $z$ satisfies condition \ref{C}. Let $z_1\in K_n[W_1]$ and $z_2\in K_n[W_2]$ such that $z = z_1 z_2 = 1z_1 z_2$. It is easy to check that that $z_2\in K_n[X_2]$, and hence $z_1$ satisfies condition \ref{C}. Now, Lemma \ref{working-lemma-4} gives $z_1 = 1$. Thus, we get $\alpha = xz_2y$, where $x \in K_n[X_1\cap X]$, $y\in K_n[X_2\cap X]$ and $z_2\in K_n[W_2] = K_n[X_1\cap X] \cap K_n[X_2\cap X].$ Taking $\alpha_1 = x$ and $\alpha_2 = z_2y$ completes the proof of the lemma.
\end{proof}

Lemma \ref{working-lemma-5} can be generalised as follows.

\begin{lemma}\label{working-lemma-6}
Let $n\geq 3$ and $X$ a subset of $\mathcal{S}$ that is invariant under the conjugation action of both $\rho_k$ and $\rho_{k+1}$ for some fixed $1\leq k \leq n-2$. Suppose that $\phi: S_n \rightarrow VT_n$ is a homomorphism such that $\theta \phi$ is identity on $S_n$, $\phi(\tau_k) = \rho_k$ and $\phi(\tau_{k+1}) = \alpha \rho_{k+1} \alpha^{-1}$ for some $\alpha \in K_n[X]$. Then there exists $\alpha_1 \in K_n[X_k\cap X]$ and $\alpha_2\in K_n[X_{k+1}\cap X]$ such that $\alpha = \alpha_1 \alpha_2$.
\end{lemma}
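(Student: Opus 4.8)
The plan is to reduce the statement to the case $k=1$, which is precisely Lemma \ref{working-lemma-5}, by conjugating with a suitable permutation word; this is the same device already used in the proofs of Proposition \ref{conjugation-action-of-rho1}, Lemma \ref{working-lemma} and Proposition \ref{no-conjugation}. First I would choose a word $w \in \langle \rho_1, \ldots, \rho_{n-1}\rangle$ realizing $w \rho_1 w^{-1} = \rho_k$ and $w \rho_2 w^{-1} = \rho_{k+1}$ \emph{simultaneously}. Such a $w$ exists: one can pick a permutation $\sigma \in S_n$ with $\sigma(1) = k$, $\sigma(2) = k+1$, $\sigma(3) = k+2$ (the remaining values being arbitrary), which conjugates $\tau_1, \tau_2$ to $\tau_k, \tau_{k+1}$, and then set $w = \lambda(\sigma)$. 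Put $g = \theta(w)$, so that $g\tau_1 g^{-1} = \tau_k$ and $g\tau_2 g^{-1} = \tau_{k+1}$.

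Next I would consider the homomorphism $\psi := \widehat{w}^{-1}\,\phi\,\widehat{g} \colon S_n \to VT_n$. Since $\theta\,\widehat{w}^{-1} = \widehat{g}^{-1}\,\theta$, one gets $\theta\psi = \widehat{g}^{-1}(\theta\phi)\widehat{g} = \widehat{g}^{-1}\widehat{g} = \id$. Moreover $\psi(\tau_1) = \widehat{w}^{-1}\phi(g\tau_1 g^{-1}) = \widehat{w}^{-1}\phi(\tau_k) = w^{-1}\rho_k w = \rho_1$ and $\psi(\tau_2) = \widehat{w}^{-1}\phi(\tau_{k+1}) = (w^{-1}\alpha w)\,\rho_2\,(w^{-1}\alpha w)^{-1}$. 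Set $\alpha' = w^{-1}\alpha w$ and $Y = w^{-1}Xw = \{\,\alpha_{\theta(w)^{-1}(i),\theta(w)^{-1}(j)} \mid \alpha_{i,j}\in X\,\}$. Using $\rho_k X \rho_k = \rho_{k+1}X\rho_{k+1} = X$, the identity $\rho_1 Y \rho_1 = w^{-1}(w\rho_1 w^{-1})X(w\rho_1 w^{-1})w = w^{-1}\rho_k X\rho_k w = Y$ holds, and likewise $\rho_2 Y \rho_2 = Y$; by Remark \ref{rephrasing-action} we have $\alpha' \in K_n[Y]$. Thus $\psi$, $Y$, $\alpha'$ satisfy the hypotheses of Lemma \ref{working-lemma-5} for the index $1$, and there exist $\alpha_1' \in K_n[X_1 \cap Y]$ and $\alpha_2' \in K_n[X_2 \cap Y]$ with $\alpha' = \alpha_1'\alpha_2'$ (note that although Lemma \ref{working-lemma-5} is stated with $K_n[X_1]$ and $K_n[X_2]$, its proof in fact produces factors lying in $K_n[X_1 \cap Y]$ and $K_n[X_2 \cap Y]$).

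Finally I would conjugate back: $\alpha = w\alpha' w^{-1} = (w\alpha_1'w^{-1})(w\alpha_2'w^{-1})$. By Lemma \ref{relations-between-Xk}, $wX_1 w^{-1} = X_k$ and $wX_2 w^{-1} = X_{k+1}$, while $wYw^{-1} = X$, so $w\alpha_1'w^{-1} \in K_n[X_k \cap X]$ and $w\alpha_2'w^{-1} \in K_n[X_{k+1}\cap X]$. Taking $\alpha_1 = w\alpha_1'w^{-1}$ and $\alpha_2 = w\alpha_2'w^{-1}$ yields $\alpha = \alpha_1\alpha_2$ with the required memberships, which finishes the proof; when $k=1$ one simply takes $w = 1$ and the statement is Lemma \ref{working-lemma-5} verbatim.

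The only genuinely delicate points are the existence of a single word $w$ realizing both conjugations $\rho_1 \mapsto \rho_k$ and $\rho_2 \mapsto \rho_{k+1}$ at once (which forces the choice $\sigma(2)=k+1$, hence $\sigma(1)=k$, $\sigma(3)=k+2$), and the careful tracking of the subset $X$ and the element $\alpha$ through the conjugation by $w$; everything else is the routine bookkeeping of inner automorphisms and parabolic subgroups carried out several times earlier in this section.
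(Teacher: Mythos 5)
Your proposal is correct and follows essentially the same route as the paper: reduce to the case $k=1$ by conjugating with a word $w\in\langle\rho_1,\ldots,\rho_{n-1}\rangle$ realizing $\rho_1\mapsto\rho_k$ and $\rho_2\mapsto\rho_{k+1}$ simultaneously (the paper uses $w=w_0^{k-1}$ for the $n$-cycle $w_0$), pass to $\widehat{w}^{-1}\phi\,\widehat{g}$, invoke Lemma \ref{working-lemma-5}, and conjugate back via Lemma \ref{relations-between-Xk}. Your explicit remark that the proof of Lemma \ref{working-lemma-5} actually produces factors in $K_n[X_1\cap X]$ and $K_n[X_2\cap X]$ (stronger than its stated conclusion) is a point the paper leaves implicit but which is genuinely needed to land in $K_n[X_k\cap X]$ and $K_n[X_{k+1}\cap X]$.
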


\begin{proof}
The case $k=1$ is considered in Lemma \ref{working-lemma-5}. So, we assume that $k \ge 2$. Choose an element $w\in \langle \rho_1, \ldots, \rho_{n-1}\rangle$ such that $w^{-1} \rho_k w = \rho_1$ and $w^{-1} \rho_{k+1} w = \rho_2$. It is not difficult to see that such $w$ exists. In fact, if $w_0$ is the cycle $(1,2, \dots, n)$, then taking $w=w_0^{k-1}$, we see that $w^{-1} \rho_k w=\rho_{1}$ and $w^{-1} \rho_{k+1} w=\rho_2$. Set $g = \theta (w)$. It follows that $g^{-1} \tau_k g = \tau_1$, and $g^{-1} \tau_{k+1} g = \tau_2$.
\par
Now set $\overline{\phi} = \widehat{w}^{-1} \phi \widehat{g}$, where $\widehat{w}$ and $\widehat{g}$ are inner automorphisms induced by $w$ and $g$ in $VT_n$ and $S_n$, respectively. Notice that 
\begin{align*}
\theta \widehat{w}^{-1} \phi \widehat{g} (\tau) & = \theta \widehat{w}^{-1} \phi  (g \tau g^{-1}) & \\
& = \theta \widehat{w}^{-1} (\phi (g) \phi (\tau) \phi (g^{-1}))&\\
& = \theta (w^{-1} \phi (g) \phi (\tau) \phi (g^{-1})w) &\\
& = \theta (w^{-1}) \theta( \phi (g)) \theta(\phi (\tau)) \theta(\phi (g^{-1})) \theta(w) &\\
&= g^{-1} g \tau g^{-1} g \\
&= \tau&
\end{align*}
for all $\tau \in S_n$, and hence $\theta \widehat{w}^{-1} \phi \widehat{g} (\tau)$ is identity on $S_n$. Also, note that 
\begin{align*}
\widehat{w}^{-1} \phi \widehat{g} (\tau_1) &= \widehat{w}^{-1} \phi  (g \tau_1 g^{-1}) = \widehat{w}^{-1} \phi  (\tau_k) = \widehat{w}^{-1}  (\rho_k) = w^{-1}  \rho_k w = \rho_1 \quad  \text{and} &\\
\widehat{w} \phi \widehat{g}^{-1} (\tau_2) &= \widehat{w} \phi  (g \tau_2 g^{-1}) = \widehat{w} \phi  (\tau_{k+1}) = \widehat{w}  (\alpha \rho_{k+1} \alpha^{-1}) = (w^{-1} \alpha w) \rho_2 (w^{-1}  \alpha^{-1} w).&
\end{align*}
The result now follows from Lemma \ref{working-lemma-5} and Lemma \ref{relations-between-Xk}.
\end{proof}
\medskip

\subsection{Main results} We now prove the main results of this section. For each $1\leq k\leq n$, we set $$Y_k = \{\alpha_{i, j} \in \mathcal{S} \mid k \leq i \neq j \leq n\}.$$
We note that $$\emptyset=Y_n \subseteq Y_{n-1}\subseteq \cdots \subseteq  Y_k \subseteq Y_{k-1} \subseteq \cdots \subseteq  Y_1=\mathcal{S}.$$

\medskip

\begin{proposition}\label{phi-conjugate-to-iota}
Let $n\geq 3$ and $\phi: S_n \rightarrow VT_n$ a homomorphism such that $\theta \phi$ is identity on $S_n$. Then $\phi$ is conjugate to $\lambda$.
\end{proposition}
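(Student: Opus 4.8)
The plan is to proceed by induction, conjugating $\phi$ so that it agrees with $\lambda$ on larger and larger initial segments of the standard generators $\tau_1, \dots, \tau_{n-1}$. Concretely, for $1 \le k \le n-1$ I would prove by descending induction on the "defect" the statement: there exists $g \in VT_n$ such that the conjugate $\widehat{g}\,\phi$ satisfies $\widehat{g}\,\phi(\tau_i) = \rho_i$ for $1 \le i \le k$, while still $\theta\,(\widehat{g}\,\phi)$ is the identity on $S_n$ (the latter holds automatically whenever $g \in K_n$, since $\theta$ kills $K_n$; and in fact we can always arrange $g \in K_n$ because $\phi$ and $\lambda$ already agree modulo $K_n$). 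The base case $k=1$ is handled by Corollary~\ref{working-corollary-2}: it gives $x_1 \in K_n$ with $\phi(\tau_1) = x_1 \rho_1 x_1^{-1}$, so replacing $\phi$ by $\widehat{x_1}^{-1}\phi$ we may assume $\phi(\tau_1) = \rho_1$.

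For the inductive step, suppose $\phi(\tau_i) = \rho_i$ for $1 \le i \le k$ and we wish to also straighten $\tau_{k+1}$. By Corollary~\ref{working-corollary-2} again, $\phi(\tau_{k+1}) = \alpha \rho_{k+1} \alpha^{-1}$ for some $\alpha \in K_n$. The relations $\tau_i \tau_{k+1} = \tau_{k+1}\tau_i$ for $1 \le i \le k-1$ force $\alpha \rho_{k+1}\alpha^{-1}$ to commute with $\rho_i$, and the braid relation with $\tau_k$ is exactly the hypothesis of Lemma~\ref{working-lemma-6}; applying that lemma (with $X = \mathcal{S}$, whose relevant intersections $X_k \cap \mathcal{S} = X_k$ etc.\ are invariant under $\rho_k, \rho_{k+1}$) yields a factorization $\alpha = \alpha_1 \alpha_2$ with $\alpha_1 \in K_n[X_k]$ and $\alpha_2 \in K_n[X_{k+1}]$. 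Since $\alpha_1 \in K_n[X_k] = K_n^{\widehat{\rho_k}}$ commutes with $\rho_k = \phi(\tau_k)$, and likewise $\alpha_1$ commutes with $\rho_i = \phi(\tau_i)$ for the other relevant $i$ as well (one checks $\alpha_1 \in K_n[X_k]$ is also centralized by $\rho_1, \dots, \rho_{k-1}$, or iterates the argument), conjugating $\phi$ by $\widehat{\alpha_1}^{-1}$ leaves $\phi(\tau_i) = \rho_i$ unchanged for $i \le k$ while replacing $\phi(\tau_{k+1})$ by $\alpha_2 \rho_{k+1}\alpha_2^{-1}$. We may thus assume $\alpha = \alpha_2 \in K_n[X_{k+1}]$, i.e.\ $\phi(\tau_{k+1}) = \alpha_2 \rho_{k+1}\alpha_2^{-1}$ with $\alpha_2$ centralizing $\rho_k$; to finish we must kill $\alpha_2$ itself.

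The crucial finishing move is that $\alpha_2 \rho_{k+1}\alpha_2^{-1} = \phi(\tau_{k+1})$ must square to $1$ and, more importantly, must satisfy the relation making $(\tau_{k+1}\tau_k)^3 = 1$ together with $\phi(\tau_k) = \rho_k$; but once $\alpha_2 \in K_n[X_{k+1}]$ this is exactly where Corollary~\ref{no-conjugate} and Corollary~\ref{working-corollary-1} apply. Writing $\phi(\tau_{k+1}) = \alpha_2 \rho_{k+1}\alpha_2^{-1} = w^{-1}\beta\rho_{k+1}w$ — more precisely, rereading $\alpha_2 \rho_{k+1}\alpha_2^{-1}$ as a conjugate of $\beta \rho_{k+1}$ after absorbing what one can — and invoking that $\theta\,\phi$ is the identity, Corollary~\ref{working-corollary-1} (with $k$ replaced by $k+1$) forces the "$X_{k+1}$-part" to be trivial, so $\phi(\tau_{k+1}) = \rho_{k+1}$ after one more conjugation by an element commuting with $\rho_1, \dots, \rho_k$. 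Iterating through $k = 1, 2, \dots, n-1$ produces $g \in K_n$ with $\widehat{g}\,\phi(\tau_i) = \rho_i = \lambda(\tau_i)$ for all $i$, i.e.\ $\widehat{g}\,\phi = \lambda$, so $\phi$ is conjugate to $\lambda$.

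The main obstacle I anticipate is the bookkeeping in the inductive step: one must verify at each stage that the conjugating elements ($\alpha_1$, and the element produced by the corollaries) genuinely commute with \emph{all} of $\rho_1, \dots, \rho_k$ simultaneously, not just with $\rho_k$ — this amounts to showing the relevant parabolic $K_n[X_k]$, or the appropriate intersection $\bigcap_{i \le k} K_n[X_i] = K_n\big[\bigcap_{i\le k} X_i\big]$ (using Lemma~\ref{intersection of parabolic subgroups}), is the right home for the conjugator, and one should probably strengthen the inductive hypothesis to track which parabolic subgroup $\alpha$ lives in. The other delicate point is checking the hypotheses of Lemma~\ref{working-lemma-6} apply with the correct invariant subset $X$ at each stage; taking $X$ to be the full generating set $\mathcal{S}$ (always invariant) sidesteps this, at the cost of slightly weaker factorizations, which is why the separate corollaries~\ref{working-corollary-1}--\ref{no-conjugate} are needed to clean up the residual $K_n[X_{k+1}]$-factor.
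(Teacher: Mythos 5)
Your overall strategy is the same as the paper's: inductively conjugate $\phi$ by elements of $K_n$ so that it agrees with $\lambda$ on $\tau_1,\dots,\tau_k$ for successively larger $k$, using Corollary~\ref{working-corollary-2} for the base case and the factorization of Lemma~\ref{working-lemma-6} in the inductive step. However, the inductive step as you have written it contains a genuine gap, and it is exactly the one you flag at the end without closing. Applying Lemma~\ref{working-lemma-6} with $X=\mathcal{S}$ gives $\alpha=\alpha_1\alpha_2$ with $\alpha_1\in K_n[X_k]$, and your parenthetical claim that ``one checks $\alpha_1\in K_n[X_k]$ is also centralized by $\rho_1,\dots,\rho_{k-1}$'' is false: $X_k=\{\alpha_{i,j}\mid i,j\notin\{k,k+1\}\}$ contains, e.g., $\alpha_{1,2}$ once $k\geq 3$, and $\rho_1\alpha_{1,2}\rho_1=\alpha_{2,1}\neq\alpha_{1,2}$. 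So conjugating by $\widehat{\alpha_1}^{-1}$ can destroy the already-established equalities $\phi(\tau_i)=\rho_i$ for $i<k$, and the induction breaks. (A smaller confusion: once $\alpha_2\in K_n[X_{k+1}]=K_n^{\widehat{\rho_{k+1}}}$ you have $\alpha_2\rho_{k+1}\alpha_2^{-1}=\rho_{k+1}$ immediately, so your ``crucial finishing move'' via Corollaries~\ref{working-corollary-1} and~\ref{no-conjugate} is not needed at that point; those corollaries are needed earlier, to convert $y\rho_m$ into honest conjugate form.)

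The missing idea, which is how the paper closes the gap, is to exploit the commuting relations $\tau_m\tau_i=\tau_i\tau_m$ for $i\leq m-2$ \emph{before} factoring: writing $\phi_{m-1}(\tau_m)=y\rho_m$, these relations force $\rho_i y\rho_i=y$, so by Proposition~\ref{conjugation-action-of-rho1} and Lemma~\ref{intersection of parabolic subgroups} one gets $y\in K_n\bigl[\bigcap_{i=1}^{m-2}X_i\bigr]=K_n[Y_m]$. One then applies Lemma~\ref{working-lemma} and Corollary~\ref{working-corollary-1} with the invariant set $Y_m$ (not $\mathcal{S}$) to get $\phi_{m-1}(\tau_m)=x_m\rho_m x_m^{-1}$ with $x_m\in K_n[Y_m]$, and Lemma~\ref{working-lemma-6} with $X=Y_{m-1}$ to get $x_m=uv$ with $u\in K_n[Y_{m-1}\cap X_{m-1}]=K_n[Y_{m+1}]$ and $v\in K_n[Y_{m-1}\cap X_m]$. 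Since every generator in $Y_{m+1}$ has both indices $\geq m+1$, the parabolic $K_n[Y_{m+1}]$ genuinely is centralized by $\rho_1,\dots,\rho_{m-1}$, so conjugating by $\widehat{u}^{-1}$ preserves the earlier generators while straightening $\tau_m$. In short: you must strengthen the induction to keep the conjugator inside the shrinking parabolics $K_n[Y_{m+1}]$, which is precisely the bookkeeping you anticipated but did not carry out; without it the step fails.
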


\begin{proof}
We claim that for each $1\leq m \leq n-1$, there exists an inner automorphism $\widehat{w}$ of $VT_n$ induced by some $w\in K_n$ such that $\widehat{w} \phi (\tau_i) = \rho_i$ for all $1\leq i \leq m$. The case $m = n-1$ would establish the proposition.
\medskip

We prove the claim by induction on $m$. By Corollary \ref{working-corollary-2}, there exists $x_1 \in K_n$ such that $\phi (\tau_1) = x_1 \rho_1 x_1^{-1}$. This gives us $\widehat{x_1}^{-1} \phi (\tau_1) = \rho_1$. We now consider the case $m=2$. Set $\phi_1 = \widehat{x_1}^{-1} \phi$, and note that $\theta \phi_1$ is also identity on $S_n$. Again, by Corollary \ref{working-corollary-2}, there exists $x_2 \in K_n$ such that $\phi_1 (\tau_2) = x_2 \rho_2 x_2^{-1}$. We also have $\phi_1 (\tau_1) = \rho_1$. Thus, by Lemma \ref{working-lemma-5}, there exists $\alpha_1 \in K_n[X_1]$, $\alpha_2\in K_n[X_2]$ such that $x_2 = \alpha_1 \alpha_2$. This gives $\phi_1 (\tau_2) = x_2 \rho_2 x_2^{-1} = \alpha_1 \alpha_2 \rho_2 \alpha_2^{-1}\alpha_1^{-1} = \alpha_1 \rho_2 \alpha_1^{-1}$. We can check that  $\widehat{x_1 \alpha_1}^{-1} \phi(\tau_i)=\rho_i$ for $i=1, 2$, and the claim holds for $m=2$.
\medskip

Now, suppose that $3\leq m \leq n-1$ and that the claim holds for $m-1$, i.e., there exists some $w\in K_n$ such that $\widehat{w} \phi (\tau_i) = \rho_i$ for all $1\leq i \leq m-1$.  Set $\phi_{m-1} = \widehat{w} \phi$.  Since $\theta \phi_{m-1}$ is identity on $S_n$, there exists $y\in K_n$ such that $\phi_{m-1} (\tau_m) = y \rho_m$. Since $\tau_m \tau_i = \tau_i \tau_m$, we have $y \rho_m \rho_i = \rho_i y \rho_m$ for all $1\leq i \leq m-2$. Thus, $\rho_i y \rho_i =  y$, i.e., $y$ is a fixed-point under the conjugation action of  $\rho_i$ for each $1\leq i \leq m-2$. By Corollary \ref{conjugation-action-of-rho1}, we have $y\in K_n^{\widehat{\rho_i}} = K_n[X_i]$ for each $1\leq i \leq m-2$. Since 
$$\bigcap_{i=1}^{m-2} X_i = Y_m,$$
it follows that $y \in K_n[Y_m]$. Since $\tau_m^2 = 1$, we have $1 = \phi_{m-1} (\tau_m)^2 = y \rho_m y \rho_m$, i.e., $ \rho_m y \rho_m = y^{-1}$. Note that $Y_m$ is invariant under conjugation by $\rho_m$. Thus, by Lemma \ref{working-lemma}, there exist $x_m \in K_n[Y_m] \leq K_n[Y_{m-1}]$ and $\beta \in K_n[Y_m\cap X_m] \leq K_n[X_m]$ such that $y = x_m \beta \rho_m x_m^{-1} \rho_m$ and $\beta^2 =1$. This gives
$$\phi_{m-1} (\tau_m) = y \rho_m = x_m \beta \rho_m x_m^{-1}.$$
Now, due to Corollary \ref{working-corollary-1},  we have $\beta = 1$, and consequently $\phi_{m-1} (\tau_m) = x_m \rho_m x_m^{-1}$.
\medskip

Note that $Y_{m-1}$ is invariant under the conjugation action of both $\rho_{m-1}$ and $\rho_m$. Due to Lemma \ref{working-lemma-6}, there exists $u\in K_n[Y_{m-1}\cap X_{m-1}] = K_n[Y_{m+1}]$ and $v\in K_n[Y_{m-1}\cap X_m]$ such that $x_m = uv$. Thus, we have $\phi_{m-1} (\tau_m) = x_m \rho_m x_m^{-1} = u \rho_m u^{-1}$. It follows from the choice of $u$ that  $\widehat{u}^{-1} \phi_{m-1}(\tau_i)=\rho_i$ for all $1\leq i \leq m$. This is equivalent to $\widehat{uw}^{-1} \phi(\tau_i)=\rho_i$ for all $1\leq i \leq m$, and the proof is complete.
\end{proof}

Finally, we present the main result of this section. 

\begin{theorem}\label{Homomorphisms-Sn-VTn}
Let $n, m$ be integers such that $n \geq m$, $n \geq 5$ and $m \geq 2$. Let $\phi : S_n \to VT_m$ be a homomorphism. Then, upto conjugation of homomorphisms, one of the following  assertions holds:
\begin{enumerate}
\item $\phi$ is abelian,
\item $n=m$ and $\phi = \lambda$, 
\item $n=m=6$ and $\phi = \lambda~\nu$.
\end{enumerate}
\end{theorem}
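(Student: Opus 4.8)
The plan is to compose $\phi$ with the canonical retraction $\theta\colon VT_m\to S_m$ and reduce the whole classification to the already-established Proposition~\ref{phi-conjugate-to-iota}. First I would look at the homomorphism $\theta\phi\colon S_n\to S_m$. Since $n\ge m$, $n\ge 5$ and $m\ge 2$, Proposition~\ref{Homorphisms-Sn-Sm} says that, up to conjugation, $\theta\phi$ is abelian, or $n=m$ and $\theta\phi=\id$, or $n=m=6$ and $\theta\phi=\nu$. Using $\theta\lambda=\id_{S_m}$, replacing $\phi$ by $\widehat{\lambda(\sigma)}\,\phi$ replaces $\theta\phi$ by $\widehat{\sigma}\,\theta\phi$; hence, after conjugating $\phi$ (which is harmless, as the conclusion is also stated up to conjugacy), I may assume that $\theta\phi$ equals $\id$ (resp.\ $\nu$) on the nose in the last two cases. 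It then remains to treat the three cases.

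In the abelian case I claim $\phi$ itself is abelian. Indeed $\theta\phi(S_n)$ is an abelian quotient of $S_n$, and since $n\ge 5$ the only normal subgroups of $S_n$ are $1$, $A_n$, $S_n$; therefore $A_n\le\Ker(\theta\phi)$, i.e.\ $\phi(A_n)\le\Ker\theta=K_m$. Now $\phi(A_n)$ is a \emph{finite} subgroup of the right-angled Coxeter group $K_m$ (Corollary~\ref{irred-raag}); by the classical fact that a finite subgroup of a Coxeter group is conjugate into a finite standard parabolic subgroup (Bourbaki~\cite{Bourbaki}), and that a finite standard parabolic subgroup of a right-angled Coxeter group is an elementary abelian $2$-group (otherwise it contains an infinite dihedral subgroup), $\phi(A_n)$ is an elementary abelian $2$-group. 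But $\phi(A_n)$ is a quotient of the nonabelian simple group $A_n$, so $\phi(A_n)=1$. Hence $\phi$ factors through $S_n/A_n\cong\mathbb{Z}_2$ and $\phi(S_n)$ has order at most two, in particular abelian. In the case $n=m$, $\theta\phi=\id$, Proposition~\ref{phi-conjugate-to-iota} (whose hypothesis $n\ge 3$ is satisfied) applies verbatim and gives that $\phi$ is conjugate to $\lambda$. Finally, in the case $n=m=6$, $\theta\phi=\nu$, I would set $\phi'=\phi\circ\nu^{-1}$; then $\theta\phi'=\nu\circ\nu^{-1}=\id$, so by the previous case $\phi'$ is conjugate to $\lambda$, say $\widehat{w}\,\phi'=\lambda$ for some $w\in VT_6$, whence $\widehat{w}\,\phi=\lambda\nu$ and $\phi$ is conjugate to $\lambda\nu$.

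Given Proposition~\ref{phi-conjugate-to-iota} and the technical lemmas preceding it, the only step requiring a genuine argument is ruling out a faithful copy of $A_n$ (equivalently, of $S_n$) inside $K_m$ in the abelian case; everything else is formal manipulation with the split exact sequence $1\to K_m\to VT_m\to S_m\to 1$ and the order-two outer automorphism $\nu$ of $S_6$. I expect no real obstacle, since the rigidity content of this section has already been absorbed into Proposition~\ref{phi-conjugate-to-iota}.
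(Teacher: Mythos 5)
Your proof is correct, and its overall skeleton (compose with $\theta$, invoke Proposition \ref{Homorphisms-Sn-Sm}, then hand the cases $\theta\phi=\id$ and $\theta\phi=\nu$ to Proposition \ref{phi-conjugate-to-iota}) is exactly the paper's. Cases (2) and (3) coincide with the paper's treatment verbatim, and your remark that conjugating $\phi$ by $\lambda(\sigma)$ conjugates $\theta\phi$ by $\sigma$ correctly justifies the normalisation that the paper leaves implicit. The one place where you genuinely diverge is the abelian case. The paper argues generator by generator: it first shows $\theta\phi(\tau_i)=w$ for all $i$ via the braid relation, writes $\phi(\tau_i)=\alpha_i\lambda(w)$ with $\alpha_i\in K_m$, and then uses the relations $\tau_i^2=1$ and $(\tau_i\tau_{i+1})^3=1$ to deduce $(\alpha_i\alpha_{i+1}^{-1})^3=1$, whence $\alpha_i=\alpha_{i+1}$ because nontrivial torsion elements of a right-angled Coxeter group have order two. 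You instead observe that $\theta\phi$ abelian forces $\phi(A_n)\le K_m$, and kill $\phi(A_n)$ by combining the conjugacy of finite subgroups of a Coxeter group into finite standard parabolics (which in the right-angled case are elementary abelian $2$-groups) with the perfectness of $A_n$ for $n\ge 5$. Both arguments are sound and ultimately rest on the same torsion constraint in $K_m$; yours imports a slightly heavier classical fact but yields the cleaner structural conclusion $\phi(A_n)=1$ (so the image is cyclic of order at most two) without any computation with the Coxeter relations, while the paper's is more elementary and self-contained, needing only the order-two statement it already cites. No gap either way.
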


\begin{proof}
Consider the composition $S_n \stackrel{\phi}{\longrightarrow}  VT_m  \stackrel{\theta}{\longrightarrow}  S_m.$ By Proposition \ref{Homorphisms-Sn-Sm}, one of the following  assertions holds for $\theta \phi$: 
\begin{enumerate}
\item $\theta \phi$ is abelian,
\item $n=m$ and $\theta \phi = \id$,
\item $n=m=6$ and $\theta \phi = \nu$.
\end{enumerate}
\medskip
Case (1): Let $\theta \phi$ be abelian. We claim that there exist $w \in S_m$ such that $\theta \phi(\tau_i)=w$ for all $i$. Suppose on the contrary that there exist $i$ and elements $w_1 \neq w_2$  in $S_m$ such that $\theta \phi(\tau_i)=w_1$ and $\theta \phi(\tau_{i+1})=w_2.$ The braid relation $\tau_i\tau_{i+1}\tau_i = \tau_{i+1}\tau_i\tau_{i+1}$ gives $w_1w_2w_1= w_2w_1w_2$. Now, $\theta \phi$ being abelian implies that $w_1 = w_2$, and the claim holds.
\par
Let us set $\lambda(w)=g$. It follows that, for each $1 \le i\le n-1$, there exist $\alpha_i \in K_n$ such that $\phi(\tau_i)=\alpha_i g$. 
We claim that $\alpha_i=\alpha_{i+1}$ for all $i$. Since $\tau_i^2=1$, it follows that $g^2=1$ and $\alpha_i g = g \alpha_i^{-1}$ for all $i$. Since $(\tau_i\tau_{i+1})^3=1$, it follows that $1=(\alpha_i g \alpha_{i+1} g)^3=(\alpha_i g g \alpha_{i+1}^{-1})^3=(\alpha_i \alpha_{i+1}^{-1})^3$. Thus, the element $\alpha_i \alpha_{i+1}^{-1}\in K_n$ has order dividing three. Since $K_n$ is a right-angled Coxeter group, a non-trivial finite order element must have order two \cite[Proposition 1.2]{Ruiz-Kazachkov-Remeslennikov}. This implies that $\alpha_i=\alpha_{i+1}$. Hence, the homomorphism $\phi$ is abelian.
\medskip

Case(2): Let $n=m$ and $\theta \phi = \id$. Then, by Proposition \ref{phi-conjugate-to-iota}, the homomorphism $\phi$ is conjugate to $\lambda$.
\medskip

Case(3): Lastly, suppose that $n=m=6$ and $\theta \phi = \nu$. Then $\theta \phi \nu^{-1}$ is identity on $S_n$. By Proposition \ref{phi-conjugate-to-iota}, the homomorphism $ \phi \nu^{-1}$ is conjugate to $\lambda$, and equivalently, $\phi$ is conjugate to $ \lambda \nu$.
\end{proof}
\medskip

\section{Homomorphisms from $VT_n$ to $VT_m$}\label{Homomorphisms-VTn-VTm}
Recall that the non-inner automorphism $\nu : S_6 \to S_6$ is defined on generators as
$$ \nu(\tau_1) = (1,2)(3,4)(5,6), \quad \nu(\tau_2) = (2,3)(1,5)(4,6), \quad \nu(\tau_3) = (1,3)(2,4)(5,6),$$
$$ \nu(\tau_4) = (1,2)(3,5)(4,6), \quad \nu(\tau_5) = (2,3)(1,4)(5,6).$$
We set $v_i= \lambda\nu(\tau_i)$ for each $1 \leq i \leq 5$.

\begin{lemma}\label{For-Proof-of-Case-n=m=6}
If $H= \langle v_3, v_4, v_5 \rangle $, then $KT_6^H := \{ x \in KT_6 ~|~ w x w^{-1} =x \text{ for all } w \in H \} = \{1\}$. 
\end{lemma}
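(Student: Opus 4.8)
The plan is to compute the intersection of the three centraliser subgroups $\C_{KT_6}(v_3)$, $\C_{KT_6}(v_4)$, $\C_{KT_6}(v_5)$ inside $KT_6$ and show that it reduces to the trivial group. Since each $v_i = \lambda\nu(\tau_i) = \rho_{\sigma_i}$ where $\sigma_i = \nu(\tau_i) \in S_6$ is a fixed-point-free involution, the conjugation action of $v_i$ on $KT_6$ is the order-two automorphism $\widehat{v_i}$ given on generators by $\alpha_{a,b} \mapsto \alpha_{\sigma_i(a),\sigma_i(b)}$ (this is the content of Remark \ref{rephrasing-action}, extended to arbitrary permutations). The key structural input is Proposition \ref{conjugation-action-of-rho1} together with the displayed consequence $K_n^{\widehat{\rho_k}} = K_n[X_k]$: more generally, for any involution $\rho \in \langle\rho_1,\dots,\rho_{n-1}\rangle$ of the form $w\rho_k w^{-1}$, Lemma \ref{relations-between-Xk} gives $K_n^{\widehat{\rho}} = K_n[wX_kw^{-1}]$, i.e. the fixed subgroup of $\widehat{v_i}$ is the standard parabolic subgroup on those $\alpha_{a,b}$ with $\{a,b\}$ disjoint from some transposition — but since $\sigma_i$ is a product of three disjoint transpositions, one must iterate the amalgamation argument of Proposition \ref{conjugation-action-of-rho1} once for each transposition.

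First I would record precisely, for each $i \in \{3,4,5\}$, the permutation $\sigma_i$ and hence the set $X^{(i)} := \{\alpha_{a,b} \in \mathcal{S} \mid \{a,b\}$ contains no $\sigma_i$-orbit$\}$; by the argument generalising Proposition \ref{conjugation-action-of-rho1} (apply Lemma \ref{amalgamated product of subgroups} and Lemma \ref{Fixed-Point-Lemma} successively along the three $2$-element orbits of $\sigma_i$, exactly as in the proof of that proposition which handled a single orbit $\{k,k+1\}$), one gets $KT_6^{\widehat{v_i}} = KT_6[X^{(i)}]$. Then by Lemma \ref{intersection of parabolic subgroups},
$$KT_6^H = KT_6^{\widehat{v_3}} \cap KT_6^{\widehat{v_4}} \cap KT_6^{\widehat{v_5}} = KT_6\!\left[X^{(3)} \cap X^{(4)} \cap X^{(5)}\right],$$
so it suffices to check that no generator $\alpha_{a,b}$ survives all three conditions, i.e. that for every ordered pair $(a,b)$ with $1 \le a \ne b \le 6$ at least one of $\sigma_3,\sigma_4,\sigma_5$ has a transposition contained in $\{a,b\}$. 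This is a finite combinatorial check over the $30$ unordered pairs: each $\sigma_i$ is a perfect matching on $\{1,\dots,6\}$ (three disjoint edges), so $\alpha_{a,b}$ fails the $i$-th condition precisely when $\{a,b\}$ is an edge of the matching $\sigma_i$; one must verify that the three matchings $\nu(\tau_3) = \{13,24,56\}$, $\nu(\tau_4) = \{12,35,46\}$, $\nu(\tau_5) = \{14,23,56\}$ together cover all $\binom{6}{2}=15$ pairs.

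Here the main obstacle — and in fact the point where the lemma could fail if one were careless about which $v_i$ to pick — is that the three matchings from $\nu(\tau_3),\nu(\tau_4),\nu(\tau_5)$ do \emph{not} cover all $15$ pairs; for instance the edge $56$ appears in both $\nu(\tau_3)$ and $\nu(\tau_5)$, wasting coverage. So I expect the honest argument needs more than just these three involutions acting by conjugation: one should also exploit that $H = \langle v_3,v_4,v_5\rangle$ contains further elements (products such as $v_3v_4$, $v_3v_5$, $v_4v_5$, and longer words) whose images under $\lambda\nu$ are other fixed-point-free involutions or permutations, and take $KT_6^H = \bigcap_{w \in H} KT_6^{\widehat{\theta(w)}}$ over a generating-enough family of such $w$. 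Concretely I would enumerate enough elements $w \in H$ so that the union of the edge-sets of the corresponding perfect matchings (or more generally the union of the "forbidden pair" sets coming from $2$-cycles appearing in $\theta(w)$) exhausts all $\binom{6}{2}$ pairs; since $\nu$ is an automorphism of $S_6$ and $\langle\tau_3,\tau_4,\tau_5\rangle$ is the full symmetric group on $\{3,4,5,6\}$-adjacent generators generating a copy of $S_4$, its image $\langle\sigma_3,\sigma_4,\sigma_5\rangle$ is a transitive subgroup of $S_6$ of order $24$, which is more than enough to force the intersection of parabolics to be trivial. The write-up would therefore be: (i) reduce $KT_6^H$ to an intersection of parabolic subgroups $KT_6[X^{(w)}]$ over $w$ in a finite list of elements of $H$ using the generalisation of Proposition \ref{conjugation-action-of-rho1} and Lemma \ref{intersection of parabolic subgroups}; (ii) exhibit enough such $w$ (equivalently, enough matchings/cycle-sets) to cover all pairs $\{a,b\}$; (iii) conclude $KT_6^H = KT_6[\emptyset] = \{1\}$.
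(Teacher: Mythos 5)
Your overall strategy is the same as the paper's: reduce $KT_6^H$ to an intersection of standard parabolic subgroups via amalgamated product decompositions of $KT_6$ along pairs $\{\alpha_{i,j},\alpha_{j,i}\}$ together with Lemma \ref{Fixed-Point-Lemma}, and then check combinatorially that the $2$-cycles occurring in suitable involutions of $H$ cover all $\binom{6}{2}=15$ pairs. You also correctly spot the one genuine subtlety, namely that the three matchings coming from $v_3,v_4,v_5$ alone cover only $8$ of the $15$ pairs, so additional elements of $H$ are indispensable. Two points, however, need repair. First, your claimed \emph{equality} $KT_6^{\widehat{v_i}} = KT_6[X^{(i)}]$ is false: only the inclusion $KT_6^{\widehat{v_i}} \subseteq KT_6[X^{(i)}]$ holds. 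Indeed $\alpha_{1,2}\in X^{(3)}$ but $\widehat{v_3}(\alpha_{1,2})=\alpha_{3,4}\neq\alpha_{1,2}$, so $KT_6[X^{(3)}]\not\subseteq KT_6^{\widehat{v_3}}$; conversely the fixed subgroup contains non-parabolic elements such as $\alpha_{1,2}\alpha_{3,4}$. Fortunately only the inclusion is used downstream, so this is a misstatement rather than a fatal error, but the chain of equalities in your display should be a chain of inclusions.

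Second, and more substantively, your step (ii) is the actual content of the lemma and is left unproved. Transitivity of $\langle\sigma_3,\sigma_4,\sigma_5\rangle\cong S_4$ on $\{1,\dots,6\}$ and the fact that it has order $24$ do not by themselves imply that the $2$-cycles of its involutions cover all $15$ pairs; a priori the edge-sets could miss an orbit of pairs. The paper closes this by an explicit choice: it takes the six elements $v_3,\; v_4,\; v_5,\; v_3v_4v_3,\; v_4v_5v_4,\; v_3v_4v_5v_4v_3$, i.e.\ the images under $\lambda\nu$ of the six transpositions $(3,4),(4,5),(5,6),(3,5),(4,6),(3,6)$ of $\langle\tau_3,\tau_4,\tau_5\rangle$. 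As permutations these are
$(1,3)(2,4)(5,6)$, $(1,2)(3,5)(4,6)$, $(2,3)(1,4)(5,6)$, $(1,6)(2,5)(3,4)$, $(1,5)(2,6)(3,4)$, $(1,2)(3,6)(4,5)$,
and their $18$ edges do cover all $15$ pairs (the pairs $\{1,2\},\{3,4\},\{5,6\}$ each occur twice). With that list in hand, for each pair $\{i,j\}$ one picks a $w$ in this list containing $(i,j)$, writes $KT_6 = KT_6[U_{ij}']\ast_{KT_6[U_{ij}]}KT_6[U_{ij}'']$, applies Lemma \ref{Fixed-Point-Lemma} to get $KT_6^H\subseteq KT_6[U_{ij}]$, and intersects over all pairs using Lemma \ref{intersection of parabolic subgroups}. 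So your outline is the right one, but the finite verification you defer is precisely where the proof lives, and the heuristic you offer in its place does not substitute for it.
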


\begin{proof}
Consider the subset $H' = \{ v_3, v_4, v_5, v_3 v_4 v_3, v_4 v_5 v_4, v_3 v_4 v_5 v_4 v_3 \} $ of $H$. Since $H$ is a subgroup of $\langle \rho_1, \ldots, \rho_5 \rangle \cong S_6$, we  
can view elements of $H'$ in terms of permutations as
$$ v_3 = (1,3)(2,4)(5,6), \quad v_4 = (1,2)(3,5)(4,6 ),\quad u_5 = (2,3)(1,4)(5,6),$$
$$ v_3 v_4 v_3 = (1,6)(2,5)(3,4), \quad  v_4 v_5 v_4 = (1,5)(2,6)(3,4), \quad v_3 v_4 v_5 v_4 v_3 = (1,2)(3,6)(4,5) .$$
\medskip

For fixed $1 \leq i < j \leq 6$, set $U_{ij}= \mathcal{S} \setminus \{\alpha_{i,j}, \alpha_{j,i}\}$, $U'_{ij} = U_{ij} \cup \{ \alpha_{i,j} \}$ and $U''_{ij} = U_{ij} \cup \{ \alpha_{j,i} \}$. Then, by Lemma \ref{amalgamated product of subgroups}, we have 
$$KT_6 = KT_6[U'_{ij}] { \ast_{KT_6[ U_{ij}]}} KT_6[U''_{ij}].$$
Note that the set $H'$ is taken in a way that each transposition $(i,j)$ appears in the decomposition of some element of $H'$. Let us choose an element $w \in H'$ containing the transposition $(i, j)$ in its decomposition. We notice that $w(KT_6[U'_{ij}])w= KT_6[U''_{ij}]$ and $w(K_6[U''_{ij}])w= K_6[U'_{ij}]$. Thus, by Lemma \ref{Fixed-Point-Lemma}, we have $KT_6^H \subseteq {KT_6}^{\widehat{w}} \subseteq KT_6[U_{ij}]$ for all $1 \leq  i < j \leq 6$. Since $\bigcap_{1 \leq  i < j \leq 6} U_{ij} = \emptyset$, by Lemma \ref{intersection of parabolic subgroups}, we have $KT_6^H= \{1\}.$
\end{proof}

For each $m \in \mathbb{Z}$, consider the homomorphism $\phi_m : VT_n \to VT_n$  given on generators by 
$$\phi_m (s_i) = (s_i \rho_i)^{m} \rho_i \quad \textrm{and} \quad \phi_m(\rho_i) = \rho_i.$$

Setting $\phi_{-1}=\zeta$, the main result of this section is as follows.
 
\begin{theorem}\label{homomorphisms-VTn-VTm}
Let $n, m$ be integers such that $n \geq m$, $n \geq 5$ and $m \geq 2$. Let $\phi : VT_n \to VT_m$ be a homomorphism. Then, upto conjugation of homomorphisms, one of the following  assertions holds:
\begin{enumerate}
\item $\phi$ is abelian,
\item $n=m$ and $\phi \in \{\lambda  \pi,~ \lambda  \theta, ~\phi_m, ~ \zeta  \phi_m, ~\text{where}~ m \in \mathbb{Z} \}$,
\item $n=m=6$ and $\phi \in \{ \lambda  \nu  \theta, \lambda  \nu  \pi \}$.
\end{enumerate}
\end{theorem}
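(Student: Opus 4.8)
The plan is to factor the homomorphism $\phi : VT_n \to VT_m$ through the two natural quotients and use the results already established. Recall $VT_n = K_n \rtimes S_n$ via the splitting $\theta : VT_n \to S_n$, $\lambda : S_n \to VT_n$. First I would restrict $\phi$ along $\lambda$ to get a homomorphism $\phi\lambda : S_n \to VT_m$, to which Theorem \ref{Homomorphisms-Sn-VTn} applies: up to conjugating $\phi$ by an element of $VT_m$, either $\phi\lambda$ is abelian, or $n=m$ and $\phi\lambda = \lambda$, or $n=m=6$ and $\phi\lambda = \lambda\nu$. These three cases organise the proof, exactly as in the proofs of Theorems \ref{Homomorphisms-VTn-Sm} and \ref{Homomorphisms-Sn-VTn}.

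\textbf{Case 1: $\phi\lambda$ abelian.} As in Case (1) of Theorem \ref{Homomorphisms-Sn-VTn}, the braid relation on the $\tau_i$ forces $\phi\lambda(\tau_i) = g$ independent of $i$, with $g^2 = 1$. Then I would write $\phi(s_i) = c_i$ and use the relations \eqref{7}, namely $\rho_i s_{i+1}\rho_i = \rho_{i+1} s_i \rho_{i+1}$, which become $g c_{i+1} g = g c_i g$, forcing $\phi(s_i) = c$ independent of $i$; then $s_1\rho_3 = \rho_3 s_1$ gives $cg = gc$, so $\phi(VT_n) = \langle c, g\rangle$ is abelian.

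\textbf{Case 2: $n=m$ and $\phi\lambda = \lambda$.} This is the heart of the theorem. Here $\phi(\rho_i) = \rho_i$ for all $i$, so $\phi$ restricts to a map on the $S_n$-part and we only need to determine $\phi(s_i) \in VT_n$. Writing $\phi(s_i) = \beta_i$, the relation $s_i^2 = 1$ gives $\beta_i^2 = 1$; relations \eqref{6} ($s_i\rho_j = \rho_j s_i$ for $|i-j|\ge 2$) show $\beta_1$ centralises $\langle \rho_3,\dots,\rho_{n-1}\rangle$, and relations \eqref{7} propagate the value of $\beta_1$ to all $\beta_i$ via the $S_n$-action, as in Case (2) of Theorem \ref{Homomorphisms-VTn-Sm}. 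Applying $\theta$ to $\beta_1 = \phi(s_1)$ and using Theorem \ref{Homomorphisms-VTn-Sm} (or a direct argument) pins $\theta(\beta_1)$ down to $1$ or $\tau_1$. The subcase $\theta(\beta_1) = \tau_1$, combined with $\beta_1^2=1$ and $\beta_1$ commuting with the appropriate $\rho_j$, should force $\beta_1 \in \{s_1\} \cup \{(s_1\rho_1)^m\rho_1 : m\in\mathbb{Z}\}$ — this is where I expect the main obstacle: one must analyse involutions $\beta_1$ in $VT_n$ projecting to $\tau_1$ under $\theta$ and centralising $\rho_3,\dots,\rho_{n-1}$, i.e. lying in (a coset related to) the subgroup generated by $s_1,\rho_1,\rho_2$ together with elements of $K_n$ fixed by the relevant conjugations; the centraliser computations of Section \ref{homo from sn to vtn} (Proposition \ref{conjugation-action-of-rho1}, Corollary \ref{centraliser sn vtn kn same}, Lemma \ref{working-lemma}) are the tools to constrain $\beta_1$ to $K_2\langle\rho_1\rangle$-type elements and identify them with the $\phi_m$ family. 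The subcase $\theta(\beta_1)=1$ means $\beta_i \in K_n$ for all $i$; here one checks compatibility with $VT_n = K_n\rtimes S_n$ and deduces $\phi$ is $\lambda\theta$ up to the sign/parametrisation giving $\zeta\phi_m$. One then verifies directly that each $\phi_m$ and $\zeta\phi_m$ is a genuine endomorphism by checking it respects relations \eqref{1}--\eqref{7} (a routine but necessary computation, since $(s_i\rho_i)$ is an element one must track carefully through \eqref{7}).

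\textbf{Case 3: $n=m=6$ and $\phi\lambda = \lambda\nu$.} Precompose with $\nu^{-1}$: the map $\phi\lambda\nu^{-1} = \lambda$ puts us back in a situation where we want to run the Case 2 analysis, but now the generators $\rho_i$ are sent to $v_i = \lambda\nu(\tau_i)$ rather than $\rho_i$. The obstruction to directly reusing Case 2 is that $\phi(\rho_i)$ is no longer $\rho_i$; this is precisely why Lemma \ref{For-Proof-of-Case-n=m=6} is stated — it gives $KT_6^H = \{1\}$ for $H = \langle v_3,v_4,v_5\rangle$, playing the role that $K_n[X_i]$-intersection played in Case 2, and letting one conclude $\phi(s_i)$ centralises enough of the image to be determined, yielding $\phi = \lambda\nu\theta$ or $\lambda\nu\pi$ (note the $\phi_m$ with $m\ne 0,\pm1$ do not survive here because the $\nu$-twisted relations are more rigid — one should check that the only involutions compatible with the $v_i$ are the two listed). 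I would conclude by assembling the three cases and, separately, confirming that all the listed homomorphisms are pairwise non-conjugate and well-defined, completing the classification.
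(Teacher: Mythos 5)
Your proposal follows essentially the same route as the paper's proof: reduce via $\phi\lambda$ and Theorem \ref{Homomorphisms-Sn-VTn} to three cases; in the case $\phi\lambda=\lambda$ use the relations $s_1\rho_k=\rho_k s_1$ for $k\geq 3$ together with Proposition \ref{conjugation-action-of-rho1} to confine the $K_n$-part of $\phi(s_1)$ to $K_n[\{\alpha_{1,2},\alpha_{2,1}\}]\cong \mathbb{Z}_2 \ast \mathbb{Z}_2$, where the involutions (respectively the elements inverted by $\widehat{\rho_1}$) yield exactly the list $\lambda\theta$, $\lambda\pi$, $\phi_m$, $\zeta\phi_m$; and invoke Lemma \ref{For-Proof-of-Case-n=m=6} in the $n=m=6$ case. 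The only quibble is a bookkeeping slip: since $\theta\bigl((s_1\rho_1)^{m}\rho_1\bigr)=\tau_1^{m+1}$, the elements $s_1$ and $(s_1\rho_1)^{2m+1}\rho_1$ have trivial $\theta$-image and so arise in the subcase $\theta(\phi(s_1))=1$ rather than $\theta(\phi(s_1))=\tau_1$ as you state, but this does not affect the substance of the argument.
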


\begin{proof}
Consider the composition $S_n \stackrel{\lambda}{\longrightarrow}  VT_n \stackrel{\phi}{\longrightarrow}  VT_m.$ By Proposition \ref{Homomorphisms-Sn-VTn}, one of the following  assertions holds for $\phi  \lambda$: 
\begin{enumerate}
\item $\phi  \lambda$ is abelian,
\item $n=m$ and $\phi  \lambda = \lambda$,
\item $n=m=6$ and $\phi  \lambda = \lambda  \nu$.
\end{enumerate}
\medskip
Case(1): This case is similar to Case (1) of Theorem \ref{Homomorphisms-VTn-Sm}. If $\phi  \lambda$ is abelian, then there exists $w \in VT_m$ such that $\phi  \lambda(\tau_i) = w$ for all $i$. Equivalently, $\phi(\rho_i)=w$ for all $i$. Let $\phi(s_1)=z$. The relation $\rho_i s_{i+1} \rho_i = \rho_{i+1} s_i \rho_{i+1}$ gives $\phi(s_i) = \phi(s_{i+1})=z$ for all $i$. Finally, the relation $s_1 \rho_3 = \rho_3 s_1$ gives $zw=wz$, and hence $\phi$ is abelian. 
\medskip

Case(2): Let $m=n$ and $\phi  \lambda = \lambda$. This implies that $\phi(\rho_i) = \rho_i$ for all $i$. We now determine $\phi(s_i)$ for all $i$. Since $VT_n = K_n \rtimes S_n$, we have $\phi(s_i)=a_i \lambda(w_i)$ for some $a_i \in K_n$ and $w_i \in S_n$. For each $ 3 \leq j \leq n-1$, we have
$$\tau_j w_1 = \theta  \phi(\rho_j s_1) = \theta  \phi(s_1\rho_j)= w_1 \tau_j.$$
This implies that $w_1$ lies in centraliser of $\langle \tau_3, \tau_4, \dots, \tau_{n-1} \rangle$ in $S_n$, which is $\langle \tau_1 \rangle$. Thus, either $w_1 =1$ or $w_1= \tau_1$.
\medskip

Case (2a): Let us suppose that $w_1=1$. For each $3 \leq k \leq n-1$, we have $s_1 \rho_k = \rho_k s_1$, and hence $a_1  = \rho_k a_1\rho_k$. Thus, if we set $X_k= \{ \alpha_{i, j} \in \mathcal{S} \mid i, j \not\in \{k, k+1 \} \}$, then $a_1 \in K_n[X_k]$ for all $3 \le k \le n-1$. We have 
$$\bigcap_{3 \leq k \leq n-1} X_k =\{ \alpha_{1,2}, \alpha_{2,1} \},$$
and hence $a_1 \in K_n[\{\alpha_{1,2}, \alpha_{2,1}\}] \cong \mathbb{Z}_2 \ast \mathbb{Z}_2$. Elements in $K_n[\{\alpha_{1,2}, \alpha_{2,1}\}]$ are of the form $(\alpha_{1,2} \alpha_{2,1})^m$ or $\alpha_{1,2} (\alpha_{2,1} \alpha_{1,2})^m$ or $\alpha_{2,1} (\alpha_{1,2} \alpha_{2,1})^m$ for some integer $m$. The only order two elements are $\alpha_{1,2}$, $\alpha_{2,1}$, $\alpha_{1,2} (\alpha_{2,1} \alpha_{1,2})^m$ and $\alpha_{2,1} (\alpha_{1,2} \alpha_{2,1})^m$. Since $a_1^2=1$, it follows that  $a_1$ is either $1$ or any of the order two element mentioned beforehand. We use the relation $s_{i+1}= \rho_i \rho_{i+1} s_i \rho_{i+1} \rho_i$ to determine $a_i$ as follows.
\begin{itemize}
\item If $a_1=1$, then $\phi(s_i)=1$ for all $i$. Thus, we obtain $\phi = \lambda  \theta.$
\item If $\phi(s_1)= \alpha_{1,2}= s_1$, then $\phi(s_i) = \alpha_{i,i+1}= s_i$  for all $i$. Consequently, we have $\phi = \id$.
\item If $a_1= \alpha_{2,1} = \rho_1 s_1 \rho_1$, then $\phi(s_i)= \alpha_{i+1,i} = \rho_i s_i \rho_i$ for all $i$. Thus, we have $\phi = \zeta$.
\item Let $\phi(s_1) = a_1 = \alpha_{1,2} (\alpha_{2,1} \alpha_{1,2})^m = s_1( \rho_1 s_1 \rho_1 s_1)^m = s_1 (\rho_1 s_1)^{2m} = (s_1 \rho_1)^{2m} s_1= (s_1 \rho_1)^{2m+1} \rho_1 $.  Then we get $\phi(s_i) = (s_i \rho_i)^{2m+1} \rho_i$ for all $i$, and hence $\phi= \phi_{2m+1}$.
\item Lastly, let $\phi(s_1) = a_1 = \alpha_{2,1} (\alpha_{1,2} \alpha_{2,1})^m = (\rho_1 s_1 \rho_1) ( s_1 \rho_1 s_1 \rho_1)^m = \rho_1 (s_1 \rho_1)^{2m+1} $.  Then we have 
$\phi(s_i) = \rho_i  (s_i \rho_i)^{2m+1} $ for all $i$, and consequently $\phi= \zeta  \phi_{2m+1}$.
\end{itemize}
\par

Case (2b): Suppose that $w_1=\tau_1$. Then $\phi(s_1) = a_1 \rho_1$, and hence $\rho_1 a_1 \rho_1 = a_1^{-1}$. As in Case (2a), the commuting relation $s_1 \rho_k = \rho_k s_1$  for  $3 \leq k \leq n-1$ shows that $a_1\in K_n[\{ \alpha_{1,2}, \alpha_{2,1}\}]$. A direct check shows that an elements of $K_n[\{ \alpha_{1,2}, \alpha_{2,1}\}]$ satisfying $\rho_1 a_1 \rho_1 = a_1^{-1}$ must be of the form $(\alpha_{1,2} \alpha_{2,1})^m$, where $m \in \mathbb{Z}$. 
\begin{itemize}
\item If $a_1= 1$, then $\phi(s_i)=\rho_i$ for all $i$, and hence $\phi = \lambda  \pi$. 
\item Let $a_1=(\alpha_{1,2} \alpha_{2,1})^m$ for some non-zero integer $m$. Then $\phi(s_1)= (s_1 \rho_1 s_1 \rho_1)^m \rho_1= (s_1 \rho_1)^{2m} \rho_1$. Using the relation $s_2= \rho_1 \rho_2 s_1 \rho_2 \rho_1$ gives $$\phi(s_2)= \rho_1 \rho_2 (\alpha_{1,2} \alpha_{2,1})^m \rho_1 \rho_2 \rho_1= \rho_1 \rho_2( \alpha_{1,2} \alpha_{2,1})^m \rho_2 \rho_1 \rho_2= (\alpha_{2,3} \alpha_{3,2})^m \rho_2 = (s_2 \rho_2)^{2m} \rho_2.$$ Iterating the process gives $\phi(s_i)= (s_i \rho_i)^{2m} \rho_i$ for all $i$. Thus,  $\phi = \phi_{2m}$ for some $m \in \mathbb{Z}$.
\end{itemize}
\medskip
Case(3): Let $n=m=6$ and $\phi  \lambda = \lambda  \nu$. Let $u_i = \nu(\tau_i)$ and $v_i = \lambda (u_i)$ for all $1 \le i \le 5$.  Then we have $\phi(\rho_i) = v_i$ for all $i$. We set $\phi(s_i) = a_i \lambda(w_i)$, where $a_i \in K_6$ and $w_i \in S_6$. We note that for $i=3, 4, 5$, we have $u_i w_1 = (\theta  \phi)(s_1 \rho_i) =  (\theta  \phi)(\rho_i s_1) = w_1 u_i$. This implies that $w_1$ belongs to the centraliser of $\langle u_3, u_4, u_5 \rangle$ in $S_6$, and hence $w_1 \in \langle u_1 \rangle$. Further, we notice that $\lambda(w_1) v_i = v_i \lambda(w_1)$ for $i=3, 4, 5$. The relation $s_1 \rho_i = \rho_i s_1$ gives $a_1 = v_i a_1 v_i^{-1}$ for $i=3, 4, 5$. It follows from Lemma \ref{For-Proof-of-Case-n=m=6} that $a_1=1$, and hence $\phi(s_1) = \lambda(w_1) \in \langle v_1 \rangle$. We now have two possibilities: either $\phi(s_1)=1$ or $\phi(s_1)= v_1$. If $\phi(s_1)=1$, then $\phi(s_i)=1$ for all $i$, and hence $\phi = \lambda  \nu  \theta$. If $\phi(s_1)= v_1$, then $\phi(s_i)= v_i$ for all $i$, and hence $\phi = \lambda  \nu  \pi$. This proves the theorem.
\end{proof}

We now build the set-up for determining $\Aut(VT_n)$.

\begin{proposition}\label{phim is not auto}
The following statements hold for each $n \geq 2$:
\begin{enumerate}
\item $\phi_m$ is not surjective for each even $m$.
\item $\phi_{m}$ is an automorphism of $VT_n$ for $m=1, -1$.
\item $\phi_m$ is injective but not surjective for each odd $m \neq 1, -1$.
\end{enumerate}
\end{proposition}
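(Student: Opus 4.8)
The plan is to combine the composition law for the maps $\phi_m$ with the semidirect product decomposition $VT_n = K_n \rtimes S_n$. First I would record the identity $\phi_a \circ \phi_b = \phi_{ab}$ for all $a,b \in \mathbb{Z}$: since $\phi_a(s_i\rho_i) = (s_i\rho_i)^a\rho_i \cdot \rho_i = (s_i\rho_i)^a$, one gets $\phi_a(\phi_b(s_i)) = (s_i\rho_i)^{ab}\rho_i = \phi_{ab}(s_i)$ and $\phi_a(\phi_b(\rho_i)) = \rho_i$, so the two homomorphisms agree on generators. This settles (2) at once: $\phi_1 = \id$, and $\phi_{-1} = \zeta$ satisfies $\zeta \circ \zeta = \phi_1 = \id$, so $\zeta$ is its own inverse and hence an automorphism of $VT_n$.

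For (1), i.e. non-surjectivity for even $m$, I would pass to the abelianization. The mixed relation \eqref{7} forces all $s_i$ to have the same image and, for $n \ge 3$, the braid relation \eqref{5} forces all $\rho_i$ to have the same image, so $VT_n^{\mathrm{ab}} \cong \mathbb{Z}_2 \times \mathbb{Z}_2$, generated by the classes $\bar s$ of $s_1$ and $\bar\rho$ of $\rho_1$ (for $n=2$ the group is $\mathbb{Z}_2 \ast \mathbb{Z}_2$, with the same abelianization). Since $(s_i\rho_i)^m\rho_i$ has class $m\bar s + (m+1)\bar\rho = \bar\rho$ when $m$ is even, $\phi_m$ induces on $VT_n^{\mathrm{ab}}$ the map $\bar s,\bar\rho \mapsto \bar\rho$, whose image is the proper subgroup $\langle \bar\rho\rangle$; as a surjection would induce a surjection on abelianizations, $\phi_m$ is not surjective.

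For odd $m = 2k+1$ I would first observe that $\phi_m$ respects $VT_n = K_n \rtimes S_n$: indeed $\phi_m(\rho_i) = \rho_i \in S_n$, while $\theta(\phi_m(s_i)) = \tau_i^{m+1} = 1$, so $\phi_m(s_i) = (s_i\rho_i)^{2k}s_i = (\alpha_{i,i+1}\alpha_{i+1,i})^k\alpha_{i,i+1} \in K_n$; hence $\phi_m = \psi_m \rtimes \id_{S_n}$ with $\psi_m := \phi_m|_{K_n}$, and by $S_n$-equivariance $\psi_m(\alpha_{i,j}) = (\alpha_{i,j}\alpha_{j,i})^k\alpha_{i,j} =: \beta_{i,j}$ for all $i\neq j$. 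Thus $\phi_m$ is injective (resp. surjective) iff $\psi_m$ is. For non-surjectivity, compose $\psi_m$ with the standard retraction $r : K_n \to K_n[\{\alpha_{1,2},\alpha_{2,1}\}]$ of the right-angled Coxeter group $K_n$ onto the parabolic $D := \langle\alpha_{1,2},\alpha_{2,1}\rangle \cong \mathbb{Z}_2 \ast \mathbb{Z}_2$; checking on generators gives $r\circ\psi_m = \mu_m \circ r$, where $\mu_m : D \to D$ sends $\alpha_{1,2}\mapsto \beta_{1,2}$, $\alpha_{2,1}\mapsto\beta_{2,1}$. A one-line computation gives $\beta_{1,2}\beta_{2,1} = (\alpha_{1,2}\alpha_{2,1})^m$, so $\mu_m(D) = \langle\beta_{1,2},\beta_{2,1}\rangle$ is infinite dihedral with rotation subgroup $\langle(\alpha_{1,2}\alpha_{2,1})^m\rangle$, a proper subgroup of the rotation subgroup $\langle\alpha_{1,2}\alpha_{2,1}\rangle$ of $D$ whenever $|m|\ge 2$; hence $\mu_m$ is not onto, and since $r$ is, neither is $\psi_m$, hence neither is $\phi_m$.

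Finally, the injectivity in (3) is the point I expect to be the main obstacle. Here I would use that the presentation in Theorem \ref{Ktn-right-angled-Coxeter} exhibits $K_n$ as the graph product of the dihedral vertex groups $D_{\{i,j\}} = \langle\alpha_{i,j},\alpha_{j,i}\rangle \cong \mathbb{Z}_2\ast\mathbb{Z}_2$, indexed by the $2$-subsets $\{i,j\}$ of $\{1,\dots,n\}$, with two vertex groups commuting exactly when the subsets are disjoint (that is, $\alpha_{i,j}\alpha_{k,l}=\alpha_{k,l}\alpha_{i,j}$ for distinct $i,j,k,l$). Under this identification $\psi_m$ is the graph-product endomorphism induced by the vertex endomorphisms $\mu_m : D_{\{i,j\}} \to D_{\{i,j\}}$, $\alpha_{i,j}\mapsto\beta_{i,j}$, each of which is injective since it is multiplication by $m\neq 0$ on the rotation subgroup $\mathbb{Z}$ and carries reflections to reflections. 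A graph-product endomorphism with injective vertex components is injective — by the normal form theorem for graph products, or equivalently because the subgroup of a graph product generated by subgroups of the vertex groups is canonically the graph product of those subgroups — so $\psi_m$, hence $\phi_m$, is injective. (Alternatively this step can be run by induction on $n$ using the amalgamated decomposition of Lemma \ref{amalgamated product of subgroups} and the normal form of Lemma \ref{Normal-Form-Amalgamated-Free-Product}.) Together with the previous paragraph this gives (3); the delicate part throughout is packaging the RACG structure of $K_n$ so that injectivity of the ``$m$-th power'' endomorphism on each $D_\infty$-block propagates to all of $K_n$.
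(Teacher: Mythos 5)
Your proposal is correct, and parts (1) and (2) essentially coincide with the paper's proof (the paper simply asserts these; your observation that $\phi_a\circ\phi_b=\phi_{ab}$, which gives $\phi_{-1}^2=\phi_1=\id$, is a clean way to justify (2), and the abelianisation argument for (1) is exactly what the paper has in mind). For part (3), however, your route genuinely differs from the paper's in its key technical step. The paper first reduces to positive odd $m=2t+1$ via the identity $\phi_{-m}=\zeta\phi_m$, computes $\phi_m(\alpha_{i,j})=\alpha_{i,j}(\alpha_{j,i}\alpha_{i,j})^t$, and then rests everything on a single combinatorial observation: a reduced word of length $k$ in the Coxeter generators of $K_n$ is sent to a reduced word of length $(2t+1)k$, from which injectivity (kernel elements have length $0$) and non-surjectivity ($\alpha_{1,2}$ has length $1$, not divisible by $2t+1$) both follow at once. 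You instead split the two conclusions: non-surjectivity via the retraction of $K_n$ onto the parabolic $\langle\alpha_{1,2},\alpha_{2,1}\rangle\cong D_\infty$ and an index computation in the translation subgroup, and injectivity by repackaging the presentation of Theorem \ref{Ktn-right-angled-Coxeter} as a graph product of infinite dihedral vertex groups $D_{\{i,j\}}$ on which $\psi_m$ acts by injective vertex endomorphisms. Both arguments ultimately rely on normal-form theory for right-angled Coxeter groups, but your version makes explicit the structural fact (a graph-product endomorphism with injective vertex components is injective, via Green's theorem that subgroups generated by subgroups of the vertex groups form the corresponding graph product) that the paper leaves implicit in its unproved ``reduced words map to reduced words'' observation; it also handles negative odd $m$ uniformly without the $\phi_{-m}=\zeta\phi_m$ reduction. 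The cost is that you invoke the graph-product machinery, which the paper does not otherwise use, though your suggested fallback via Lemmas \ref{amalgamated product of subgroups} and \ref{Normal-Form-Amalgamated-Free-Product} would keep the argument within the paper's toolkit.
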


\begin{proof}
Assertion (1) is immediate since the induced map on the abelianisation is not surjective for each even $m$. Assertion (2) is also clear since $\phi_1$ is the identity automorphism and $\phi_{-1}=\zeta$ is an order two automorphism of $VT_n$.
\medskip

For assertion (3), we first observe that $\phi_{-m} = \zeta \phi_m$ for each integer $m$. For, 
\begin{eqnarray*}
\zeta (\phi_m (s_i)) & =& \zeta((s_i \rho_i)^m \rho_i) = (\rho_i s_i \rho_i \rho_i)^m \rho_i = (\rho_is_i)^m \rho_i = (s_i \rho_i)^{-m} \rho_i \\
& =& \phi_{-m} (s_i).
\end{eqnarray*}
Thus, it is enough to take $m=2t+1$, where $t \ge 1$. It is easy to see that 
 $$\phi_m(\alpha_{i,i+1})= \alpha_{i,i+1} (\alpha_{i+1, i} \alpha_{i, i+1})^t \quad \textrm{and} \quad
\phi_m(\alpha_{i,j})= \alpha_{i,j} (\alpha_{j, i} \alpha_{i, j})^t$$
for all $1 \leq i \neq j \leq n$. This implies that $\phi_m(K_n) \subseteq K_n$. We observe that, if $w = \alpha_{i_1,j_1}\cdots \alpha_{i_k,j_k}$ is a reduced word of length $k$, then $\phi_m(w) = \alpha_{i_1,j_1} (\alpha_{j_1, i_1} \alpha_{i_1, j_1})^t \cdots \alpha_{i_k,j_k} (\alpha_{j_k, i_k} \alpha_{i_k, j_k})^t$ is a reduced word of length $(2t+1)k$.
\par
Let $xy\in VT_n$ with $x\in K_n$ and $y\in S_n$ such that  $1 = \phi_m(xy) = \phi_m(x)y$. It follows that $y=1$ and $\phi_m(x)=1$. The preceding observation implies that $x=1$, and hence $\phi_m$ is injective. Let $xy\in VT_n$ with $x\in K_n$ and $y\in S_n$ such that  $\alpha_{1,2} = \phi_m(xy) = \phi_m(x)y$. It follows that $y=1$ and $\phi_m(x)=\alpha_{1,2}$. Comparing lengths of  $x$ and $\phi_m(x)=\alpha_{1,2}$, the preceding observation leads to a contradiction. Hence, $\phi_m$ is not surjective, which proves assertion (3). 
\end{proof}

Recall that a group is called {\it co-Hopfian} if every injective endomorphism is surjective. Proposition \ref{phim is not auto}(3) yields the following result.

\begin{corollary}\label{vtn not cohopfian}
The groups $VT_n$ and $KT_n$ are not co-Hopfian for each $n \ge2$. 
\end{corollary}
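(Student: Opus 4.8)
The plan is to deduce both statements directly from Proposition \ref{phim is not auto}(3) and the computations already carried out in its proof; almost nothing new is needed, and the only point requiring attention is checking that the endomorphism witnessing non-co-Hopfianity of $KT_n$ really has domain and codomain $KT_n$.

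For $VT_n$, I would simply fix any odd integer $m\neq \pm 1$ (for instance $m=3$). By Proposition \ref{phim is not auto}(3), $\phi_m\colon VT_n\to VT_n$ is an injective endomorphism which is not surjective, and hence, by definition, $VT_n$ is not co-Hopfian for every $n\ge 2$.

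For $KT_n$ (recall $KT_n=K_n$), I would use the restriction of the same map. In the proof of Proposition \ref{phim is not auto}(3) it is shown that, writing $m=2t+1$, one has $\phi_m(\alpha_{i,j})=\alpha_{i,j}(\alpha_{j,i}\alpha_{i,j})^t$ for all $1\le i\neq j\le n$, so that $\phi_m(K_n)\subseteq K_n$; thus $\phi_m$ restricts to an endomorphism $\phi_m|_{K_n}\colon K_n\to K_n$. This restriction is injective because $\phi_m$ is injective on all of $VT_n$. It is not surjective because, again by that proof, $\phi_m$ sends a reduced word of length $k$ in the Coxeter generators to a reduced word of length $mk$; hence every element of $\phi_m(K_n)$ has reduced length divisible by $m\ge 3$, and in particular $\alpha_{1,2}$, of length $1$, lies outside the image. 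So $\phi_m|_{K_n}$ is an injective, non-surjective endomorphism of $KT_n$, proving that $KT_n$ is not co-Hopfian for every $n\ge 2$.

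The only (minor) subtlety, already resolved inside Proposition \ref{phim is not auto}, is that one must restrict to odd $m$: for even $m$ the map $\phi_m$ does not preserve $K_n$, since $\theta(\phi_m(s_i))=\tau_i^{m+1}=\tau_i\neq 1$. For very small $n$ one can also verify everything by hand --- e.g.\ $KT_2\cong\mathbb{Z}_2\ast\mathbb{Z}_2$, on which $\phi_m$ sends the generating rotation $r$ to $r^m$ and a reflection $\alpha_{1,2}$ to $r^t\alpha_{1,2}$, visibly injective but missing $\alpha_{1,2}$ --- but the argument above is uniform in $n$, so I do not expect any genuine obstacle here.
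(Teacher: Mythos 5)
Your proposal is correct and follows exactly the paper's route: the paper derives the corollary directly from Proposition \ref{phim is not auto}(3), and your treatment of $KT_n$ simply makes explicit the facts ($\phi_m(K_n)\subseteq K_n$ and the length-multiplication observation) already established inside the proof of that proposition. No gaps.
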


\begin{theorem}\label{aut out vtn}
For $n \ge 5$, $\Aut(VT_n) = \Inn(VT_n) \rtimes \langle \zeta \rangle \cong VT_n \rtimes \mathbb{Z}_2$ and $\Out(VT_n) \cong \langle \zeta \rangle \cong \mathbb{Z}_2$.
\end{theorem}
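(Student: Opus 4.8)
The plan is to feed an arbitrary automorphism of $VT_n$ into the classification of endomorphisms already obtained in Theorem \ref{homomorphisms-VTn-VTm} and then isolate which of the listed homomorphisms are genuinely bijective. First I would record two preliminary facts. Since $VT_n = K_n \rtimes S_n$ and $\C_{VT_n}(S_n) = 1$ by Corollary \ref{centraliser sn vtn kn same}, any central element of $VT_n$ lies in $\C_{VT_n}(S_n)$, so $\Z(VT_n) = 1$ and hence $\Inn(VT_n) \cong VT_n$. Also $\zeta = \phi_{-1}$ is an automorphism by Proposition \ref{phim is not auto}(2), and a direct check on generators ($\zeta(\rho_i)=\rho_i$, $\zeta(s_i)=\rho_i s_i \rho_i$, so $\zeta^2(s_i)=s_i$) gives $\zeta^2 = \id$, hence $\langle\zeta\rangle \cong \mathbb{Z}_2$.

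Next, let $\phi \in \Aut(VT_n)$. As $\phi$ is surjective and $VT_n$ is non-abelian for $n \ge 5$ (it surjects onto $S_n$), $\phi$ is not abelian, so Theorem \ref{homomorphisms-VTn-VTm} (applied with $m = n$, where ``conjugacy of homomorphisms'' is realised by an inner automorphism of $VT_n$) yields $x \in VT_n$ with $\phi = \widehat{x}\,\psi$, where $\psi$ is one of $\lambda\pi$, $\lambda\theta$, $\phi_t$, $\zeta\phi_t$ ($t \in \mathbb{Z}$), or, when $n = 6$, one of $\lambda\nu\theta$, $\lambda\nu\pi$. Then $\psi = \widehat{x}^{-1}\phi$ is again an automorphism, in particular surjective; but $\lambda\pi$, $\lambda\theta$, $\lambda\nu\theta$, $\lambda\nu\pi$ all have image contained in $\langle\rho_1,\dots,\rho_{n-1}\rangle \cong S_n$ and hence fail to be surjective, while $\phi_t$ and $\zeta\phi_t = \phi_{-t}$ are surjective only for $t = \pm 1$ by Proposition \ref{phim is not auto}. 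Thus $\psi \in \{\id, \zeta\}$, so $\phi = \widehat{x}$ or $\phi = \widehat{x}\,\zeta$, which shows $\Aut(VT_n) = \Inn(VT_n)\langle\zeta\rangle$.

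It remains to check this product is semidirect, that is, $\Inn(VT_n) \cap \langle\zeta\rangle = 1$, equivalently $\zeta \notin \Inn(VT_n)$. If $\zeta = \widehat{g}$ for some $g \in VT_n$, then $\zeta(\rho_i) = \rho_i$ forces $g \in \C_{VT_n}(\langle\rho_1,\dots,\rho_{n-1}\rangle) = \C_{VT_n}(S_n) = 1$, so $\zeta = \id$; but $\zeta(s_1) = \rho_1 s_1 \rho_1 = \alpha_{2,1} \ne \alpha_{1,2} = s_1$ in the right-angled Coxeter group $K_n$, a contradiction. Since $\Inn(VT_n)$ is normal in $\Aut(VT_n)$, we conclude $\Aut(VT_n) = \Inn(VT_n) \rtimes \langle\zeta\rangle \cong VT_n \rtimes \mathbb{Z}_2$, and consequently $\Out(VT_n) = \Aut(VT_n)/\Inn(VT_n) \cong \langle\zeta\rangle \cong \mathbb{Z}_2$. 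No genuinely hard step is involved once Theorem \ref{homomorphisms-VTn-VTm} is in hand; the only point requiring care is the bookkeeping that lets us convert the ``up to conjugacy'' classification into the coset decomposition $\Aut(VT_n) = \Inn(VT_n) \cup \Inn(VT_n)\zeta$, which is exactly where the equality of source and target of $\phi$ is used.
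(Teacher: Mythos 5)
Your proposal is correct and follows essentially the same route as the paper: feed an automorphism into Theorem \ref{homomorphisms-VTn-VTm}, discard the non-surjective possibilities via Proposition \ref{phim is not auto} and the finiteness of the images of $\lambda\pi$, $\lambda\theta$, $\lambda\nu\pi$, $\lambda\nu\theta$, and show $\zeta$ is non-inner using $\C_{VT_n}(S_n)=1$. The only (harmless) deviation is that you derive $\Z(VT_n)=1$ internally from Corollary \ref{centraliser sn vtn kn same} rather than citing it from the earlier paper, which is a slight self-containment bonus.
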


\begin{proof}
We first claim that the map $\zeta$ is a non-inner automorphism of $VT_n$ for each $n \ge 3$. Suppose that $\zeta$ is an inner automorphism of $VT_n$, say, $\zeta = \widehat{z}$ for some $z \in VT_n$. Since $\widehat{z}(\rho_i)=\zeta(\rho_i)=\rho_i$ for all $i$, we have $z \in \C_{VT_n}(S_n)$. By Corollary \ref{centraliser sn vtn kn same}, we have $z=1$. But, this is a contradiction since $\zeta$ is not the identity map.
\par
It follows from Theorem \ref{homomorphisms-VTn-VTm}(2) and Proposition \ref{phim is not auto} that any automorphism $\phi$ of $VT_n$ is of the form $\phi= \widehat{w} \zeta$ for some $w \in VT_n$. We already showed above that $\zeta$ is a non-inner automorphism. Further, it is known from \cite[Corollary 4.2]{NaikNandaSingh2020} that $\Z(VT_n)=1$. Hence, $\Aut(VT_n) = \Inn(VT_n) \rtimes \langle \zeta \rangle \cong VT_n \rtimes \mathbb{Z}_2$ and $\Out(VT_n) = \langle \zeta \rangle \cong \mathbb{Z}_2$.
\end{proof}

Note that $VT_2 \cong T_3 \cong \mathbb{Z}_2 * \mathbb{Z}_2$, and $\Aut(VT_2) \cong \Inn(VT_2) \rtimes \mathbb{Z}_2$ by \cite[Theorem 6.1(1)]{NaikNandaSingh1}. The groups $VT_3$ and $VT_4$ need to be dealt with separately with the latter appearing to be more challenging. We leave these cases for the interested readers.
\medskip

We conclude by tabulating the status of some properties of braid groups, virtual braid groups, twin groups, virtual twin groups and their pure subgroups. 
\medskip

\begin{small}
\begin{center}\label{table1}
\begin{tabular}{|c||c|c|c|c|}
  \hline
 $G$ & $B_n$ & $P_n$  & $VB_n$ & $VP_n$ \\
&Braid group& Pure braid group& Virtual braid group& Pure virtual braid group\\
&&&&\\
   \hline
      \hline
\textrm{Hopfian} & Yes for $n \ge2$ & Yes for $n \ge2$ & Yes for $n \ge5$ & Yes for $n=2$\\
&By linearity& By linearity&\cite{BellingeriParis2020}& $VP_2$ is free group of rank two\\
&&&&Unknown for $n \ge3$\\
  \hline
\textrm{Co-Hopfian}  & No for $n \ge2$ & No for $n \ge2$ & Yes for $n \ge5$ & No for $n=2$\\
&Easy to see \cite{MR2253663}& Easy to see\cite{MR2253663}& \cite{BellingeriParis2020}&Unknown for $n \ge3$\\
 &&&&\\
  \hline
   \textrm{$\Aut(G)$}  & Known for $n \ge2$& Known for $n \ge2$& Known  for $n \ge 5$&Known for $n =2$\\
&\cite{DyerGrossman}& \cite{BellMargalit} & \cite{BellingeriParis2020} &Unknown for $n \ge3$\\
&&&&\\
  \hline
\end{tabular}\\
\medskip
Table 1
\end{center}
\end{small}
\medskip

\begin{small}
\begin{center}\label{table2}
\begin{tabular}{|c||c|c|c|c|}
  \hline
 $G$ & $T_n$ & $PT_n$  & $VT_n$ & $PVT_n$\\
&Twin group& Pure twin group& Virtual twin group& Pure virtual twin group\\
&&&&\\
   \hline
     \hline
\textrm{Hopfian} & Yes for $n \ge2$ &  Yes for $n \ge2$ & Yes for $n \ge2$ &  Yes for $n \ge2$\\
&By linearity&By linearity&\cite{NaikNandaSingh2020}& $PVT_n$ is a RAAG \cite{NaikNandaSingh2020}\\
&&&&\\
  \hline
 \textrm{Co-Hopfian} & No for  $n \ge 3$ & No for $3 \le n \le 6$ & No for $n \ge2$ & No for $n\ge 2$\\
& \cite{NaikNandaSingh2} & $PT_n$ is free for $3 \le n \le 5$& This work&\cite{NaikNandaSingh2020}\\
&& and $PT_6$ is RAAG \cite{BarVesSin, GonGutiRoq, MostRoq}&&\\
&&Unknown for $n \ge 7$&&\\
&&&&\\
  \hline 
   \textrm{$\Aut(G)$}    & Known for $n \ge2$& Known for  $2 \le n \le 5$& Known for $n \ge 5$ & Known for $n \ge2$\\
& \cite{NaikNandaSingh1}& Unknown for $n \ge 6$&This work& \cite{NaikNandaSingh2020}\\
&&&&\\
  \hline
\end{tabular}\\
\medskip
Table 2
\end{center}
\end{small}
\medskip

\begin{ack}
Tushar Kanta Naik would like to acknowledge support from NBHM via grant 0204/3/2020/R\&D-II/2475. Neha Nanda thanks IISER Bhopal for the Institute Post-Doctoral Fellowship. Mahender Singh is supported by the Swarna Jayanti Fellowship grants DST/SJF/MSA-02/2018-19 and SB/SJF/2019-20/04.
\end{ack}

\medskip

\end{document}